\documentclass[a4paper]{article}

\topmargin-2cm \oddsidemargin 0cm \textwidth 16cm \textheight 25cm

\title{Numerical computation of the half Laplacian by means of a fast convolution algorithm\thanks{%
		Work partially supported by the research group grant IT1615-22 funded by the Basque Government, by the project PGC2018-094522-B-I00 funded by MICINN,
		and by the project PID2021-126813NB-I00 funded by MCIN/AEI/10.13039/501100011033 and by ``ERDF A way of making Europe''. Ivan Girona was also partially supported by the MICINN PhD grant PRE2019-090478.
}}

\author{Carlota M. Cuesta\footnotemark[2] \and Francisco de la Hoz\footnotemark[2] \and Ivan Girona\footnotemark[2]}

\usepackage{amsfonts,amssymb,amsmath,amsthm}
\usepackage{verbatim}
\usepackage{graphicx}

\newtheorem{theorem}{Theorem}[section]
\newtheorem{corollary}[theorem]{Corollary}
\newtheorem{lemma}[theorem]{Lemma}

\makeatletter
\newcommand{\biggg}{\bBigg@\thr@@}
\newcommand{\Biggg}{\bBigg@{3.5}}

\makeatother

\usepackage[T1]{fontenc}

\usepackage{listings}

\lstset{language=Matlab}

\DeclareMathOperator{\sgn}{sgn}

\DeclareMathOperator{\atanh}{\arg\tanh}
\DeclareMathOperator{\acot}{arccot}
\DeclareMathOperator{\erf}{erf}
\DeclareMathOperator{\Res}{Res}

\begin{document}

	\maketitle
	
	\renewcommand{\thefootnote}{\fnsymbol{footnote}}
	
	\footnotetext[2]{Department of Mathematics, Faculty of Science and Technology, University of the Basque Country UPV/EHU, Barrio Sarriena S/N, 48980 Leioa, Spain}
	
	\begin{abstract} 
		In this paper, we develop a fast and accurate pseudospectral method to approximate numerically the half Laplacian $(-\Delta)^{1/2}$ of a function on $\mathbb{R}$, which is equivalent to the Hilbert transform of the derivative of the function.
		
		The main ideas are as follows. Given a twice continuously differentiable bounded function $u\in\mathcal C_b^2(\mathbb{R})$, we apply the change of variable $x=L\cot(s)$, with $L>0$ and $s\in[0,\pi]$, which maps $\mathbb{R}$ into $[0,\pi]$, and denote $(-\Delta)_s^{1/2}u(x(s)) \equiv (-\Delta)^{1/2}u(x)$. Therefore, by performing a Fourier series expansion of $u(x(s))$, the problem is reduced to computing  $(-\Delta)_s^{1/2}e^{iks} \equiv (-\Delta)^{1/2}[(x + i)^k/(1+x^2)^{k/2}]$. On a previous work, we considered the case with $k$ even for the more general power $\alpha/2$, with $\alpha\in(0,2)$, so here we focus on the case with $k$ odd. More precisely, we express $(-\Delta)_s^{1/2}e^{iks}$ for $k$ odd in terms of the Gaussian hypergeometric function ${}_2F_1$, and also as a well-conditioned finite sum. Then, we use a fast convolution result, that enable us to compute very efficiently $\sum_{l = 0}^Ma_l(-\Delta)_s^{1/2}e^{i(2l+1)s}$, for extremely large values of $M$. This enables us to approximate $(-\Delta)_s^{1/2}u(x(s))$ in a fast and accurate way, especially when $u(x(s))$ is not periodic of period $\pi$. As an application, we simulate a fractional Fisher's equation having front solutions whose speed grows exponentially.
	\end{abstract}
	
	\textbf{Keywords:} half Laplacian, pseudospectral method, Gaussian hypergeometric functions, fast convolution
	
	\textbf{MSC2020:} 26A33, 33C05, 65T50
	
	\section{Introduction}
	
	In this paper, we develop a fast and spectrally accurate pseudospectral method to approximate numerically the operator $(-\Delta)^{1/2}$ on $\mathbb R$, known as the half Laplacian or the half fractional Laplacian. This operator appears in the modeling of relevant physical phenomena, such as the Peierls-Nabarro model describing the motion of dislocations in crystals (see, e.g., \cite{Monneau2012}), or the Benjamin-Ono equation \cite{Benjamin1975,Ono1975} in hydrodynamics. Among the different ways of defining the fractional Laplacian (see, e.g., \cite{kwasnicki}, where $-(-\Delta)^{\alpha/2}$ is considered, for $\alpha\in(0,2)$), we employ the definition in  \cite{cayamacuestadelahoz2020,cayamacuestadelahoz2021, cayamacuestadelahozgarciacervera2022} (in this paper, all the integrals that are not absolutely convergent must be understood in the principal value sense):
	\begin{equation}
		\label{e:fraclapl}
		(-\Delta)^{1/2}u(x) = \frac{1}{\pi}\int_{-\infty}^\infty\frac{u(x)-u(x+y)}{y^2}dy.
	\end{equation}
	On the other hand, whenever $u$ is a twice continuously differentiable bounded function, i.e., $u\in\mathcal C_b^2(\mathbb{R})$, we can express \eqref{e:fraclapl} as (see \cite{cayamacuestadelahoz2021}):
	\begin{equation}
		\label{e:fraclaplmathematica}
		(-\Delta)^{1/2}u(x) = \frac{1}{\pi}\int_{0}^{\infty}\frac{u_x(x-y) - u_x(x+y)}{y}dy ,
	\end{equation}
	or, equivalently,
	\begin{equation}
		\label{e:fraclapl2}
		(-\Delta)^{1/2}u(x) = \frac{1}{\pi}\int_{-\infty}^\infty \frac{u_{x}(y)}{x-y}dy,
	\end{equation}
	i.e., the fractional Laplacian is the Hilbert transform of the derivative of $u(x)$, i.e.,
	\begin{equation*}
		(-\Delta)^{1/2}u(x) \equiv \mathcal H(u_x(x)).
	\end{equation*}
	Although there are references specifically aimed at the numerical computation of the Hilbert transform (see, e.g., \cite{Olver2011} and \cite{weideman1995}), in this paper we have rather opted to study $(-\Delta)^{1/2}u(x)$ taking \cite{cayamacuestadelahoz2021} as our starting point. More precisely, after applying the mapping $x = L\cot(s)$, with $L > 0$, which maps the real line $\mathbb R$ into the finite interval $s\in[0, \pi]$ and hence avoids truncation of the domain, \eqref{e:fraclapl2} becomes
	\begin{equation}
		\label{e:fraclaps}
		(-\Delta)_s^{1/2}U(s) = -\frac{1}{L\pi}\int_{0}^\pi \frac{U_s(\eta)}{\cot(s) - \cot(\eta)}d\eta = \frac{\sin(s)}{L\pi}\int_{0}^\pi \frac{\sin(\eta)U_s(\eta)}{\sin(s - \eta)}d\eta,
	\end{equation}
	where
	\begin{equation*}
		U(s) \equiv u(x(s)), \qquad (-\Delta)_s^{1/2}U(s) \equiv (-\Delta)^{1/2}u(x).
	\end{equation*}
	Moreover, along this paper, we usually denote the derivatives by means of subscripts, so $u_{x}(x) \equiv - (\sin^{2}(s)/L)U_{s}(s)$ and $dx = -L\sin^{-2}(s)ds$.
	
	It is immediate to check that $(-\Delta)_s^{1/2}U(s)$ is periodic of period $\pi$ in $s$, which suggests expanding $U(s) \equiv u(x(s))$ in terms of $e^{iks}$, with $k\in\mathbb Z$. This was done in \cite{cayamacuestadelahoz2021}, getting the following result:
	\begin{equation}
		\label{e:tha1}
		(-\Delta)_s^{1/2}e^{iks} =
		\left\lbrace
		\begin{aligned}
			& \frac{|k|\sin^2(s)}{L}e^{iks}, & \text{$k$ even,}
			\\
			& {\frac{ik}{L\pi}}\left[\frac{2}{4 - k^2} - \sum_{n=-\infty}^\infty\frac{4\sgn(n)e^{i2ns}}{(2n - k)(4 - (2n - k)^2)}\right], & \text{$k$ odd,}
		\end{aligned}
		\right.
	\end{equation}
	where $(-\Delta)_s^{1/2}e^{iks} \equiv (-\Delta)^{1/2}e^{ik\acot(x/L)}$, and the multivalued real-valued function $\acot(x)$ is considered to take values in $(0,\pi)$. Hence, when $L = 1$, if $U(s) = e^{iks}$, we have the following equivalences:
	\begin{equation*}
		U(s) = e^{iks} \Longleftrightarrow u(x) = \left(\frac{x + i}{x - i}\right)^{k/2} = \left(\frac{x - i}{x + i}\right)^{-k/2} = \frac{(x + i)^k}{(1+x^2)^{k/2}} = \frac{(x - i)^{-k}}{(1+x^2)^{-k/2}}.
	\end{equation*}
	Note that, in general, from a numerical point of view, it is advisable to work with $s$ rather than with $x$. Indeed, $e^{iks}$, for $k$ integer, is unequivocally defined, whereas a direct numerical evaluation of  $((x + i) / (x - i))^{k/2}$ may yield unexpected results when $k$ is odd. Therefore, in the latter case, it is preferable to operate with $(x+i)^k/(1+x^2)^{k/2}$ (especially if $k$ is positive) or with $(x-i)^{-k}/(1+x^2)^{-k/2}$ (especially if $k$ is negative). Throughout this paper, we will always consider the identity $e^{iks}\equiv(x+i)^k/(1+x^2)^{k/2}$. Then,
	$$
	\frac{d}{dx}\frac{(x + i)^k}{(1+x^2)^{k/2}} = \frac{-i2k}{1+x^2}\frac{(x + i)^k}{(1+x^2)^{k/2}} = \frac{-i2k(x + i)^k}{(1+x^2)^{(1+k)/2}},
	$$
	and $(-\Delta)_s^{1/2}e^{iks}$ denotes
	\begin{align*}
		(-\Delta)^{1/2}_se^{iks} \equiv (-\Delta)^{1/2} \frac{(x + i)^k}{(1+x^2)^{k/2}} \equiv \mathcal H\left(\frac{-i2k(x + i)^k}{(1+x^2)^{(1+k)/2}}\right).
	\end{align*}
	Therefore, when $L = 1$, \eqref{e:tha1} can be expressed in terms of $x$ as follows:
	\begin{align}
		\label{e:tha1x}
		& (-\Delta)^{1/2} \frac{(x + i)^k}{(1+x^2)^{k/2}}
		\cr
		& \qquad = \left\lbrace
		\begin{aligned}
			& \frac{|k|(x + i)^k}{(1+x^2)^{k/2+1}}, & \text{$k$ even,}
			\\
			& \frac{ik}{\pi}\left[\frac{2}{4-k^2} - \sum_{n=-\infty}^\infty\frac{4\sgn(n)(x + i)^{2n}}{(2n-k)(4-(2n-k)^2)(1+x^2)^{n}}\right], & \text{$k$ odd,}
		\end{aligned}
		\right.
	\end{align}
	where we have used that $\sin^2(s) \equiv (1+x^2)^{-1}$. 
	
	As can be seen, \eqref{e:tha1} and \eqref{e:tha1x} pose no difficulty in the case with $k$ even. In this regard, note that the so-called complex Higgins functions $\lambda_k(x)$ (see, e.g., \cite{boyd1990}), which are the equivalent in terms of $x$ of $e^{i2ks}$, i.e.,
	$$
	\lambda_k(x)\equiv \left(\frac{x + i}{x - i}\right)^{k} \equiv e^{i2ks},
	$$
	form a complete orthogonal system in $\mathcal L^2(\mathbb R; 1/(\pi(1+x^2)))$. Therefore, bearing in mind that $1+x^2 = (1 + ix)(1 - ix)$, this suggests considering the so-called Christov functions $\mu_k(x)$ (see, e.g., \cite{boyd1990}):
	$$
	\mu_k(x)\equiv \frac{1}{ix+1}\left(\frac{x + i}{x - i}\right)^{k} = \frac{(ix - 1)^k}{(ix + 1)^{k+1}},
	$$
	which form a complete orthogonal system in $\mathcal L^2(\mathbb R)$ and are eigenfunctions of the Hilbert transform (see, e.g., \cite{weideman1995}); here, we must mention the closely related Malmquist-Takenaka functions \cite{Malmquist1926,Takenaka1925}, which differ from the Christov functions in a scaling and a constant factor. Note also that there are other orthogonal systems in $\mathcal L^2(\mathbb R)$ (see, e.g., \cite{IserlesWebb2018}, where the authors explore orthogonal systems in $\mathcal L^2(\mathbb R)$ which give rise to a real skewsymmetric, tridiagonal, irreducible differentiation matrix; and they conclude that the only such orthogonal system consisting of a polynomial sequence multiplied by a weight function is the Hermite functions).
	
	Given a function $u(x)$ that is of Schwartz class or analytic at infinity, the coefficients of its expansion in terms of the complex Higgins functions or the Christov functions decrease exponentially fast (see, e.g., \cite{boyd1990}). Observe also that $(x + i) / (x - i)$ maps the real line into the unit circle, so these expansions are reduced to Fourier series, by considering, e.g., $u(L\cot(s))$, as we do in this paper; in this regard, we refer to \cite{Trogdon2016}, where there is a complete study on the convergence of the interpolation of functions on the real axis with rational functions, by using a map from $\mathbb R$ to the unit circle.
	
	On the other hand, the computation of $(-\Delta)_s^{1/2}e^{iks}$, for $k$ odd, is much more involved than the $k$ even case, and will hence constitute one of the central aspects of this paper. Note that, when $k$ is odd, the main caveat of \eqref{e:tha1} is the presence of the infinite sum. Indeed, in the numerical implementation of \eqref{e:tha1} in \cite{cayamacuestadelahoz2021}, even if there was no truncation of the domain, it was still necessary to truncate the limits in the summation sign, to approximate numerically $(-\Delta)_s^{1/2}e^{iks}$ for $k$ odd. On the other hand, it was possible to consider simultaneously a large amount of odd values of $k$, by using a matrix representation, and, although the method was efficient even for moderately large amounts of frequencies and points, the size of the resulting dense matrix prevented considering extremely large problems (the largest value of $N$ in \cite{cayamacuestadelahoz2021} was $N = 8192$).
	
	The paper is organized as follows. In Section \ref{s:finite}, taking \eqref{e:tha1} as our starting point, we express, $(-\Delta)_s^{1/2}e^{iks}$ as a finite sum, for $k$ an odd integer. This constitutes one of the crucial points of this paper. Moreover, we show that $(-\Delta)_s^{1/2}e^{iks}$ can be written in a compact way by means of the Gaussian hypergeometric function ${}_2F_1$, which can also be reformulated as a finite sum. At this point, let us mention that, in \cite{cayamacuestadelahoz2020}, the general fractional Laplacian operator $(-\Delta)^{\alpha/2}$ applied to the complex Higgins functions $\lambda_k(x)$ (which is equivalent $(-\Delta)_s^{\alpha/2}e^{iks}$, for $k$ even) and to the Christov functions $\mu_k(x)$ was also expressed in terms of  ${}_2F_1$, for all $\alpha\in(0,2)$. This was possible, because $\lambda_k(x)$ and $\mu_k(x)$ are rational functions, and hence it is possible to apply a complex contour integration technique; however, when $k$ is odd, the denominator in $e^{iks}\equiv(x+i)^k/(1+x^2)^{k/2}$ is no longer a polynomial, which makes the analytical computation of $(-\Delta)_s^{1/2}e^{iks}$ much harder. In any case, the numerical computation of the instances of ${}_2F_1$ appearing in \cite{cayamacuestadelahoz2020} was challenging and required the use of multiple precision.
	
	Section \ref{s:convolution} is devoted to the other crucial point of this paper, namely the use of a fast convolution algorithm \cite{cayamacuestadelahozgarciacervera2022} to compute
	\begin{equation}
		\label{e:lincombfraclap}
		\sum_{l = 0}^Ma_l(-\Delta)_s^{1/2}e^{i(2l+1)s},
	\end{equation}
	for extremely large values of $M$ (in this paper, we consider values of $M$ of the order of $\mathcal O(10^7)$, but we have found that values of $M$ of the order of, e.g., $\mathcal O(10^8)$ also execute without problems in our computer). Note that the idea of a fast convolution method has been recently used in the context of the fractional Laplacian in \cite{cayamacuestadelahozgarciacervera2022}, for $\alpha\in(0,1)\cup(1,2)$, but in combination with a second-order modification of the midpoint rule, which yields an efficient method, although it is necessary to increase the amount of points, in order to achieve high accuracy. However, in this paper, we are able to compute \eqref{e:lincombfraclap} exactly, except for infinitesimally small rounding errors.
	
	The fast computation of \eqref{e:lincombfraclap} enables us to implement an efficient and accurate pseudospectral method to approximate numerically $(-\Delta)_s^{1/2}U(s)$, which is done in Section \ref{s:implementation}. We consider first the case with $U(s)$ periodic of period $\pi$, and afterward, the case with $U(s)$ not periodic of period $\pi$, where we put the main focus. In order to clarify the implementation details, we offer the actual Matlab \cite{matlab} codes.
	
	In Section \ref{s:experiments}, we test the proposed numerical method with a number of functions. In principle, an even extension of $U(s)$ at $s = \pi$ can always be used, to have at least a globally continuous function, i.e., $U(s)\in\mathcal C^0(\mathbb R)$, but we also consider briefly a trigonometric extension on $s\in[\pi,2\pi]$. Furthermore, by finding a function $v(x)$ whose half Laplacian is explicitly known, and such that $W(s) = U(s) - V(s)$, is periodic of period $\pi$, where $V(s)\equiv v(x(s))$, we study how to reduce the nonperiodic case to the periodic one.
	
	Finally, in Section \ref{s:fisher}, as a practical application, we simulate the following fractional Fisher's equation:
	\begin{equation}
		\label{e:fisher}
		u_t = -(-\Delta)^{1/2}u + f(u), \quad x\in\mathbb R,\ t \ge 0,
	\end{equation}
	for the so-called monostable nonlinearity $f(u) = u(1 - u)$. In the case of classical diffusion (i.e., with $\Delta u$ instead of $-(-\Delta)^{1/2}u$), this is a paradigm equation for pattern forming systems and for reaction-diffusion systems in general (see, e.g., the classical references \cite{Danilov1995,Kolmogorov1937}). On the other hand, a more general version of \eqref{e:fisher} has been proposed as a reaction-diffusion system with anomalous diffusion (see, e.g., \cite{MancinelliVergniVulpiani2002}, where $(-\Delta)^{\alpha/2}$, with $\alpha\in(0,2)$, is considered). Some fundamental analytical results can be found in, e.g., \cite{CabreSire2014}, with a more general non-local operator and in multiple dimensions. 
	
	The simulation of \eqref{e:fisher} is challenging, because it has front propagating solutions that, as for the monostable case, travel in one direction with a wave speed that increases exponentially in time (see \cite{CabreRoquejoffre2013,Engler2010}). Such solutions of \eqref{e:fisher} were simulated in \cite{cayamacuestadelahoz2021}, in the general case containing $(-\Delta)^{\alpha/2}$, with $\alpha\in(0,2)$. More precisely, the initial data corresponding to $\alpha = 1$ was
	\begin{equation}
		\label{e:u0}
		u_0(x) = \frac{1}{2} - \frac{x}{2\sqrt{1 + x^2}},
	\end{equation}
	which satisfies $u_0(x)\to 1$, as $x\to -\infty$, and $u_0(x)\to0$, as $x\to\infty$; in fact, $u_0(x)\sim x^{-2}/4$, as $x\to\infty$. This $u_0(x)$ gives rise to a front solution that travels to the right, invading the unstable state $u = 0$ soon after initiating the evolution, whereas $u(x, t)\to 1$, as $x\to-\infty$, for all $t > 0$; observe that $u=1$ is a stable constant state, that overtakes, in the form of a front, the unstable one $u=0$. In the current case, the analytical results \cite{cayamacuestadelahoz2021} predict that the front travels with a speed $c\sim e^t$ for $t$ large enough. Numerically, we can check this behavior, as follows: if, for a given $t$, $x_{0.5}(t)$ denotes the value of $x$, such that $u(x, t) = 0.5$, then we expect that $x_{0.5}'(t) \sim e^t$, as $t\to \infty$ (see \cite{CabreRoquejoffre2013}).
	
	Such analytical results were confirmed in \cite{cayamacuestadelahoz2021} in the general case. However, unlike, in \cite{cayamacuestadelahoz2021}, where only $N = 1024$ was considered, we are now able to consider much larger values of $N$, namely $N = 2^{20} = 1048576$, which would be completely unfeasible by considering the matrix-based numerical method described in \cite{cayamacuestadelahoz2021}. Indeed, being able to work with extremely large values of $N$ is particularly desirable, because it allows us to capture the speed of the traveling front with great accuracy.
	
	Additionally, in this paper, we have taken another initial data:
	\begin{equation}
		\label{e:u0b}
		u_0(x) = 10^{-4}\left(\frac{1}{2} - \frac{x}{2\sqrt{1 + x^2}}\right),
	\end{equation}
	i.e., $u_0(x)$ in \eqref{e:u0} multiplied by $10^{-4}$, which is now a non-symmetric perturbation of the unstable state $u=0$. In particular, $u_0(x)\to10^{-4}$, as $x\to-\infty$, and $u_0(x)\to0$, as $x\to\infty$. This initial data is particularly interesting, because, as with $u_0(x)$ in \eqref{e:u0}, it gives rise again to a front solution that travels to the right, and such that $x_{0.5}'(t) \sim e^t$, but, when $x\to-\infty$, the solution quickly tends to the stable state $u = 1$, as expected.  
	
	Unlike in the examples in Section \ref{s:experiments}, no analytic expression of $u(x, t)$ is known, except for $t = 0$. In fact, when taking $u_0(x)$ in \eqref{e:u0}, we can assume $\lim_{x\to\infty}u(x, t) = 0$, $\lim_{x\to-\infty}u(x, t) = 1$, for all $t > 0$; but when taking $u_0(x)$ in \eqref{e:u0b}, at least for $0 < t \ll 1$, we cannot assume that $\lim_{x\to-\infty}u(x, t) = 1$. Therefore, in our opinion, the safest (and simplest) option here is to consider an even extension of $U(s, t)$ at $s = \pi$, which, produces very good results. In fact, in evolution problems like \eqref{e:fisher}, considering another type of extension, or trying to find a function $V(s, t)$ such that $W(s, t) = U(s, t) - V(s, t)$ is periodic of period $\pi$ and regular enough, would require a careful analytical study of $u(x, t)$ and of its spatial derivatives, which would make the numerical implementation much more complex.
	
	The paper is concluded by an appendix showing how to compute analytically $(-\Delta)^{1/2}(1+x^4)^{-1}$ by means of complex variable techniques.
	
	All the simulations have been run in an Apple MacBook Pro (13-inch, 2020, 2.3 GHz Quad-Core Intel Core i7, 32 GB).
	
	\section{Expressing $(-\Delta)_s^{1/2}e^{iks}$ for $k$ odd as a finite sum} \label{s:finite}
	We start this section by recalling some well-known concepts. Given $z\in\mathbb{C}$, the Pochhammer symbol is defined as
	$$
	(z)_n =
	\begin{cases}
		z(z+1)\ldots(z+n-1), & n\in\mathbb N,
		\\
		1, & n = 0.
	\end{cases}
	$$
	Note that, if $z$ is zero or a negative integer, and $n > |z|$, then $(z)_n = 0$. On the other hand, whenever $z$ is not zero nor a negative integer, an equivalent definition is
	\begin{equation}
		\label{relposchgamma}
		(z)_n = \frac{\Gamma(z+n)}{\Gamma(z)},
	\end{equation}
	and, in particular, whenever $z\in\mathbb N$,
	$$
	(z)_n = \frac{(z+n-1)!}{(z-1)!}.
	$$
	The Pochhammer symbol is needed to define the Gaussian hypergeometric function ${}_2F_1$:
	\begin{equation}
		\label{e:2F1}
		{}_{2}F_1(a,b;c;z)=\sum_{n=0}^{\infty}\frac{(a)_n(b)_n}{(c)_n}\frac{z^n}{n!},
	\end{equation}
	for $a,b,c,z\in\mathbb{C}$. This series (see, e.g., \cite[p. 5]{Slater1966}) is absolutely convergent when $|z| < 1$, and divergent when $|z|>1$. When $z = 1$, the series converges when $\Re(c - a - b)> 0$, and diverges when $\Re(c - a - b) \le 0$. When $|z| = 1$, but $z\not=1$, which is the case we are interested in, the series is absolutely convergent when $\Re(c-a-b)> 0$, convergent but not absolutely convergent when $-1 < \Re(c-a-b)\le0$, and divergent when $\Re(c - a - b) < -1$. Finally, when $|z| = 1$, $z\not=1$, and $\Re(c-a-b) = -1$, the series is convergent but not absolutely convergent if $\Re(a + b) > \Re(ab)$, and divergent, if $\Re(a + b) \le \Re(ab)$. For more details on the Gaussian hypergeometric functions, see, e.g., \cite{Slater1966}.
	
	We also need the following auxiliary lemma.
	\begin{lemma}\label{lemma:atanh}
		Let $s\in(0,\pi)$. Then
		\begin{equation}
			\label{e:atanh}
			\atanh(e^{\pm is})=\sum_{n=0}^{\infty}\frac{e^{\pm i(2n+1)s}}{2n+1}=\frac{1}{2}\ln\left(\cot\left(\dfrac{s}{2}\right)\right)\pm i\frac{\pi}{4}.
		\end{equation}
	\end{lemma}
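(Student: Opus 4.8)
The plan is to establish the two equalities in \eqref{e:atanh} separately, treating the series identity and the closed form independently and then chaining them. First I would prove the series representation $\atanh(w)=\sum_{n=0}^\infty w^{2n+1}/(2n+1)$ for the argument $w=e^{\pm is}$. Recall that for complex $w$ with $|w|<1$ one has the standard Maclaurin expansion $\atanh(w)=\tfrac12\ln\!\big((1+w)/(1-w)\big)=\sum_{n=0}^\infty w^{2n+1}/(2n+1)$, obtained by integrating the geometric series $\sum_{n\ge0}w^{2n}=1/(1-w^2)$ term by term. The point $w=e^{\pm is}$ lies on the boundary $|w|=1$, so convergence is not automatic; however, for $s\in(0,\pi)$ we have $w\neq\pm1$, and the series $\sum e^{\pm is(2n+1)}/(2n+1)$ converges by Dirichlet's test (the partial sums of $e^{\pm is(2n+1)}$ are bounded since $e^{\pm 2is}\neq1$, and $1/(2n+1)$ decreases to $0$). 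By Abel's theorem on power series, the boundary value of the series equals the radial limit of $\atanh(w)$ as $w\to e^{\pm is}$ from inside the disk, which is exactly $\tfrac12\ln\!\big((1+e^{\pm is})/(1-e^{\pm is})\big)$ with the principal branch; this justifies both the first equality and identifies the sum with a specific logarithm.

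Next I would compute that logarithm in closed form. Writing $e^{\pm is}=\cos s\pm i\sin s$ and using the half-angle identities $1+e^{\pm is}=2\cos(s/2)e^{\pm is/2}$ and $1-e^{\pm is}=-2i\sin(s/2)e^{\pm is/2}=2\sin(s/2)e^{\mp i\pi/2}e^{\pm is/2}$, the ratio becomes
\begin{equation*}
\frac{1+e^{\pm is}}{1-e^{\pm is}}=\frac{\cos(s/2)}{\sin(s/2)}\,e^{\pm i\pi/2}=\cot\!\Big(\frac{s}{2}\Big)\,e^{\pm i\pi/2}.
\end{equation*}
For $s\in(0,\pi)$ we have $s/2\in(0,\pi/2)$, so $\cot(s/2)>0$; hence this complex number has modulus $\cot(s/2)$ and argument $\pm\pi/2$, both lying safely in the principal range. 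Taking $\tfrac12\ln(\cdot)$ then yields $\tfrac12\ln(\cot(s/2))\pm i\pi/4$, which is the right-hand side of \eqref{e:atanh}.

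The main obstacle, such as it is, is bookkeeping with branches: one must make sure the principal logarithm is the correct branch to match the analytic continuation of $\atanh$ from the unit disk, and that the half-angle factorizations are consistent in sign on the whole interval $s\in(0,\pi)$ rather than just near $s=\pi/2$. A clean way to sidestep the Abel-theorem machinery entirely, if one prefers a more elementary argument, is to split $\atanh(e^{is})$ into real and imaginary parts directly: the real part $\sum_{n\ge0}\cos((2n+1)s)/(2n+1)$ is the classical Fourier series equal to $\tfrac12\ln(\cot(s/2))$ on $(0,\pi)$ (it is, up to scaling, the Fourier series of a square wave / the function $\tfrac14\ln\cot^2(s/2)$), and the imaginary part $\sum_{n\ge0}\sin((2n+1)s)/(2n+1)$ is the even more classical Fourier series equal to $\pi/4$ on $(0,\pi)$; the $-is$ case follows by conjugation. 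Either route gives the result; I would present the $\atanh$/Abel version as the primary proof since it is shortest, and the calculation above with the half-angle formulas is the only genuinely computational step.
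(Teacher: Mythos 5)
Your proposal is correct and follows essentially the same route as the paper: the Maclaurin series for $\atanh$, Dirichlet's test for convergence on the unit circle away from $\pm1$, and the half-angle computation of $\tfrac12\ln\bigl((1+e^{\pm is})/(1-e^{\pm is})\bigr)$ as $\tfrac12\ln(\cot(s/2))\pm i\pi/4$. In fact you are slightly more careful than the paper, which simply evaluates the series at $z=e^{\pm is}$, whereas you invoke Abel's theorem to justify that the boundary sum equals the boundary value of $\atanh$.
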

	
	\begin{proof} We compute the series expansion of $\atanh(z)$,
		$$
		(\atanh)'(z) = \frac{1}{1 - z^2} = \sum_{n = 0}^{\infty}z^{2n} \Longrightarrow \atanh(z) = \sum_{n=0}^{\infty}\frac{z^{2n+1}}{2n+1},
		$$
		which is absolutely convergent when $|z| < 1$. Moreover, by using, e.g., the well-known Dirichlet criterion, it follows that it is also convergent when $|z| = 1$, except for the cases $z = 1$ and $z = -1$. Therefore, evaluating it at $z = e^{\pm is}$, with $s\in(0,\pi)$,
		\begin{equation}
			\label{e:atanh1}
			\atanh(e^{\pm is}) = \sum_{n=0}^{\infty}\frac{e^{\pm i(2n+1)s}}{2n+1}.
		\end{equation}
		On the other hand,
		$$
		\atanh(z) = y \Longleftrightarrow z = \tanh(y) = \frac{e^y - e^{-y}}{e^y + e^{-y}},
		$$
		so, solving for $y$,
		$$
		y = \atanh(z) = \frac12\ln\left(\frac{1 + z}{1 - z}\right).
		$$
		Hence, evaluating at $z = e^{\pm is}$, for $s\in(0,\pi)$,
		\begin{equation}
			\label{e:atanh2}
			\atanh(e^{\pm is}) = \frac12\ln\left(\frac{1 + e^{\pm is}}{1 - e^{\pm is}}\right) = \frac{1}{2}\ln\left(\cot\left(\dfrac{s}{2}\right)\right)\pm i\frac{\pi}{4},
		\end{equation}
		where we have used that, when $s\in(0,\pi)$,
		\begin{align*}
			\frac{1 + e^{\pm is}}{1 - e^{\pm is}} = \frac{e^{\mp is/2} + e^{\pm is/2}}{e^{\mp is/2} - e^{\pm is/2}} = 0\pm i\cot\left(\dfrac{s}{2}\right) \Longrightarrow
			\left\{\begin{aligned}
				& \left|\frac{1 + e^{\pm is}}{1 - e^{\pm is}}\right|  = \cot\left(\dfrac{s}{2}\right),
				\cr
				& \arg\left(\frac{1 + e^{\pm is}}{1 - e^{\pm is}}\right) = \pm\frac\pi2.
			\end{aligned}\right.
		\end{align*}
		Putting \eqref{e:atanh1} and \eqref{e:atanh2} together, \eqref{e:atanh} follows.
	\end{proof}
	
	Lemma \ref{lemma:atanh} is used to prove the following result.
	\begin{lemma} Let $k$ be an odd integer, and $s\in(0,\pi)$. Then,
		\begin{equation}
			\label{e:suminfinfek2n0}
			\sum_{n=-\infty}^{\infty}\frac{e^{i2ns}}{(2n-k-2)(2n-k)(2n-k+2)} = -\frac{i\pi}4 e^{iks}\sin^2(s),
		\end{equation}
		and
		\begin{align}
			\label{e:sum0infek2n0}
			& \sum_{n=0}^{\infty}\frac{e^{i2ns}}{(2n-k-2)(2n-k)(2n-k+2)} = \frac{\chi_{(-\infty,-1]}(k)}{k(4-k^2)} 
			\cr
			& \qquad  - e^{iks}\biggg[\sum_{n=0}^{(|k|-1)/2}\frac{e^{-i\sgn(k)(2n+1)s}}{(2n-1)(2n+1)(2n+3)}
			\cr
			& \qquad \qquad \qquad + \frac14\cos(s) + \frac{i\pi}8\sin^2(s) + \frac14\sin^2(s)\ln\left(\cot\left(\frac s2\right)\right)\biggg],
		\end{align}
		where the characteristic function $\chi_{(-\infty,-1]}(k)$ equals $1$ when $k\le -1$, and $0$ when $k > -1$.
		
	\end{lemma}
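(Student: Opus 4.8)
The plan is to handle the two identities in turn, using throughout the partial fraction decomposition
\[
\frac{1}{(m-2)m(m+2)}=\frac{1/8}{m-2}-\frac{1/4}{m}+\frac{1/8}{m+2}
\]
together with Lemma~\ref{lemma:atanh}. For \eqref{e:suminfinfek2n0}, since $k$ is odd, the substitution $m=2n-k$ (which runs over the odd integers as $n$ ranges over $\mathbb Z$) turns the absolutely convergent bilateral sum into $e^{iks}\sum_{m\text{ odd}}e^{ims}/[(m-2)m(m+2)]$. Splitting this into the three pieces above is legitimate because each of them converges as the sum of its $m>0$ and $m<0$ parts, which are of the type covered by \eqref{e:atanh}; indeed $\sum_{m>0,\ m\text{ odd}}e^{ims}/m=\atanh(e^{is})$ and $\sum_{m<0,\ m\text{ odd}}e^{ims}/m=-\atanh(e^{-is})$, so $\sum_{m\text{ odd}}e^{ims}/m=\atanh(e^{is})-\atanh(e^{-is})=i\pi/2$ by \eqref{e:atanh}, while shifting $m\mapsto m\mp2$ in the remaining two pieces turns them into $e^{\pm 2is}\,i\pi/2$. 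Collecting the three contributions, $e^{iks}\,\frac{i\pi}{2}\bigl(\tfrac{e^{2is}}8-\tfrac14+\tfrac{e^{-2is}}8\bigr)=e^{iks}\,\frac{i\pi}{8}(\cos 2s-1)=-\tfrac{i\pi}4 e^{iks}\sin^2(s)$, which is \eqref{e:suminfinfek2n0}.

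For \eqref{e:sum0infek2n0}, the idea is to write $\sum_{n=0}^{\infty}=\sum_{n=-\infty}^{\infty}-\sum_{n=-\infty}^{-1}$, use \eqref{e:suminfinfek2n0} for the first term, and treat the tail by $n\mapsto-n$; pulling the $(-1)^3$ out of the three denominator factors gives
\[
\sum_{n=-\infty}^{-1}\frac{e^{i2ns}}{(2n-k-2)(2n-k)(2n-k+2)}=-\sum_{j=1}^{\infty}\frac{e^{-i2js}}{(2j+k-2)(2j+k)(2j+k+2)}.
\]
Applying the same partial fraction decomposition to the right-hand side, reindexing $q=2j+k$ (which is odd) to pull out a global $e^{iks}$, and then shifting $q\mapsto q\mp2$ to normalize each denominator, each of the three pieces becomes a constant times $e^{i\delta s}$, $\delta\in\{-2,0,2\}$, times a tail $\sum_{p\ge p_0,\ p\text{ odd}}e^{-ips}/p$ with $p_0\in\{k,k+2,k+4\}$; by Lemma~\ref{lemma:atanh} each such tail equals $\atanh(e^{-is})$ plus an explicit finite sum — a subtracted block of low positive-index terms $e^{-ips}/p$ when $k>0$, and an added block of negative-index terms $-e^{i(2n+1)s}/(2n+1)$ when $k\le-1$ (where the progression of $p$ straddles $0$). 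This is the mechanism producing the finite sum $\sum_{n=0}^{(|k|-1)/2}e^{-i\sgn(k)(2n+1)s}/[(2n-1)(2n+1)(2n+3)]$ in \eqref{e:sum0infek2n0}, whose exponent carries exactly the sign $-\sgn(k)$.

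It remains to reassemble. The three $\atanh(e^{-is})$ contributions come with combined weight $\tfrac{e^{2is}}8-\tfrac14+\tfrac{e^{-2is}}8=-\tfrac{\sin^2(s)}2$, and by \eqref{e:atanh}, $-\tfrac{\sin^2(s)}2\atanh(e^{-is})=-\tfrac{\sin^2(s)}4\ln\cot(s/2)+\tfrac{i\pi}8\sin^2(s)$; adding the $-\tfrac{i\pi}4 e^{iks}\sin^2(s)$ from \eqref{e:suminfinfek2n0} yields precisely the $\tfrac{i\pi}8\sin^2(s)+\tfrac14\sin^2(s)\ln\cot(s/2)$ terms inside the bracket, once the global factor $-e^{iks}$ is taken out. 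The leftover finite terms — from the index shifts and from the finite corrections above — form a trigonometric polynomial in $e^{\pm is}$ which, using $e^{is}\cos s=(1+e^{2is})/2$, reorganizes into the stated finite sum plus the term $\tfrac14\cos(s)$; finally, writing that finite sum uniformly up to $n=(|k|-1)/2$ for both signs of $k$ forces, when $k\le-1$, a compensation for its top term, whose contribution $-1/[k(4-k^2)]$ (after the factor $-e^{iks}$) is exactly cancelled by $\chi_{(-\infty,-1]}(k)/[k(4-k^2)]$. I expect the only genuine difficulty to be this bookkeeping — tracking which finite terms are subtracted off in each of the three reindexed tails, and checking that the sign-sensitive case $k\le-1$ collapses to a single clean correction — so I would verify the whole pattern on $k=\pm1$ and $k=\pm3$ before writing the general argument.
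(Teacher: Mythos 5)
Your proposal is correct, and for \eqref{e:suminfinfek2n0} it is essentially the paper's own argument: partial fractions for $1/[(m-2)m(m+2)]$ plus Lemma \ref{lemma:atanh}, giving the weight $\tfrac{e^{2is}}8-\tfrac14+\tfrac{e^{-2is}}8=-\tfrac12\sin^2(s)$ on $\atanh(e^{is})-\atanh(e^{-is})=i\pi/2$. For \eqref{e:sum0infek2n0}, however, you take a genuinely different decomposition: you write $\sum_{n\ge0}=\sum_{n\in\mathbb Z}-\sum_{n\le-1}$, reflect the negative tail via $n\mapsto-n$, and evaluate it through three shifted copies of $\atanh(e^{-is})$, recovering the finite sum, the $\tfrac14\cos(s)$ term and the $\chi_{(-\infty,-1]}(k)$ correction from the misalignment of the three reindexed tails. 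The paper instead shifts the index of $\sum_{n\ge0}$ directly, so that it splits into a finite $k$-dependent prefix $S_k$ (compactified at once into the stated finite sum plus the $\chi$ term) and a single $k$-independent series $\sum_{n\ge0}e^{i(2n+1)s}/[(2n-1)(2n+1)(2n+3)]$ evaluated with $\atanh(e^{is})$; this isolates all the sign-sensitive bookkeeping in one elementary finite block, whereas your route spreads it over three conditionally convergent tails and therefore needs the collection step you only sketch. That step does go through: writing each tail as $\atanh(e^{-is})$ plus a finite discrepancy, shifting the outer factors $e^{\pm2is}$ back recombines the three partial fractions into $-1/[(p-2)p(p+2)]$ over the odd $p$ between $1$ and $k$ (resp.\ the corresponding range for $k\le-1$), the two leftover boundary terms give exactly $-\tfrac14\cos(s)$, and for $k\le-1$ the artificially completed top term of the finite sum is cancelled by $\chi_{(-\infty,-1]}(k)/[k(4-k^2)]$, exactly as you predict; a check at $k=\pm1,\pm3$ confirms the pattern. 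So nothing is missing conceptually; what separates your sketch from a complete proof is only writing out this routine collection uniformly in $k$, a task the paper's $S_k$ device is designed to make shorter.
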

	
	\begin{proof} The proof of \eqref{e:suminfinfek2n0} is straightforward:
		\begin{align}
			\label{e:suminfinfek2n}
			& \sum_{n=-\infty}^{\infty}\frac{e^{i2ns}}{(2n-k-2)(2n-k)(2n-k+2)} = \sum_{n=-\infty}^{\infty}\frac{e^{i(2n+k+1)s}}{(2n-1)(2n+1)(2n+3)}
			\cr
			& \qquad = e^{iks}\sum_{n=-\infty}^{\infty}\left[\frac{e^{i(2n+1)s}}{8(2n - 1)} - \frac{e^{i(2n+1)s}}{4(2n + 1)} + \frac{e^{i(2n+1)s}}{8(2n + 3)}\right]
			\cr
			& \qquad = e^{iks}\frac{e^{i2s} - 2 + e^{-i2s}}8\sum_{n=-\infty}^{\infty}\frac{e^{i(2n+1)s}}{2n + 1} 
			\cr
			& \qquad = e^{iks}\frac{(e^{is} - e^{-is})^2}8\left[\sum_{n=0}^{\infty}\frac{e^{i(2n+1)s}}{2n + 1} + \sum_{n=-\infty}^{-1}\frac{e^{i(2n+1)s}}{2n + 1}\right]
			\cr
			& \qquad = -e^{iks}\frac{\sin^2(s)}2\left[\sum_{n=0}^{\infty}\frac{e^{i(2n+1)s}}{2n + 1} - \sum_{n=0}^{\infty}\frac{e^{-i(2n+1)s}}{2n + 1}\right] = -\frac{i\pi}4\sin^2(s)e^{iks},
		\end{align}
		where we have applied partial fraction decomposition, and have used \eqref{e:atanh} in the last line. In fact, the last line can be also simplified by noting that it is precisely the Fourier series of a piecewise constant function:
		\begin{align*}
			\sum_{n=0}^{\infty}\frac{e^{i(2n+1)s}}{2n + 1} - \sum_{n=0}^{\infty}\frac{e^{-i(2n+1)s}}{2n + 1} & = 2\sum_{n=0}^{\infty}\frac{\sin((2n+1)s)}{2n + 1}
			\cr
			& =
			\left\{
			\begin{aligned}
				& \frac{\pi}{2}, & & s\in(0,\pi) + 2l\pi,\ \forall l\in\mathbb Z,
				\\
				& {-\frac{\pi}{2}}, & & s\in(-\pi,0) + 2l\pi,\ \forall l\in\mathbb Z.
			\end{aligned}
			\right.
		\end{align*}
		On the other hand, in order to prove \eqref{e:sum0infek2n0} we observe that
		\begin{align}
			\label{e:Sksum0}
			\sum_{n=0}^{\infty}\frac{e^{i2ns}}{(2n-k-2)(2n-k)(2n-k+2)} & = \sum_{n=(-k-1)/2}^{\infty}\frac{e^{i(2n+k+1)s}}{(2n-1)(2n+1)(2n+3)}
			\cr
			& = S_k + \sum_{n=0}^{\infty}\frac{e^{i(2n+k+1)s}}{(2n-1)(2n+1)(2n+3)},
		\end{align}
		where $S_k$ is given by 
		\begin{equation*}
			S_k = 
			\left\{
			\begin{aligned}
				& \sum_{n=(-k-1)/2}^{-1}\frac{e^{i(2n+k+1)s}}{(2n-1)(2n+1)(2n+3)}, & & k \ge 1,
				\cr
				& 0, & & k = -1;
				\cr
				& {-\sum_{n=0}^{(-k-3)/2}}\frac{e^{i(2n+k+1)s}}{(2n-1)(2n+1)(2n+3)}, & & k \le -3.
			\end{aligned}
			\right.
		\end{equation*}
		However, it is possible to express $S_k$ in a more compact way. Indeed, when $k \ge 1$, it can be rewritten as
		$$
		S_k = -e^{iks}\sum_{n=0}^{(k-1)/2}\frac{e^{-i(2n+1)s}}{(2n-1)(2n+1)(2n+3)},
		$$
		and, when $k = -1$ or $k \le -3$, both cases can be considered together:
		$$
		S_k = \frac{1}{k(4-k^2)} - e^{iks}\sum_{n=0}^{(-k-1)/2}\frac{e^{i(2n+1)s}}{(2n-1)(2n+1)(2n+3)}.
		$$
		Therefore, for any odd integer $k$, we get that
		\begin{equation}
			\label{e:Sk}
			S_k = \frac{\chi_{(-\infty,-1]}(k)}{k(4-k^2)} - e^{iks}\sum_{n=0}^{(|k|-1)/2}\frac{e^{-i\sgn(k)(2n+1)s}}{(2n-1)(2n+1)(2n+3)}.
		\end{equation}
		Finally, the infinite sum in \eqref{e:Sksum0} can be computed explicitly. Reasoning as in \eqref{e:suminfinfek2n}:
		\begin{align*}
			& \sum_{n=0}^{\infty}\frac{e^{i(2n+k+1)s}}{(2n-1)(2n+1)(2n+3)} = e^{iks}\sum_{n=0}^{\infty}\left[\frac{e^{i(2n+1)s}}{8(2n - 1)} - \frac{e^{i(2n+1)s}}{4(2n + 1)} + \frac{e^{i(2n+1)s}}{8(2n + 3)}\right]
			\cr
			& \qquad = e^{iks}\left[e^{i2s}\sum_{n=-1}^{\infty}\frac{e^{i(2n+1)s}}{8(2n + 1)} - \sum_{n=0}^{\infty}\frac{e^{i(2n+1)s}}{4(2n + 1)} + e^{-i2s}\sum_{n=1}^{\infty}\frac{e^{i(2n+1)s}}{8(2n + 1)}\right]
			\cr
			& \qquad = e^{iks}\left[-\frac{e^{is} + e^{-is}}{8} + \frac{(e^{is} - e^{-is})^2}{8}\sum_{n=0}^{\infty}\frac{e^{i(2n+1)s}}{2n + 1}\right]
			\cr
			& \qquad = -e^{iks}\left[\frac14\cos(s) + \frac{i\pi}8\sin^2(s) + \frac14\sin^2(s)\ln\left(\cot\left(\frac s2\right)\right)\right],
		\end{align*}
		where we have used again \eqref{e:atanh}. Introducing this expression together with \eqref{e:Sk} into \eqref{e:Sksum0}, we get \eqref{e:sum0infek2n0}, which concludes the proof.
	\end{proof}
	
	\begin{corollary}
		Let $k$ be an odd integer, and $s\in(0,\pi)$. Then,
		\begin{align}
			\label{e:2F1finite}
			& {}_2F_1(1,-k/2-1;-k/2+2;e^{i2s}) = \chi_{(-\infty,-1]}(k) 
			\cr
			& \qquad \qquad  - k(4-k^2)e^{iks}\biggg[\sum_{n=0}^{(|k|-1)/2}\frac{e^{-i\sgn(k)(2n+1)s}}{(2n-1)(2n+1)(2n+3)}
			\cr
			& \qquad \qquad \qquad + \frac14\cos(s) + \frac{i\pi}8\sin^2(s) + \frac14\sin^2(s)\ln\left(\cot\left(\frac s2\right)\right)\biggg].
		\end{align}
	\end{corollary}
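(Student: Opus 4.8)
The plan is to recognize that \eqref{e:2F1finite} is just \eqref{e:sum0infek2n0} multiplied on both sides by the quantity $k(4-k^2)$, provided we first check the identity
\[
{}_2F_1\bigl(1,-k/2-1;-k/2+2;e^{i2s}\bigr) = k(4-k^2)\sum_{n=0}^{\infty}\frac{e^{i2ns}}{(2n-k-2)(2n-k)(2n-k+2)}.
\]
Thus the argument reduces to expanding the Gaussian hypergeometric series on the left through its definition \eqref{e:2F1} and matching it with the series on the left-hand side of \eqref{e:sum0infek2n0}.

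To prove this identity, I would put $a=1$, $b=-k/2-1$, $c=-k/2+2$ and $z=e^{i2s}$ in \eqref{e:2F1}; since $(1)_n=n!$, the factor $1/n!$ cancels, leaving $\sum_{n=0}^{\infty}\frac{(b)_n}{(c)_n}z^n$. The elementary observation is that $c=b+3$, so the ratio of Pochhammer symbols telescopes: $(b)_n/(b+3)_n=b(b+1)(b+2)/\bigl[(b+n)(b+n+1)(b+n+2)\bigr]$ for every $n\ge 0$. Substituting $b=-k/2-1$, $b+1=-k/2$, $b+2=-k/2+1$ and multiplying numerator and denominator by $8$, each factor $2(-k/2+j)$ turns into $2j-k$; in particular the numerator becomes $(-k-2)(-k)(-k+2)=k(4-k^2)$ and the denominator becomes $(2n-k-2)(2n-k)(2n-k+2)$, which gives exactly the displayed identity.

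It then only remains to evaluate the resulting infinite sum by \eqref{e:sum0infek2n0} and multiply through by $k(4-k^2)$, which is legitimate because $k$ odd forces $k\neq0$ and $k\neq\pm2$, hence $k(4-k^2)\neq0$; this yields \eqref{e:2F1finite}. The only points that need a word of justification, rather than being genuine obstacles, are that the termwise manipulations are valid and that the series makes sense: since $k$ is odd, the numbers $b$, $c$ and $b+j$ for all integers $j$ are half-odd-integers, hence never zero nor negative integers, so every $(c)_n\neq0$ and the telescoping is sound; and the hypergeometric series converges absolutely on $|z|=1$ because $\Re(c-a-b)=2>0$, while $z=e^{i2s}\neq1$ for $s\in(0,\pi)$, so we stay within the regime of validity recalled right after \eqref{e:2F1}.
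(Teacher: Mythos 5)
Your proof is correct and takes essentially the same approach as the paper: expand ${}_2F_1(1,-k/2-1;-k/2+2;e^{i2s})$ through the definition \eqref{e:2F1}, reduce the ratio $(-k/2-1)_n/(-k/2+2)_n$ to $k(4-k^2)/[(2n-k-2)(2n-k)(2n-k+2)]$ (the paper does this via the Gamma-function form \eqref{relposchgamma} of the Pochhammer symbol, you by direct cancellation of the products, which is equivalent), and then invoke \eqref{e:sum0infek2n0}. Your additional remarks on absolute convergence for $\Re(c-a-b)=2>0$ with $z=e^{i2s}\neq 1$ and on the nonvanishing of the half-odd-integer Pochhammer factors are correct and only make the argument more complete.
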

	
	\begin{proof} From the definition in \eqref{e:2F1}, and bearing in mind that $(1)_n = n!$,
		\begin{align}
			\label{e:2F1infinite}
			& {}_2F_1(1,-k/2-1;-k/2+2;e^{i2s}) = \sum_{n=0}^{\infty}\frac{(1)_n (-k/2-1)_n e^{i2ns}}{(-k/2+2)_nn!}
			\cr
			& \qquad = \sum_{n=0}^{\infty} \frac{\Gamma(-k/2+n-1)\Gamma(-k/2+2)e^{i2ns}}{\Gamma(-k/2-1)\Gamma(-k/2+n+2)}
			\cr
			& \qquad = k(4-k^2)\sum_{n=0}^{\infty}\frac{e^{i2ns}}{(2n-k-2)(2n-k)(2n-k+2)}.
		\end{align}
		Therefore, multiplying the right-hand side of \eqref{e:sum0infek2n0} by $k(4-k^2)$, we get \eqref{e:2F1finite}.
	\end{proof}
	
	Note that ${}_2F_1$ in \eqref{e:2F1infinite} is \eqref{e:2F1} evaluated at $a = 1$, $b = -k/2-1$, $c = -k/2 + 2$ and $z = e^{i2s}$. Therefore, since $s\in(0,\pi)$, we have $|z| = 1$, $z\not=1$, and $\Re(c - a - b) = 2 > 0$, so the corresponding series in \eqref{e:2F1} is absolutely convergent (see \cite[p. 5]{Slater1966}).
	
	Even if it is long known that certain Gaussian hypergeometric functions whose parameters are integers or halves of integers can be reformulated as finite sums (see, e.g., \cite{detrichconn1979}), to the best of our knowledge, this idea has not been used to develop a very efficient and accurate pseudospectral method for the numerical approximation of the half Laplacian. This will be possible thanks to the following theorem, and to the use of the so-called fast convolution, as will be explained in Section \ref{s:convolution}.
	\begin{theorem}\label{th:fraclap}
		Let $k$ be an odd integer, $L>0$, $s\in(0,\pi)$, and $x = L\cot(s) \Longleftrightarrow s = \acot(x/L)$, where $\acot(x)$ takes values in $(0,\pi)$. Then, $(-\Delta)^{1/2}e^{ik\acot(x/L)} \equiv (-\Delta)_s^{1/2}e^{iks}$ is given by
		\begin{align}
			\label{e:tha}
			(-\Delta)_s^{1/2}e^{iks} = {-\frac{2i}{L\pi (k+2)}} - \frac{k}{L}\sin^2(s)e^{iks} + \frac{8i\,{}_2F_1(1,-k/2-1;-k/2+2;e^{i2s})}{L\pi (4-k^2)},
		\end{align}
		or, equivalently,
		\begin{align}
			\label{e:thb}
			(-\Delta)_s^{1/2}e^{iks} & = {-\frac{2i\sgn(k)}{L\pi (|k|+2)}} - \frac{2ik}{L\pi}e^{iks}\biggg[\cos(s) + \sin^2(s)\ln\left(\cot\left(\frac s2\right)\right)
			\cr
			& \qquad + \sum_{n=0}^{(|k|-1)/2}\frac{4e^{-i\sgn(k)(2n+1)s}}{(2n-1)(2n+1)(2n+3)}\biggg].
		\end{align}
		Furthermore, when $L = 1$, \eqref{e:tha} and \eqref{e:thb} are expressed in terms of $x$ as
		\begin{align}
			\label{e:thc}
			& (-\Delta)^{1/2} \frac{(x + i)^k}{(1+x^2)^{k/2}} = {-\frac{2i}{\pi (k+2)}} - \frac{k(x + i)^k}{(1+x^2)^{k/2+1}}
			\cr
			& \qquad \qquad + \frac{8i}{\pi (4-k^2)}{}_2F_1\left(1,-k/2-1;-k/2+2; \frac{(x+i)^2}{1+x^2} \right)
			\cr
			& \qquad = {-\frac{2i\sgn(k)}{\pi (|k|+2)}} - \frac{2ik(x + i)^k[x(1+x^2)^{1/2} + \arg\sinh(x)]}{\pi(1+x^2)^{k/2+1}}
			\cr
			& \qquad \qquad - \sum_{n=0}^{(|k|-1)/2}\frac{8ik(x + i)^{[k-\sgn(k)(2n+1)]}}{\pi(2n-1)(2n+1)(2n+3)(1+x^2)^{[k-\sgn(k)(2n+1)]/2}}.
		\end{align}
		
	\end{theorem}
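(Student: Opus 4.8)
The plan is to start from the $k$-odd case of \eqref{e:tha1} and to reduce the bilateral sum appearing there to the quantities already evaluated in the two results preceding the theorem. First I would observe the factorization $(2n-k)(4-(2n-k)^2) = -(2n-k-2)(2n-k)(2n-k+2)$, so that, setting $a_n := e^{i2ns}/[(2n-k-2)(2n-k)(2n-k+2)]$ --- none of whose terms is singular, since $k$ is odd --- the infinite sum in \eqref{e:tha1} equals $-4\sum_{n=-\infty}^{\infty}\sgn(n)a_n$. I would then remove the sign function by means of the identity $\sum_{n=-\infty}^{\infty}\sgn(n)a_n = 2\sum_{n=0}^{\infty}a_n - \sum_{n=-\infty}^{\infty}a_n - a_0$, where $a_0 = 1/[k(4-k^2)]$, so that the bracket in \eqref{e:tha1} becomes $\tfrac{2}{4-k^2} - 4a_0 + 8\sum_{n=0}^{\infty}a_n - 4\sum_{n=-\infty}^{\infty}a_n$.

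Next I would substitute the closed forms already available: \eqref{e:suminfinfek2n0} for $\sum_{n=-\infty}^{\infty}a_n$, and the identity \eqref{e:2F1infinite} from the proof of the Corollary, which gives $8\sum_{n=0}^{\infty}a_n = 8\,{}_2F_1(1,-k/2-1;-k/2+2;e^{i2s})/[k(4-k^2)]$. The elementary cancellation $\tfrac{2}{4-k^2} - \tfrac{4}{k(4-k^2)} = \tfrac{-2}{k(k+2)}$ turns the constant part, after multiplication by $ik/(L\pi)$, into $-\tfrac{2i}{L\pi(k+2)}$; the term $-4\sum_{n=-\infty}^{\infty}a_n = i\pi e^{iks}\sin^2(s)$ becomes $-\tfrac{k}{L}\sin^2(s)e^{iks}$; and the one-sided sum produces the hypergeometric term. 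This is \eqref{e:tha}.

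To obtain \eqref{e:thb} I would substitute the finite-sum form \eqref{e:2F1finite} of ${}_2F_1$ into \eqref{e:tha}. Inside that finite sum the summand $\tfrac{i\pi}{8}\sin^2(s)$, once multiplied out, produces $+\tfrac{k}{L}\sin^2(s)e^{iks}$, which cancels exactly the corresponding term of \eqref{e:tha}; the constant $\chi_{(-\infty,-1]}(k)$ from \eqref{e:2F1finite} combines with $-\tfrac{2i}{L\pi(k+2)}$, and treating $k\ge1$ and $k\le-1$ separately one checks that $-\tfrac{2i}{L\pi(k+2)} + \tfrac{8i\chi_{(-\infty,-1]}(k)}{L\pi(4-k^2)} = -\tfrac{2i\sgn(k)}{L\pi(|k|+2)}$; the rest is a mere relabelling of the finite sum. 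Finally, \eqref{e:thc} is simply the case $L=1$ rewritten in terms of $x$ via the elementary substitutions $e^{is} = (x+i)/(x^2+1)^{1/2}$, $e^{i2s} = (x+i)^2/(x^2+1)$, $\sin^2(s) = 1/(x^2+1)$, $\cos(s) = x/(x^2+1)^{1/2}$, and $\ln(\cot(s/2)) = \arg\sinh(x)$; the last of these follows from the double-angle identity $\cot(s) = (\cot^2(s/2)-1)/(2\cot(s/2))$, whose positive root for $s\in(0,\pi)$ is $\cot(s/2) = x + \sqrt{x^2+1}$.

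I do not expect any conceptual difficulty: the genuine analysis (convergence of the series, the logarithmic closed form) is already carried out in Lemma \ref{lemma:atanh} and in the two results immediately preceding the theorem. The only delicate part is the bookkeeping --- in particular, keeping the $a_0$ correction term when removing $\sgn(n)$ (dropping it would spoil the constant term) and reconciling the $\chi_{(-\infty,-1]}(k)$ appearing in the hypergeometric form with the $\sgn(k)$ appearing in the finite-sum form, which forces one to separate the cases of positive and negative $k$.
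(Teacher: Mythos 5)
Your proposal is correct and follows essentially the same route as the paper: both start from the odd case of \eqref{e:tha1}, rewrite the bilateral signed sum as a combination of the full bilateral sum and the one-sided sum (your identity $\sum_n \sgn(n)a_n = 2\sum_{n\ge0}a_n - \sum_{n\in\mathbb Z}a_n - a_0$ is just the paper's splitting written compactly), then invoke \eqref{e:suminfinfek2n0}, \eqref{e:sum0infek2n0}/\eqref{e:2F1infinite} to get \eqref{e:tha}, insert \eqref{e:2F1finite} with the sign-of-$k$ case check for the constant to get \eqref{e:thb}, and finish \eqref{e:thc} by the same trigonometric substitutions, including $\ln\cot(s/2)=\arg\sinh(x)$. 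The bookkeeping points you flag (the $a_0$ correction and the $\chi_{(-\infty,-1]}(k)$ versus $\sgn(k)$ reconciliation) are exactly the steps the paper carries out.
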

	
	\begin{proof} In order to simplify the notation, we assume, without loss of generality, that $L = 1$; otherwise, it is enough to divide $(-\Delta)_s^{1/2}e^{iks}$ by $L$. When $k$ is odd, from \eqref{e:tha1},
		\begin{align*}
			(-\Delta)_s^{1/2}e^{iks} & = \frac{ik}{\pi}\Bigg[\frac{2}{4 - k^2} + \sum_{n=0}^\infty\frac{4e^{i2ns}}{(2n - k - 2)(2n - k)(2n - k + 2)}
			\cr
			& \qquad + \sum_{n=0}^{\infty}\frac{4e^{-i2ns}}{(2n+k-2)(2n+k)(2n+k+2)}\Bigg]
			\cr
			& = \frac{ik}{\pi}\Bigg[\frac{2}{4 - k^2} + \sum_{n=0}^\infty\frac{(4+4)e^{i2ns}}{(2n - k - 2)(2n - k)(2n - k + 2)}
			\cr
			& \qquad - \frac{4}{k(4 - k^2)} - \sum_{n=-\infty}^\infty\frac{4e^{i2ns}}{(2n - k - 2)(2n - k)(2n - k + 2)}\Bigg],
		\end{align*}
		so \eqref{e:tha} follows after applying \eqref{e:suminfinfek2n0} and \eqref{e:sum0infek2n0}. On the other hand, introducing \eqref{e:2F1finite} into \eqref{e:tha},
		\begin{align*}
			(-\Delta)_s^{1/2}e^{iks} & = {-\frac{2i}{\pi (k+2)}} +
			\frac{8i\,\chi_{(-\infty,-1]}(k)}{\pi (4-k^2)} - \frac{2ik}{\pi}e^{iks}\biggg[\cos(s) 
			\cr
			& \qquad + \sin^2(s)\ln\left(\cot\left(\frac s2\right)\right) + \sum_{n=0}^{(|k|-1)/2}\frac{4e^{-i\sgn(k)(2n+1)s}}{(2n-1)(2n+1)(2n+3)}\biggg].
		\end{align*}
		Finally, when $k \ge 1$,
		$$
		{-\frac{2i}{\pi (k+2)}} + \frac{8i\,\chi_{(-\infty,-1]}(k)}{\pi (4-k^2)} = {-\frac{2i}{\pi (k+2)}} = {-\frac{2i\sgn(k)}{\pi (|k|+2)}},
		$$
		and, when $k\le -1$,
		$$
		{-\frac{2i}{\pi (k+2)}} + \frac{8i\,\chi_{(-\infty,-1]}(k)}{\pi (4-k^2)} = \frac{2i}{\pi (-k+2)} = {-\frac{2i\sgn(k)}{\pi (|k|+2)}},
		$$
		which concludes the proof of \eqref{e:thb}. The proof of \eqref{e:thc} follows immediately from \eqref{e:tha} and \eqref{e:thb}, after applying the following identities:
		\begin{equation*}
			e^{iks} \equiv \frac{(x + i)^k}{(1+x^2)^{k/2}}, \qquad \cos(s) \equiv \frac{x}{(1+x^2)^{1/2}}, \qquad \sin^2(s) \equiv \frac{1}{1+x^2}
		\end{equation*}
		and
		$$
		\ln\left(\cot\left(\dfrac{s}{2}\right)\right) \equiv \ln(x + (x ^ 2+1)^{1/2}) \equiv \arg\sinh(x).
		$$
	\end{proof}
	
	Note that there are other equivalent ways of expressing \eqref{e:tha}, \eqref{e:thb} and \eqref{e:thc}. For instance, using the fact that $(-\Delta)_s^{1/2}e^{iks} = \overline{(-\Delta)_s^{1/2}e^{-iks}}$, \eqref{e:tha} becomes
	\begin{equation*}
		(-\Delta)_s^{1/2}e^{iks} = {-\frac{2i}{L\pi (k-2)}} + \frac{k}{L}\sin^2(s)e^{iks} - \frac{8i\,{}_2F_1(1,k/2-1;k/2+2;e^{-i2s})}{L\pi (4-k^2)}.
	\end{equation*}
	
	\begin{corollary}Let $k$ be an odd integer, $L>0$, $s\in(0,\pi)$, and $x = L\cot(s) \Longleftrightarrow s = \acot(x/L)$, where $\acot(x)$ takes values in $(0,\pi)$. Then,
		\begin{align}
			\label{e:deltas12cosks}
			(-\Delta)_s^{1/2}\cos(ks) & = \frac{2k}{L\pi}\sin(ks)\left(\cos(s) + \sin^2(s)\ln\left(\cot\left(\frac s2\right)\right)\right) 
			\cr
			& \qquad + \frac{8k}{L\pi}\sum_{n=0}^{(k-1)/2}\frac{\sin((k-1-2n)s)}{(2n-1)(2n+1)(2n+3)},
		\end{align}
		and
		\begin{align}
			\label{e:deltas12sinks}
			(-\Delta)_s^{1/2}\sin(ks) & = {-\frac{2}{L\pi (k+2)}} - \frac{2k}{L\pi}\cos(ks)\left(\cos(s) + \sin^2(s)\ln\left(\cot\left(\frac s2\right)\right)\right) 
			\cr
			& \qquad - \frac{8k}{L\pi}\sum_{n=0}^{(k-1)/2}\frac{\cos((k-1-2n)s)}{(2n-1)(2n+1)(2n+3)}.
		\end{align}
		
		\begin{proof} \eqref{e:deltas12cosks} and \eqref{e:deltas12sinks} are respectively the real part and the imaginary part of \eqref{e:thb}. Note that these expressions can be immediately formulated in terms of $x$, by choosing $L = 1$ and taking respectively the real part and the imaginary part of \eqref{e:thc}.
		\end{proof}
		
	\end{corollary}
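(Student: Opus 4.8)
The plan is to read off \eqref{e:deltas12cosks} and \eqref{e:deltas12sinks} directly as the real and imaginary parts of the compact formula \eqref{e:thb} in Theorem~\ref{th:fraclap}. First I would justify that this is legitimate. Since $(-\Delta)_s^{1/2}$ is $\mathbb R$-linear and, by the remark following Theorem~\ref{th:fraclap}, $(-\Delta)_s^{1/2}e^{iks}=\overline{(-\Delta)_s^{1/2}e^{-iks}}$, we have
\[
(-\Delta)_s^{1/2}\cos(ks)=\tfrac12\bigl[(-\Delta)_s^{1/2}e^{iks}+\overline{(-\Delta)_s^{1/2}e^{iks}}\bigr]=\Re\bigl[(-\Delta)_s^{1/2}e^{iks}\bigr],
\]
and likewise $(-\Delta)_s^{1/2}\sin(ks)=\Im\bigl[(-\Delta)_s^{1/2}e^{iks}\bigr]$. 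So everything reduces to extracting the real and imaginary parts of the right-hand side of \eqref{e:thb}.

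Next I would specialize \eqref{e:thb} to $L=1$ and to $k$ a positive odd integer, so that $\sgn(k)=1$ and $|k|=k$, and simplify the bracket. Writing $A(s)=\cos(s)+\sin^2(s)\ln\cot(s/2)$, which is real for $s\in(0,\pi)$, and using $e^{iks}e^{-i(2n+1)s}=e^{i(k-1-2n)s}$ (note $k-1-2n$ is even since $k$ is odd, consistent with the $\pi$-periodicity in $s$), formula \eqref{e:thb} reads
\[
(-\Delta)_s^{1/2}e^{iks}=-\frac{2i}{\pi(k+2)}-\frac{2ik}{\pi}\left[e^{iks}A(s)+\sum_{n=0}^{(k-1)/2}\frac{4\,e^{i(k-1-2n)s}}{(2n-1)(2n+1)(2n+3)}\right].
\]
Taking real and imaginary parts term by term with $\Re(ie^{i\theta})=-\sin\theta$ and $\Im(ie^{i\theta})=\cos\theta$: the constant $-2i/[\pi(k+2)]$ contributes $0$ to the real part and $-2/[\pi(k+2)]$ to the imaginary part; the term $-\tfrac{2ik}{\pi}e^{iks}A(s)$ contributes $\tfrac{2k}{\pi}\sin(ks)A(s)$ and $-\tfrac{2k}{\pi}\cos(ks)A(s)$; and the finite sum contributes $\tfrac{8k}{\pi}\sum\sin((k-1-2n)s)/[(2n-1)(2n+1)(2n+3)]$ and $-\tfrac{8k}{\pi}\sum\cos((k-1-2n)s)/[(2n-1)(2n+1)(2n+3)]$, respectively. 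Collecting these yields exactly \eqref{e:deltas12cosks} and \eqref{e:deltas12sinks} for $s\in(0,\pi)$.

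The only remaining point is the extension to $s\in(-\pi,0)$, and this is where I would be careful rather than where I expect a real obstacle: on that interval $\cot(s/2)<0$, so $\ln\cot(s/2)$ must be read through its natural real representative, via the identity $\ln\cot(s/2)\equiv\arg\sinh(\cot s)$ valid on $(0,\pi)$, or equivalently by rewriting the whole identity in the variable $x=\cot s$ using \eqref{e:thc}, which is meaningful for every real $x$; the two sides are then single-valued functions of $x$ and coincide, the scalar factor $A(s)$ being even under $s\mapsto -s$ with this reading. Apart from this bookkeeping, the proof is routine manipulation of complex exponentials, the main things to watch being the $\sgn(k)$/$|k|$ simplifications of the constant term and the parity in $s$.
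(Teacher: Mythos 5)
For $s\in(0,\pi)$ your argument is exactly the paper's proof: set $L=1$ and $k$ a positive odd integer in \eqref{e:thb} and take real and imaginary parts; your term-by-term extraction (including the preliminary observation that $(-\Delta)_s^{1/2}\cos(ks)=\Re[(-\Delta)_s^{1/2}e^{iks}]$ and $(-\Delta)_s^{1/2}\sin(ks)=\Im[(-\Delta)_s^{1/2}e^{iks}]$, which the paper leaves implicit) is correct.

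Your treatment of $s\in(-\pi,0)$ --- the one point where the statement exceeds the literal range of \eqref{e:thb}, and which the paper's own proof silently ignores --- is not right as written. First, with the reading $\ln\cot(s/2)\equiv\arg\sinh(\cot s)$ the factor $A(s)=\cos(s)+\sin^2(s)\arg\sinh(\cot s)$ is \emph{not} even under $s\mapsto-s$, since $\arg\sinh(\cot(-s))=-\arg\sinh(\cot s)$ while $\cos$ is even. Second, ``rewriting in $x$ via \eqref{e:thc}'' does not settle the matter by itself: for $s\in(-\pi,0)$ and odd $k$ one has, at $x=\cot(s)$, $(x+i)^k/(x^2+1)^{k/2}=-e^{iks}$ and $x(x^2+1)^{1/2}=-\cos(s)/\sin^2(s)$, so several signs flip relative to $(0,\pi)$. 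Tracking them, the real part of \eqref{e:thc} evaluated at $x=\cot(s)$ with $s\in(-\pi,0)$ reproduces the right-hand side of \eqref{e:deltas12cosks} only if $\ln\cot(s/2)$ is read as $\ln|\cot(s/2)|=-\arg\sinh(\cot s)$ --- the opposite sign to your proposal --- and the left-hand side is understood through the $\pi$-periodicity of $(-\Delta)_s^{1/2}$ (its value at $s\in(-\pi,0)$ is the value at $s+\pi$); indeed, for odd $k$ the $s$-function $\cos(ks)$ on $(-\pi,0)$ represents $-\cos(k\acot(x))$, so the fully literal reading would even be off by a global sign. In short, your proof of the case $s\in(0,\pi)$ is complete and coincides with the paper's; the excursion to $(-\pi,0)$ needs its sign bookkeeping redone (or can simply be left out, as the paper effectively does).
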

	
	\section{A fast convolution result}
	
	\label{s:convolution}
	
	Given a sequence $b : \mathbb Z\to\mathbb C$ of complex numbers, which we denote as $\{b_l\}$, and a number $P\in\mathbb N$, we say that $\{b_l\}$ is $P$-periodic if $b_{l + P} = b_l$, for all $l\in\mathbb Z$. Therefore, in order to define a $P$-periodic sequence, just $P$ adjacent values are needed, e.g., $\{b_0, b_1, \ldots, b_{P-1}\}$.
	
	Recall that, given a $P$-periodic sequence $\{b_l\}$ of complex numbers, its discrete Fourier transform $\{\hat b_p\}$ is defined from the $P$ values $\{b_0, b_1, \ldots, b_{P-1}\}$ of the first period of $\{b_l\}$ as
	\begin{equation}
		\label{e:FFT}
		\hat{b}_{p} = \sum_{l=0}^{P-1}b_{l}e^{-i\pi2lp/P},\quad p\in\mathbb Z.
	\end{equation}
	Then, it is straightforward to check that $\{\hat b_p\}$ is also a $P$-periodic sequence. Moreover, the original numbers $\{b_0, b_1, \ldots, b_{P-1}\}$ and their periodic extension can be recovered by means of the inverse discrete Fourier transform of the $P$ values $\{\hat b_0, \hat b_1, \ldots, \hat b_{P-1}\}$ of the first period of $\{\hat b_p\}$:
	\begin{equation}
		\label{e:IFFT}
		b_l = \frac{1}{P}\sum_{p=0}^{P-1}\hat{b}_{p}e^{i\pi2lp/P},\quad p\in\mathbb Z.
	\end{equation}
	Note that, in principle, a direct implementation of \eqref{e:FFT} and \eqref{e:IFFT} would give us a computational cost of the order of $\mathcal O(P^2)$. However, it is very important to underline that both \eqref{e:FFT} and \eqref{e:IFFT} can be computed in just $\mathcal O(P\ln(P))$ operations by means of the fast Fourier transform (FFT) and the inverse fast Fourier transform (IFFT), respectively (see \cite{FFT}).
	
	On the other hand, if we consider the convolution of two $P$-periodic sequences $\{b_l\}$ and $\{c_l\}$:
	\begin{equation}
		\label{e:defconf}
		(b\ast c)_{l} = \sum_{n=0}^{P-1}b_{l}c_{n-l},\quad l\in\mathbb Z;
	\end{equation}
	then $(b\ast c)_{l} = (c\ast b)_{l}$ is a $P$-periodic sequence satisfying
	\begin{equation}
		\label{e:convprop}
		(\widehat{b\ast c})_{p} = \hat{b}_{p}\hat{c}_{p}, \quad p\in\mathbb Z.
	\end{equation}
	This very important property, whose proof is straightforward (see, e.g., \cite{cayamacuestadelahozgarciacervera2022,GarciaCervera2007}), is known as the discrete convolution theorem. Thanks to it, \eqref{e:defconf} can be computed in only $\mathcal O(P\ln(P))$ operations, because just two FFTs and one IFFT are required, whereas a direct calculation of \eqref{e:defconf} without \eqref{e:convprop} would instead need $\mathcal O(P^2)$ operations, i.e., it would be much more expensive from a computational point of view.
	
	However, if $\{b_l\}$ and $\{c_l\}$ are not periodic, we cannot apply directly \eqref{e:convprop}. For instance, in Lemma \ref{lemma:fast}, given $M \in \mathbb N$, we will need to compute a convolution of this form:
	\begin{equation}
		\label{e:bastc}
		(b\ast c)_l = \sum_{n = 0}^Mb_nc_{l-n}, \quad 0 \le l \le M,
	\end{equation}
	where $\{b_l\}$ and $\{c_l\}$ are not $M+1$-periodic sequences, but finite ones; indeed, only the values $\{b_0, \ldots, b_M\}$ and $\{c_{-M}, \ldots, c_M\}$ are known.
	Note that, even if $\{b_l\}$ poses no problems and can be regarded as $M+1$ periodic, we cannot do such thing with $\{c_l\}$, because $c_{-M}\not=c_{1}, \ldots, c_{-1}\not=c_{M}$, so applying the FFT to only $\{c_0, \ldots, c_M\}$ would miss the data $\{c_{-M}, \ldots, c_{-1}\}$ and hence would not be enough to compute correctly \eqref{e:bastc}. Therefore, following the ideas in \cite{cayamacuestadelahozgarciacervera2022,GarciaCervera2007}, we take an integer $P \ge 2M+1$, and extend them as follows:
	\begin{equation}
		\label{e:tildebl0}
		\tilde b_l =
		\begin{cases}
			b_l, & 0 \le l \le M,
			\\
			0, & M + 1\le l\le P-1,
		\end{cases}
	\end{equation}
	and
	\begin{equation}
		\label{e:tildecl0}
		\tilde c_l = \begin{cases}
			c_{l}, & 0 \le l\le M, \\
			0, & M + 1\le l \le P - M - 1,
			\\
			c_{l-P}, & P - M \le l \le P - 1.
		\end{cases}
	\end{equation}
	Then, we have immediately that
	\begin{equation}
		\label{e:bastcP}
		(b\ast c)_l \equiv \sum_{n=0}^{M-1}b_{n}c_{l-n} = \sum_{n=0}^{P-1}\tilde b_{n}\tilde c_{l-n} = (\tilde b\ast \tilde c)_{l}, \quad l = 0, \ldots, M,
	\end{equation}
	Evidently, $\{\tilde b_0, \ldots, \tilde b_{P-1}\}$ and $\{\tilde c_0, \ldots, \tilde c_{P-1}\}$ are still finite sequences, but we can regard them now as $P$-periodic. Therefore, in \eqref{e:bastcP}, when $\{\tilde c_{-M}, \ldots, \tilde c_{-1}\}$ intervene, assuming that $\{\tilde c_l\}$ is $P$-periodic implies that $\tilde c_{-M} \equiv \tilde c_{P-M}, \ldots, \tilde c_{-1} \equiv \tilde c_{P-1}$, but, from the definition of $\{\tilde c_l\}$ in \eqref{e:tildecl0}, $\tilde c_{P-M} \equiv c_{-M}, \ldots, \tilde c_{P-1} \equiv c_{-1}$, so $\tilde c_{-M} \equiv c_{-M}, \ldots, \tilde c_{-1} \equiv c_{-1}$, as is required. Hence, $(\tilde b\ast \tilde c)_{l}$, for $0 \le l \le P-1$, can be computed by means of a fast convolution, and then we keep $(b\ast c)_l \equiv (\tilde b\ast \tilde c)_{l}$, for $0\le l\le M$, and ignore $(\tilde b\ast \tilde c)_{l}$, for $M+1\le l\le P-1$. This idea is used in the following lemma to compute efficiently linear combinations of $(-\Delta)_s^{1/2}e^{iks}$, for $k \in\{1, 3, \ldots, 2M+1\}$, and for $k \in\{-1, -3, \ldots, -2M-1\}$.
	
	\begin{lemma} \label{lemma:fast} Let $M\in\mathbb N$, $a_0, a_1, \ldots, a_M\in\mathbb C$, and $s\in(0,\pi)$. Then,
		\begin{align}
			\label{e:sumeiks}
			& \sum_{l = 0}^Ma_l(-\Delta)_s^{1/2}e^{i(2l+1)s} = {-\frac{2i}{L\pi}}\sum_{l = 0}^M\frac{a_l}{2l+3}  + \frac{2i}{L\pi}\sum_{l = 0}^M(\tilde b\ast\tilde c)_le^{i2ls}
			\cr
			& \qquad - \frac{2i}{L\pi}\Big[\cos(s) + \sin^2(s)\ln\left(\cot\left(\frac s2\right)\right)\Big]\sum_{l = 0}^M(2l+1)a_le^{i(2l+1)s},
		\end{align}
		where $P$ can be any natural number such that $P \ge 2M+1$, and $(\tilde b\ast\tilde c)_l$ denotes the discrete convolution of the $P$-periodic sequences $\tilde b_l$ and $\tilde c_l$: 
		\begin{equation}
			\label{e:tildebconvtildec}
			(\tilde b\ast\tilde c)_l = \sum_{n = 0}^P\tilde b_n\tilde c_{l-n},
		\end{equation}
		with $\tilde b_l$ and $\tilde c_l$ being given respectively by
		\begin{equation}
			\label{e:tildebl}
			\tilde b_l =
			\left\{\begin{aligned}
				& (8l+4)a_l, & & 0 \le l \le M,
				\\
				& 0, & & M + 1 \le l \le P - 1,
			\end{aligned}
			\right.
		\end{equation}
		and
		\begin{equation}
			\label{e:tildecl}
			\tilde c_l = \left\{\begin{aligned}
				& \frac13, & & l = 0, \\
				& 0, & & 1 \le l \le P-M-1, \\
				& \frac{1}{(2(l-P)-3)(2(l-P)-1)(2(l-P)+1)}, & & P - M \le l \le P-1.
			\end{aligned}\right.
		\end{equation}
		
	\end{lemma}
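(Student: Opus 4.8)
The plan is a direct computation. I would substitute the closed form of $(-\Delta)_s^{1/2}e^{i(2l+1)s}$ furnished by Theorem~\ref{th:fraclap} into the left-hand side of~\eqref{e:sumeiks} and then recognize the only non-elementary piece as the advertised discrete convolution. Concretely, take $k = 2l+1$ in~\eqref{e:thb}; since $k$ is a positive odd integer, $|k| = 2l+1$, $\sgn(k) = 1$ and $(|k|-1)/2 = l$. Multiplying the resulting expression by $a_l$ and summing over $0\le l\le M$, the constant term produces $-\frac{2i}{L\pi}\sum_l\frac{a_l}{2l+3}$, and the part carrying $\cos(s)+\sin^2(s)\ln(\cot(s/2))$ produces $-\frac{2i}{L\pi}\bigl(\cos(s)+\sin^2(s)\ln(\cot(s/2))\bigr)\sum_l(2l+1)a_le^{i(2l+1)s}$; these are precisely the first and third terms on the right-hand side of~\eqref{e:sumeiks}. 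Thus everything reduces to proving
\[
-\sum_{l=0}^M (2l+1)a_l e^{i(2l+1)s}\sum_{n=0}^{l}\frac{4e^{-i(2n+1)s}}{(2n-1)(2n+1)(2n+3)} = \sum_{l=0}^M(\tilde b\ast\tilde c)_l\,e^{i2ls}.
\]

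Next I would collapse $e^{i(2l+1)s}e^{-i(2n+1)s} = e^{i2(l-n)s}$ and reindex the double sum by $m = l-n$: since $0\le n\le l\le M$, the pair $(m,l)$ ranges over $0\le m\le l\le M$, and interchanging the order of summation (outer index $m$, inner index $l$ running from $m$ to $M$) rewrites the left-hand side as $\sum_{m=0}^M e^{i2ms}\bigl(-\sum_{l=m}^M 4(2l+1)a_l/[(2(l-m)-1)(2(l-m)+1)(2(l-m)+3)]\bigr)$. Matching the coefficients of $e^{i2ms}$, it then suffices to show that for each $0\le m\le M$,
\[
(\tilde b\ast\tilde c)_m = -\sum_{l=m}^M\frac{4(2l+1)a_l}{(2(l-m)-1)(2(l-m)+1)(2(l-m)+3)}.
\]

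To establish this I would expand $(\tilde b\ast\tilde c)_m$ with the definitions~\eqref{e:tildebl}--\eqref{e:tildecl}. Because $\tilde b_n = 0$ for $n>M$, only $n = 0,\dots,M$ contribute, so $(\tilde b\ast\tilde c)_m = \sum_{n=0}^M 4(2n+1)a_n\,\tilde c_{m-n}$ with $m-n$ read modulo $P$. The hypothesis $P\ge 2M+1$ is exactly what guarantees that, for $0\le m,n\le M$, the reduced index stays in the correct block of~\eqref{e:tildecl}: a value $m-n\ge 1$ satisfies $1\le m-n\le M\le P-M-1$ and lies in the middle zero block, while a value $m-n\le -1$ satisfies $P-M\le m-n+P\le P-1$ and lies in the last block. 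Hence only three cases survive: $n=m$ contributes $4(2m+1)a_m\,\tilde c_0 = \tfrac43(2m+1)a_m$, which is the $l=m$ term on the right above (where the denominator equals $(-1)(1)(3)=-3$); $n<m$ contributes $0$; and $n>m$ contributes $4(2n+1)a_n/[(2(m-n)-3)(2(m-n)-1)(2(m-n)+1)]$, which by the identity $(2(m-n)-3)(2(m-n)-1)(2(m-n)+1) = -(2(n-m)-1)(2(n-m)+1)(2(n-m)+3)$ equals $-4(2n+1)a_n/[(2(n-m)-1)(2(n-m)+1)(2(n-m)+3)]$, i.e. the $l=n$ term on the right. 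Adding the three cases gives the claimed identity, and hence~\eqref{e:sumeiks}.

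The only step that is more than routine rearrangement of finite sums is this reduction of $m-n$ modulo $P$ together with the sign flips it triggers in the cubic denominator; I would be most careful to check that the zero padding of $\tilde b$ and $\tilde c$ is wide enough that no wrap-around contaminates the indices $0\le m\le M$ of interest, which is exactly the role of the condition $P\ge 2M+1$.
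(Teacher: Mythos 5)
Your proposal is correct and follows essentially the same route as the paper: substitute \eqref{e:thb} with $k=2l+1$, split off the constant and the $\cos(s)+\sin^2(s)\ln(\cot(s/2))$ terms, and recognize the remaining double sum, after reindexing by $l-n$ and interchanging the order of summation, as the convolution $\sum_l(\tilde b\ast\tilde c)_l e^{i2ls}$. The only cosmetic difference is that you verify by hand that $P\ge 2M+1$ prevents wrap-around in $\tilde c_{m-n}$ (with the sign flip in the cubic denominator), whereas the paper invokes its general zero-padding construction \eqref{e:tildebl0}--\eqref{e:tildecl0}; the content is the same.
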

	
	\begin{proof} We consider a linear combination of  $(-\Delta)_s^{1/2}e^{iks}$, for $k = 1, \ldots, 2M+1$. From \eqref{e:thb} in Theorem \ref{th:fraclap},
		\begin{align*}
			& \sum_{l = 0}^Ma_l(-\Delta)_s^{1/2}e^{i(2l+1)s} = \sum_{l = 0}^Ma_l\biggg({-\frac{2i}{L\pi (2l+3)}} - \frac{2i(2l+1)}{L\pi}e^{i(2l+1)s}\biggg[\cos(s)
			\cr
			& \qquad \qquad + \sin^2(s)\ln\left(\cot\left(\frac s2\right)\right)  + \sum_{n=0}^{l}\frac{4e^{-i(2n+1)s}}{(2n-1)(2n+1)(2n+3)}\biggg]\biggg)
			\cr
			& \qquad = \Bigg[{-\frac{2i}{L\pi}}\sum_{l = 0}^M\frac{a_l}{2l+3} - \frac{2i}{L\pi}\left(\cos(s) + \sin^2(s)\ln\left(\cot\left(\frac s2\right)\right)\right)
			\cr
			& \qquad \qquad \times\sum_{l = 0}^M(2l+1)a_le^{i(2l+1)s}\Bigg] + \left[{-\frac{2i}{L\pi}}\sum_{l = 0}^M\sum_{n=0}^{l}\frac{a_l(8l+4)e^{i2(l-n)s}}{(2n-1)(2n+1)(2n+3)}\right]
			\cr
			& \qquad = [\textrm{I}]  + [\textrm{II}],
		\end{align*}
		where $[\textrm{I}]$ corresponds to the first and last terms in \eqref{e:sumeiks}, and $[\textrm{II}]$, which corresponds to the second term in \eqref{e:sumeiks}, can be expressed as a convolution. More precisely, replacing $n$ by $l - n$, changing the order of summation, introducing the characteristic function $\chi_{(-\infty,0]}(x)$, which equals $1$ when $x\le 0$, and $0$ when $x > 0$, and swapping $n$ and $l$, it becomes
		\begin{align*}
			[\textrm{II}] & = {-\frac{2i}{L\pi}}\sum_{l = 0}^M\sum_{n=0}^{l}\frac{a_l(8l+4)e^{i2ns}}{(2(l-n)-1)(2(l-n)+1)(2(l-n)+3)}
			\cr
			& = \frac{2i}{L\pi}\sum_{n = 0}^M\left[\sum_{l=n}^{M}\frac{-a_l(8l+4)}{(2(l-n)-1)(2(l-n)+1)(2(l-n)+3)}\right]e^{i2ns}
			\cr
			& = \frac{2i}{L\pi}\sum_{n = 0}^M\left[\sum_{l=0}^{M}\frac{a_l(8l+4)\chi_{(-\infty,0]}(n-l)}{(2(n-l)-3)(2(n-l)-1)(2(n-l)+1)}\right]e^{i2ns}
			\cr
			& = \frac{2i}{L\pi}\sum_{n = 0}^M\left[\sum_{l=0}^{M}b_lc_{n-l}\right]e^{i2ns} = \frac{i}{L\pi}\sum_{l = 0}^M\left[\sum_{n=0}^{M}b_nc_{l-n}\right]e^{i2ls} = \frac{i}{L\pi}\sum_{l = 0}^M(b\ast c)_le^{i2ls},
		\end{align*}
		where $(b\ast c)_l$, for $l\in\{0, \ldots, M\}$, denotes the discrete convolution between the sequences $b_l$ and $c_l$, which are given respectively by
		\begin{align*}
			b_l & = (8l+4)a_l,
			\cr
			c_l & = \frac{\chi_{(-\infty, 0]}(l)}{(2l-3)(2l-1)(2l+1)} =
			\left\{
			\begin{aligned}
				& 0, & & l > 0,
				\cr
				& \frac{1}{(2l-3)(2l-1)(2l+1)}, & & l \le 0.
			\end{aligned}
			\right.
		\end{align*}
		At this point, since neither $b_l$ nor $c_l$ are periodic, we extend them to form $P$-periodic sequences $\tilde b_l$ and $\tilde c_l$, with $P \ge 2M+1$, by means of \eqref{e:tildebl0} and \eqref{e:tildecl0}, obtaining precisely \eqref{e:tildebl} and \eqref{e:tildecl}. Then,  $(\tilde b\ast \tilde c)_l = (b\ast c)_l$, and it follows that
		$$
		[\textrm{II}] = \frac{2i}{L\pi}\sum_{l = 0}^M(\tilde b\ast \tilde c)_le^{i2ls},
		$$
		which concludes the proof of \eqref{e:sumeiks}.
	\end{proof}

	\begin{corollary}
		Let $M\in\mathbb N$, $a_0, a_1, \ldots, a_M\in\mathbb C$, and $s\in(0,\pi)$. Then,
		\begin{align}
			\label{e:sumeiksneg}
			& \sum_{l = 0}^Ma_l(-\Delta)_s^{1/2}e^{-i(2l+1)s} = \frac{2i}{L\pi}\sum_{l = 0}^M\frac{a_l}{2l+3}  - \frac{2i}{L\pi}\sum_{l = 0}^M(\tilde b\ast\tilde c)_le^{-i2ls}
			\cr
			& \qquad + \frac{2i}{L\pi}\Big[\cos(s) + \sin^2(s)\ln\left(\cot\left(\frac s2\right)\right)\Big]\sum_{l = 0}^M(2l+1)a_le^{-i(2l+1)s},
		\end{align}
		where $(\tilde b\ast\tilde c)_l$ is defined in \eqref{e:tildebconvtildec}, and $\tilde b_l$ and $\tilde c_l$ are given respectively by \eqref{e:tildebl} and \eqref{e:tildecl}.
		
	\end{corollary}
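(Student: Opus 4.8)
The plan is to derive \eqref{e:sumeiksneg} from Lemma \ref{lemma:fast} by a complex-conjugation argument, using that for real $s\in(0,\pi)$ one has $(-\Delta)_s^{1/2}e^{-iks}=\overline{(-\Delta)_s^{1/2}e^{iks}}$ for every odd integer $k$. This identity is read directly off \eqref{e:thb}: since $-k$ is again an odd integer, $(-\Delta)_s^{1/2}e^{-iks}$ equals the right-hand side of \eqref{e:thb} with $k$ replaced by $-k$, and because $\cos(s)$, $\sin^2(s)$ and $\ln(\cot(s/2))$ are real on $(0,\pi)$, replacing $k$ by $-k$ turns that right-hand side into its own complex conjugate. (Equivalently, this is the remark recorded just after the proof of Theorem \ref{th:fraclap}, and it is also apparent from \eqref{e:deltas12cosks}--\eqref{e:deltas12sinks}, where $(-\Delta)_s^{1/2}\cos(ks)$ and $(-\Delta)_s^{1/2}\sin(ks)$ come out real-valued for real $s$.)

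First I would apply Lemma \ref{lemma:fast} with the coefficients $\overline{a_0},\ldots,\overline{a_M}$ in place of $a_0,\ldots,a_M$. This gives an expression for $\sum_{l=0}^M\overline{a_l}\,(-\Delta)_s^{1/2}e^{i(2l+1)s}$ which is exactly \eqref{e:sumeiks} with every occurrence of $a_l$ replaced by $\overline{a_l}$; in particular, the sequence in \eqref{e:tildebl} becomes $\tilde b'_l=(8l+4)\overline{a_l}$ for $0\le l\le M$ (and $0$ otherwise), while $\tilde c_l$ in \eqref{e:tildecl} is unaffected.

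Next I would conjugate that identity. On the left it produces $\sum_{l=0}^M a_l\,\overline{(-\Delta)_s^{1/2}e^{i(2l+1)s}}=\sum_{l=0}^M a_l\,(-\Delta)_s^{1/2}e^{-i(2l+1)s}$, by the identity recalled above. On the right, conjugation flips the sign of the prefactors $\mp 2i/(L\pi)$, sends $e^{i2ls}$ and $e^{i(2l+1)s}$ to $e^{-i2ls}$ and $e^{-i(2l+1)s}$, leaves $\cos(s)+\sin^2(s)\ln(\cot(s/2))$ and the rational sums unchanged, and replaces $(\tilde b'\ast\tilde c)_l$ by $\overline{(\tilde b'\ast\tilde c)_l}$. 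Since the entries of $\tilde c_l$ are real and $\overline{\tilde b'_n}=\overline{(8n+4)\overline{a_n}}=(8n+4)a_n$, we get $\overline{(\tilde b'\ast\tilde c)_l}=\sum_{n=0}^P(8n+4)a_n\,\tilde c_{l-n}=(\tilde b\ast\tilde c)_l$ with $\tilde b_l$, $\tilde c_l$ precisely as in \eqref{e:tildebl}--\eqref{e:tildecl}. Collecting the three resulting terms yields \eqref{e:sumeiksneg}.

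There is no serious obstacle: the argument is routine bookkeeping of conjugates. The only point deserving care is that the kernel $\tilde c_l$ of \eqref{e:tildecl} is real-valued, which is what lets the conjugate of $(\tilde b'\ast\tilde c)_l$ collapse back to the convolution $(\tilde b\ast\tilde c)_l$ formed from the original $a_l$; for a complex kernel one would be left with $\overline{\tilde c_l}$ instead. If one prefers to avoid the conjugation shortcut altogether, one can instead rerun the proof of Lemma \ref{lemma:fast} verbatim starting from \eqref{e:thb} with $k=-(2l+1)$, $l\ge0$; the sign changes then propagate transparently and produce the same formula, at the cost of a longer computation.
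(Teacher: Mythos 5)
Your proposal is correct and follows essentially the same route as the paper: the paper's proof likewise rests on the identity $(-\Delta)_s^{1/2}e^{-i(2l+1)s}=\overline{(-\Delta)_s^{1/2}e^{i(2l+1)s}}$ and then observes that the argument is the same as for \eqref{e:sumeiks}. Your explicit conjugation of Lemma \ref{lemma:fast} applied to $\overline{a_l}$, together with the remark that $\tilde c_l$ is real so the convolution collapses back to $(\tilde b\ast\tilde c)_l$, simply spells out the bookkeeping the paper leaves implicit.
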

	
	\begin{proof}  Bearing in mind that $(-\Delta)_s^{1/2}e^{-i(2l+1)s} = \overline{(-\Delta)_s^{1/2}e^{i(2l+1)s}}$, the proof of \eqref{e:sumeiksneg} is identical to that of \eqref{e:sumeiks}.
		
	\end{proof}
	
	The fact that $(\tilde b\ast \tilde c)_l$ in \eqref{e:sumeiks} and \eqref{e:sumeiksneg}, as defined in \eqref{e:tildebconvtildec}, can be computed by means of a fast convolution in $\mathcal{O}(P\ln(P))$ operations will allow us to develop an efficient and accurate pseudosprectral method for the numerical computation of the half Laplacian.
	
	\section{Numerical implementation}\label{s:implementation}
	
	We consider two different situations: when $U(s)$ is periodic of period $\pi$, and when it is not, where, by $U(s)$ being periodic, we understand that $U(0) = U(\pi)$, or, equivalently, in terms of $x$, that $\lim_{x\to-\infty}u(x) = \lim_{x\to\infty}u(x)$.
	
	\subsection{Case with $U(s)$ periodic of period $\pi$}\label{s:periodic} Although this case is a straightforward application of \eqref{e:tha1} for $k$ even and poses no problems, we cover it for the sake of completeness, and also because it is indeed possible to reduce the non-periodic case to the periodic one, as we will show in Section \ref{s:nonperiodictoperiodic}.
	
	Indeed, since the period is $\pi$, then, $U(s)$ can be expanded as
	$$
	U(s) = \sum_{k = -\infty}^{\infty}\hat U(k)e^{i2ks},
	$$
	However, we cannot deal with the infinitely many frequencies, so we approximate it as
	\begin{equation}
		\label{e:U(s)}
		U(s) \approx \sum_{k = -N/2}^{N/2-1}\hat U(k)e^{i2ks},
	\end{equation}
	and, following a pseudospectral approach \cite{trefethen}, impose the equality at $N$ different nodes. In our case, we take $N$ equally-spaced nonterminal nodes:
	\begin{equation}
		\label{e:sj}
		s_{j}=\frac{\pi(2j+1)}{2N}\Longleftrightarrow x_{j} = L\cot\left(\frac{\pi(2j+1)}{2N}\right), \quad j\in\{0, \ldots, N-1\},
	\end{equation}
	so $s_0 = \pi / (2N)$, $s_{N-1} = \pi - \pi / (2N)$, $s_{j+1} - s_j = \pi/N$, for all $j$. Evaluating \eqref{e:U(s)} at \eqref{e:sj} and imposing the equality:
	\begin{align}
		\label{e:usjfourier}
		U(s_j) & = \sum_{k = -\lfloor N/2\rfloor}^{\lceil N/2\rceil-1}\hat U(k)e^{i2ks_j} = \sum_{k = -\lfloor N/2\rfloor}^{\lceil N/2\rceil-1}\hat U(k)e^{i\pi k(2j+1)/N}
		\cr
		& = \sum_{k = 0}^{\lceil N/2\rceil-1}\left[e^{i\pi k/N}\hat U(k)\right]e^{i\pi 2 jk/N} + \sum_{k = \lceil N/2\rceil}^{N-1}\left[e^{i\pi (k-N)/N}\hat U(k - N)\right]e^{i\pi 2 jk/N},
	\end{align}
	where $\lfloor\cdot\rfloor$ and $\lceil\cdot\rceil$ denote respectively the floor and ceil functions, so, according to \eqref{e:IFFT}, the values $\{U(s_j)\}$ are precisely the inverse discrete Fourier transform of
	\begin{align*}
		\Big\{N\hat U(0), Ne^{i\pi/N}\hat{U}(1), \ldots, & Ne^{i(\lceil N/2\rceil-1)\pi/N}\hat{U}(\lceil N/2\rceil-1),
		\cr
		& Ne^{-i\lfloor N/2\rfloor\pi/N}\hat{U}(-\lfloor N/2\rfloor), \ldots, Ne^{-i\pi/N}\hat{U}(-1)\Big\};
	\end{align*}
	and, conversely,
	\begin{equation}
		\label{e:hatukfourier}
		\hat{U}(k) \equiv \frac{e^{-i\pi k/N}}{N}\sum_{j=0}^{N-1}U(s_{j})e^{-2ijk\pi/N},
	\end{equation}
	i.e., according to \eqref{e:FFT}, the Fourier coefficients $\hat U(k)$ are given by computing the discrete Fourier transform of $\{U(s_0), \ldots, U(s_{N-1})\}$, and multiplying the result by $e^{-ik\pi/N}/N$, for $k\in\{-\lfloor N/2\rfloor, \ldots, \lceil N/2\rceil-1\}$. Therefore, both \eqref{e:usjfourier} and \eqref{e:hatukfourier} can be obtained very efficiently by means of the IFFT and FFT, respectively. On the other hand, we apply systematically a Krasny filter \cite{krasny}, i.e., we set to zero all the Fourier coefficients $\hat U(k)$ with modulus smaller than a fixed threshold, which in this paper is the epsilon of the machine, namely $\varepsilon = 2^{-52}$.
	
	On the other hand, applying $(-\Delta)_s^{1/2}$ to \eqref{e:U(s)}, it follows from the first case in \eqref{e:tha1} (where we have substituted $k$ by $2k$) that
	\begin{equation*}
		(-\Delta)_s^{1/2}U(s) \approx \sum_{k = -\lfloor N/2\rfloor}^{\lceil N/2\rceil-1}\hat U(k)(-\Delta)_s^{1/2}e^{i2ks} = \frac2L\sin^2(s)\sum_{k = -\lfloor N/2\rfloor}^{\lceil N/2\rceil-1}|k|\hat U(k)e^{i2ks}.
	\end{equation*}
	Evaluating it at $s = s_j = \pi(2j+1)/(2N)$:
	\begin{equation}
		\label{e:fraclapeven}
		(-\Delta)_s^{1/2}U(s_j) \approx \frac2L\sin^2(s_j)\sum_{k = -\lfloor N/2\rfloor}^{\lceil N/2\rceil-1}\left[e^{i\pi k/N}|k|\hat U(k)\right]e^{i\pi2jk/N}.
	\end{equation}
	From an implementational point of view in Matlab, note that the commands \verb"fft" and \verb"ifft" match exactly \eqref{e:FFT} and \eqref{e:IFFT}. Therefore, if the variable \verb"u" stores $\{U(s_0), \ldots, U(s_{N-1})\}$, we must multiply the FFT of \verb"u" by $e^{-ik\pi/N}/N$ to approximate $\hat U(k)$, then multiply each $\hat U(k)$ by $|k|$, and also by $Ne^{ik\pi/N}$, before computing the IFFT; finally, we multiply the result by the factor outside the sum, namely $2\sin^2(s_j)/L$. Therefore, $e^{-ik\pi/N}/N$ and $Ne^{ik\pi/N}$ cancel each other, and the actual code is reduced to a few lines:
	
	\lstinputlisting[language=Matlab, basicstyle={\footnotesize\ttfamily}, caption = {Half Laplacian of $u(x)$, when $U(s)$ is periodic of period $\pi$}]{fraclap1aeven.m}
	
	In this example, we have approximated numerically the half Laplacian of $u(x) = (1+x^4)^{-1}$, whose exact expression is given by
	\begin{align}
		\label{e:fraclap11x4}
		(-\Delta)^{1/2}\frac{1}{1+x^4} = \frac{(1-x^2)(1+4x^2+x^4)}{\sqrt2(1+x^4)^2}.
	\end{align}
	This can be deduced by differentiating the Hilbert transform of $u(x) = (1+x^4)^{-1}$ (see, e.g., \cite{weideman1995}):
	$$
	\mathcal H\left(\frac{1}{1+x^4}\right) = \frac{x(1+x^2)}{\sqrt2(1+x^4)},
	$$
	or, with minimal rewriting, by means of  Mathematica \cite{mathematica}, using \eqref{e:fraclaplmathematica}:
	\begin{verbatim}
		u[x_] = 1/(1 + x^4); Assuming[Im[x] == 0, 
		Integrate[(u'[x - y] - u'[x + y])/y, {y, 0, Infinity}]/Pi]
	\end{verbatim}
	Moreover, it is possible to obtain \eqref{e:fraclap11x4} by using complex variable techniques, as is shown in Appendix \ref{s:appendix}.
	
	We have taken $L=1.1$, and $N = 10000019$, i.e., the first prime number larger than $10^7$. Then, the code requires $4.34$ seconds to execute and the error is $1.6542\times10^{-14}$. Evidently, whether $N$ is a prime number or not is irrelevant from the point of view of the algorithm itself. However, since the lion's share of the computational cost lies on the FFT (which, in the case of Matlab, uses the FFTW library \cite{FFT}), and it is a well known fact that the FFT is especially efficient when the size of the data is a product of powers of small prime numbers, we have deemed interesting to see how our codes perform when taking large prime numbers. In this regard, the results show that, under Matlab, $N$ does not need to have any special property, and, indeed, the program is able to work efficiently with extremely large values of $N$. Nevertheless, to get a more accurate idea of the cost of invoking the FFT with respect to the total cost, we have modified slightly the code, by adding a variable \verb|tFFT| that stores the times. Then, since we are interested in obtaining only the execution time of the \verb|fft| and \verb|ifft| commands, we do not measure the cost of, e.g., the whole line \verb|Fu_num=(2*sin(s).^2/L).*ifft(abs(k).*u_);|, but rather store \verb|abs(k).*u_| in a variable, e.g., \verb|aux_=abs(k).*u_|; and then measure the cost of computing the IFFT of \verb|aux_|, by typing \verb|tFFT=tFFT-toc; aux=ifft(aux_); tFFT=tFFT+toc;|. In this way, we conclude that the calls to the commands \verb|fft| and \verb|ifft| require $3.38$ seconds, i.e., approximately $77\%$ of the total time, and the non-FFT parts, $0.96$ seconds.
	
	On the other hand, it is true that, when $N$ is, e.g., a power of two, the FFT executes faster, which translates into a globally smaller elapsed time. For instance, if we take, e.g., $N = 2^{24} = 16777216$, i.e., a larger number, but which is a power of $2$, then only $2.11$ seconds are required, and the error is $1.5321\times10^{-14}$. However, invoking \verb|fft| and \verb|ifft| requires now only $0.65$ seconds, i.e., approximately $30\%$ of the total time, and the non-FFT parts, $1.46$ seconds. Note also that, with respect to the previous experiment with $N = 10000019$, the time corresponding to the non-FFT parts has grown roughly in a proportional way, i.e., $16777216 / 10000019 = 1.67$, and $1.46 / 0.96 = 1.52$. Therefore, if multiple executions of the program are to be done, or if the codes are to be run in systems with a less tuned FFT implementation, it may be convenient to choose an $N$ which is the product of powers of small prime numbers. Note that, in any case, the accuracy does not degrade when $N$ is extremely large.
	
	Finally, observe that, if $u(x)$ is real, as in this example, we impose explicitly that $(-\Delta)^{1/2}u(x)$ is real, because the numerical approximation might introduce infinitesimal values in the imaginary part.
	
	\subsection{Case with $U(s)$ not (necessarily) periodic of period $\pi$} \label{s:implementationnotperiodic} The implementation of this case is more involved, and constitutes another central point of this paper. If $U(s)$ is not periodic of period $\pi$, we extend it to $s\in[0,2\pi]$; even if there are infinitely many ways of doing it, an even extension at $s = \pi$, i.e., such that $U(\pi + s) \equiv U(\pi - s)$ is usually enough (in Section \ref{s:experiments}, we will consider other kinds of extensions). Then, $U(s)$ can be expanded as a cosine series, and hence $u(x)$ can be expanded in terms of the rational Chebyshev functions $TB_k(x) =  T_k(x / \sqrt{x^2+L^2})$, where $T_k(x) = \cos(k\arccos(x))$ is the $k$th Chebyshev polynomial (see, e.g., \cite{Boyd1987}). On the other hand, in this paper, given the fact that we know how to compute $(-\Delta)_s^{1/2}e^{iks}$, we work with $s$, approximating $U(s)$ as
	\begin{align}
		\label{e:U(s)2}
		U(s) \approx \sum_{k = -N}^{N-1}\hat{U}(k)e^{iks}.
	\end{align}
	Then, we impose the equality at the nodes $s_j$ defined in \eqref{e:sj}, but taking $j\in\{0, \ldots, 2N-1\}$:
	\begin{align*}
		U(s_j) & \equiv \sum_{k=-N}^{N-1}\hat{U}(k)e^{iks_j} = \sum_{k=-N}^{N-1}\hat{U}(k)e^{ik\pi(2j+1)/(2N)}
		\cr
		& = \sum_{k=0}^{N-1}\left[\hat{U}(k)e^{ik\pi/(2N)}\right]e^{2ijk\pi/(2N)}
		\cr
		& \qquad + \sum_{k=N}^{2N-1}\left[\hat{U}(k-2N)e^{i(k-2N)\pi/(2N)}\right]e^{2ijk\pi/(2N)},
	\end{align*}
	so the values $\{U(s_j)\}$ are now the inverse discrete Fourier transform of
	\begin{align*}
		\Big\{2N\hat U(0), 2Ne^{i\pi/(2N)}\hat{U}(1), \ldots, & 2Ne^{i(N-1)\pi/(2N)}\hat{U}(N-1),
		\cr
		& 2Ne^{-iN\pi/(2N)}\hat{U}(-N), \ldots, 2Ne^{-i\pi/(2N)}\hat{U}(-1)\Big\};
	\end{align*}
	and, conversely,
	\begin{equation*}
		\hat{U}(k) \equiv \frac{e^{-ik\pi/(2N)}}{2N}\sum_{j=0}^{2N-1}U(s_{j})e^{-2ijk\pi/(2N)}.
	\end{equation*}
	On the other hand, in order to approximate $(-\Delta)_s^{1/2}U(s)$, we apply $(-\Delta)_s^{1/2}$ to \eqref{e:U(s)2}, to get:
	\begin{align}
		\label{e:deltas12sum}
		(-\Delta)_s^{1/2}U(s) \approx \sum_{k = -N}^{N-1}\hat{U}(k)(-\Delta)_s^{1/2}e^{iks}.
	\end{align}
	Then, we decompose the sum, by distinguishing between odd and even positive and even negative values of $k$,  for which we observe that
	$$
	\{-N, \ldots, N-1\} = \left[\bigcup_{k = -\lfloor N/2\rfloor}^{\lceil N/2\rceil-1}(2k)\right] \bigcup \left[\bigcup_{k = 0}^{\lfloor N/2\rfloor-1}(2k+1)\right] \bigcup \left[\bigcup_{k = 0}^{\lceil N/2\rceil-1}(-2k-1)\right],
	$$
	and this is valid for any natural number $N$. From an implementation point of view, if $U(s)$ is regular enough, note that, although we will work with the whole set $k\in\{-N, \ldots, N-1\}$, it is harmless to impose $\hat U(-N)\equiv 0$ (i.e., we take $k\in\{-N+1, \ldots, N-1\}$ in \eqref{e:deltas12sum}), or to consider $(\hat U(-N) + \hat U(N)) / 2$ instead of $\hat U(-N)$, etc.
	
	Taking into account the previous arguments, \eqref{e:deltas12sum} evaluated at $s = s_j = \pi(2j+1)/(2N)$ becomes
	\begin{align*}
		(-\Delta)_s^{1/2}U(s_j) & \approx \sum_{k = -\lfloor N/2\rfloor}^{\lceil N/2\rceil-1}\hat{U}(2k)(-\Delta)_s^{1/2}e^{i2ks_j} + \sum_{k = 0}^{\lfloor N/2\rfloor-1}\hat{U}(2k+1)(-\Delta)_s^{1/2}e^{i(2k+1)s_j}
		\cr
		& \qquad  + \sum_{k = 0}^{\lceil N/2\rceil-1}\hat{U}(-2k-1)(-\Delta)_s^{1/2}e^{-i(2k+1)s_j} = [\textrm{III}] + [\textrm{IV}] + [\textrm{V}].
	\end{align*}
	Moreover, from \eqref{e:fraclaps}, $(-\Delta)_s^{1/2}U(s)$ is periodic of period $\pi$, even if $U(s)$ is not, so we just need to compute $(-\Delta)_s^{1/2}U(s_j)$ for $s_j\in(0,\pi)$, i.e., for $j\in\{0, \ldots, N-1\}$, and, when $s_j\in(\pi,2\pi)$, i.e., $j\in\{N, \ldots, 2N-1\}$, use that $(-\Delta)_s^{1/2}U(s_j) = (-\Delta)_s^{1/2}U(s_j - \pi) = (-\Delta)_s^{1/2}U(s_{j-N})$. Bearing in mind this, $[\textrm{III}]$ is identical to \eqref{e:fraclapeven}, except that we have $\hat U(2k)$ instead of $\hat U(k)$, and it follows again after replacing $k$ with $2k$ in the first case in \eqref{e:tha1}:
	\begin{equation*}
		[\textrm{III}] = \frac2L\sin^2(s_j)\sum_{k = -\lfloor N/2\rfloor}^{\lceil N/2\rceil-1}\left[e^{i\pi k/N}|k|\hat U(2k)\right]e^{i\pi2jk/N}.
	\end{equation*}
	With respect to $[\textrm{IV}]$, and $[\textrm{V}]$, we use \eqref{e:sumeiks} and \eqref{e:sumeiksneg} in Lemma \ref{lemma:fast}, respectively. Let us consider $[\textrm{IV}]$ first; taking $M = \lfloor N/2\rfloor-1$, $a_l = \hat U(2l+1)$ in \eqref{e:sumeiks}, and replacing $l$ by $k$:
	\begin{align*}
		[\textrm{IV}] & = {-\frac{2i}{L\pi}}\sum_{k = 0}^{\lfloor N/2\rfloor-1}\frac{\hat U(2k+1)}{2k+3} - \frac{2i}{L\pi}\Big[\cos(s_j) + \sin^2(s_j)\ln\left(\cot\left(\frac {s_j}2\right)\right)\Big]
		\cr
		& \qquad \times\sum_{k = 0}^{\lfloor N/2\rfloor-1}(2k+1)\hat U(2k+1)e^{i(2k+1)s_j} + \frac{2i}{L\pi}\sum_{k = 0}^{\lfloor N/2\rfloor-1}(b^+\ast c^+)_ke^{i2ks_j},
	\end{align*}
	where
	$$
	(b^+\ast c^+)_k = \sum_{n = 0}^{P}b_n^+c_{k-n}^+,
	$$
	with $P$ being a natural number, such that $P \ge N$ (strictly speaking, it is enough here that $P \ge 2(\lfloor N/2\rfloor-1)+1 = 2\lfloor N/2\rfloor-1$), and $b_k^+$ and $c_k^+$ being given respectively by
	\begin{equation*}
		b_k^+ =
		\left\{\begin{aligned}
			& (8k+4)\hat U(2k+1), & & 0 \le k \le \lfloor N/2\rfloor-1,
			\\
			& 0, & & \lfloor N/2\rfloor \le k \le P - 1,
		\end{aligned}
		\right.
	\end{equation*}
	and
	\begin{equation*}
		c_k^+ =
		\left\{\begin{aligned}
			& \frac13, & & k = 0, \\
			& 0, & & 1 \le k \le P-\lfloor N/2\rfloor, \\
			& \frac{1}{(2(k-P)-3)(2(k-P)-1)(2(k-P)+1)}, & & P - \lfloor N/2\rfloor + 1 \le k \le P-1.
		\end{aligned}\right.
	\end{equation*}
	Likewise, taking $M = \lceil N/2\rceil-1$, $a_l = \hat U(-2l-1)$ in \eqref{e:sumeiksneg}, and replacing again $l$ by $k$, $[\textrm{V}]$ becomes
	\begin{align*}
		[\textrm{V}] & = \frac{2i}{L\pi}\sum_{k = 0}^{\lceil N/2\rceil-1}\frac{\hat U(-2k-1)}{2k+3} + \frac{2i}{L\pi}\Big[\cos(s_j) + \sin^2(s_j)\ln\left(\cot\left(\frac {s_j}2\right)\right)\Big]
		\cr
		& \qquad \times\sum_{k = 0}^{\lceil N/2\rceil-1}(2k+1)\hat U(-2k-1)e^{-i(2k+1)s_j} - \frac{2i}{L\pi}\sum_{k = 0}^{\lceil N/2\rceil-1}(b^-\ast c^-)_ke^{-i2ks_j},
	\end{align*}
	where
	$$
	(b^-\ast c^-)_k = \sum_{n = 0}^{P}b_n^-c_{k-n}^-,
	$$
	with $P$ being again a natural number, such that $P \ge N$ (strictly speaking, it is enough here that $P \ge 2(\lceil N/2\rceil-1)+1 = 2\lceil N/2\rceil-1$), and $b_k^-$ and $c_k^-$ being given respectively by
	\begin{equation*}
		b_k^- =
		\left\{\begin{aligned}
			& (8k+4)\hat U(-2k-1), & & 0 \le k \le \lceil N/2\rceil-1,
			\\
			& 0, & & \lfloor N/2\rfloor \le k \le P - 1,
		\end{aligned}
		\right.
	\end{equation*}
	and
	\begin{equation*}
		c_k^- =
		\left\{\begin{aligned}
			& \frac13, & & k = 0, \\
			& 0, & & 1 \le k \le P - \lceil N/2\rceil, \\
			& \frac{1}{(2(k-P)-3)(2(k-P)-1)(2(k-P)+1)}, & &  P - \lceil N/2\rceil + 1 \le k \le P-1.
		\end{aligned}\right.
	\end{equation*}
	Putting $[\textrm{IV}]$ and $[\textrm{V}]$ together:
	\begin{align*}
		& [\textrm{IV}] + [\textrm{V}] = {-\frac{2i}{L\pi}}\left[\sum_{k = 0}^{\lfloor N/2\rfloor-1}\frac{\hat U2k+1)}{2k+3} - \sum_{k = 0}^{\lceil N/2\rceil-1}\frac{\hat U(-2k-1)}{2k+3} \right]
		\cr
		&  \quad - \frac{2i}{L\pi}\Big[\cos(s_j) + \sin^2(s_j)\ln\left(\cot\left(\frac {s_j}2\right)\right)\Big]
		\cr
		& \quad \ \ \times\left[\sum_{k = 0}^{\lfloor N/2\rfloor-1}(2k+1)\hat U(2k+1)e^{i(2k+1)s_j} - \sum_{k = 0}^{\lceil N/2\rceil-1}(2k+1)\hat U(-2k-1)e^{-i(2k+1)s_j}\right]
		\cr
		& \quad + \frac{2i}{L\pi}\left[\sum_{k = 0}^{\lfloor N/2\rfloor-1}(b^+\ast c^+)_ke^{i2ks_j} - \sum_{k = 0}^{\lceil N/2\rceil-1}(b^-\ast c^-)_ke^{-i2ks_j}\right]
		\cr
		& \quad = [\mathrm{VI}] + [\mathrm{VII}] + [\mathrm{VIII}].
	\end{align*}
	From an implementation point of view, $[\mathrm{VI}]$ can be expressed in a more compact way:
	\begin{equation*}
		[\mathrm{VI}] = {-\frac{2i}{L\pi}}\sum_{k = -\lceil N/2\rceil}^{\lfloor N/2\rfloor-1}\frac{\sgn(2k+1)\hat U(2k+1)}{|2k+1|+2}.
	\end{equation*}
	With respect to $[\mathrm{VII}]$, we rewrite the bracketed expression:
	\begin{align*}
		& \sum_{k = 0}^{\lfloor N/2\rfloor-1}(2k+1)\hat U(2k+1)e^{i(2k+1)s_j} - \sum_{k = 0}^{\lceil N/2\rceil-1}(2k+1)\hat U(-2k-1)e^{-i(2k+1)s_j}
		\cr
		& \qquad = \sum_{k = -\lceil N/2\rceil}^{\lfloor N/2\rfloor-1}(2k+1)\hat U(2k+1)e^{i(2k+1)\pi(2j+1)/(2N)}.
		\cr
		& \qquad = e^{i\pi(2j+1)/(2N)}\sum_{k = 0}^{\lfloor N/2\rfloor-1}\left[e^{ik\pi/N}(2k+1)\hat U(2k+1)\right]e^{i\pi2jk/N}
		\cr
		& \qquad \quad + e^{i\pi(2j+1)/(2N)}\sum_{k = \lfloor N/2\rfloor}^{N-1}\left[e^{i(k-N)\pi/N}(2(k-N)+1)\hat U(2(k-N)+1)\right]e^{i\pi2jk/N},
	\end{align*}
	i.e., we have the inverse discrete Fourier transform of
	\begin{align*}
		\Big\{N\hat U(1), & 3Ne^{i\pi/N}\hat{U}(3), \ldots, (2\lfloor N/2\rfloor-1)Ne^{i(\lfloor N/2\rfloor-1)\pi/N}\hat{U}(2\lfloor N/2\rfloor-1),
		\cr
		& (-2\lceil N/2\rceil + 1)Ne^{-i\lceil N/2\rceil\pi/N}\hat{U}(-2\lceil N/2\rceil + 1), \ldots, -Ne^{-i\pi/N}\hat{U}(-1)\Big\},
	\end{align*}
	and multiply the result by $e^{is_j} = e^{i\pi(2j+1)/(2N)}$, for $j \in\{0, \ldots, N-1\}$, and then by
	$$
	- \frac{2i}{L\pi}\Big[\cos(s_j) + \sin^2(s_j)\ln\left(\cot\left(\frac {s_j}2\right)\right)\Big].
	$$
	Finally, with respect to $[\mathrm{VIII}]$,
	\begin{align*}
		[\mathrm{VIII}] & = \frac{2i}{L\pi}\left[(b^+\ast c^+)_0 - (b^-\ast c^-)_0\right] + \frac{2i}{L\pi}\sum_{k = 1}^{\lfloor N/2\rfloor-1}(b^+\ast c^+)_ke^{i2ks_j}
		\cr
		& \qquad  - \frac{2i}{L\pi}\sum_{k = 1}^{\lceil N/2\rceil-1}(b^-\ast c^-)_ke^{-i2ks_j}
		\cr
		& = \frac{2i}{L\pi}\left[(b^+\ast c^+)_0 - (b^-\ast c^-)_0\right] + \frac{2i}{L\pi}\sum_{k = 1}^{\lfloor N/2\rfloor-1}\left[e^{ik\pi/N}(b^+\ast c^+)_k\right]e^{i\pi2jk/N}
		\cr
		& \qquad - \frac{2i}{L\pi}\sum_{k = \lfloor N/2\rfloor+1}^{N-1}\left[e^{i(k-N)\pi/N}(b^-\ast c^-)_{N-k}\right]e^{i\pi2jk/N},
	\end{align*}
	i.e., we compute the inverse discrete Fourier transform of
	\begin{align*}
		& \Big\{N\left[(b^+\ast c^+)_0 - (b^-\ast c^-)_0\right],
		\cr
		& \qquad Ne^{i\pi/N}(b^+\ast c^+)_1, \ldots, Ne^{i(\lfloor N/2\rfloor-1)\pi/N}(b^+\ast c^+)_{\lfloor N/2\rfloor-1}, 0,
		\cr
		& \qquad -Ne^{i(\lceil N/2\rceil-1)\pi/N}(b^+\ast c^-)_{\lceil N/2\rceil-1}, \ldots, -Ne^{i\pi/N}(b^+\ast c^-)_1\Big\},
	\end{align*}
	and multiply the result by $2i / (L\pi)$.
	
	From an implementational point of view in Matlab, we have taken $P = N$ in the computations of $(b^+\ast c^+)_k$ and $(b^-\ast c^-)_k$. We have defined different sets of frequencies, namely \verb"k", \verb"kodd", \verb"ka", \verb"kb", \verb"koddposa" and \verb"koddposb", which results in a more compact code. Moreover, we have omitted the division by $2N$ when computing \verb"u2_" and the final multiplication by $2N$ in \verb"Fu_num", because the two factors cancel each other. Except from that, the code matches carefully the steps explained in this section.
	
	\lstinputlisting[label=code:nonperiodic,language=Matlab, basicstyle={\footnotesize\ttfamily}, caption = {Half Laplacian of $u(x)$, when $U(s)$ is not necessarily periodic of period $\pi$}]{fraclap1a.m}\label{p:applapfrac}
	
	To make a fair comparison, even if this section is devoted to the case when $U(s)$ is not periodic of period $\pi$, we have consider the same function and parameters as in the previous example, namely $u(x) = (1 + x^4)^{-1}$, $L=1.1$ and $N = 10000019$ (in the next section, we will test the code also with functions $U(s)$ that are not periodic of period $\pi$). The code takes now $17.82$ seconds to execute, and the error is $1.6986\times10^{-14}$. Furthermore, we have slightly adapted the code, as explained at the end of Section \ref{s:periodic}, in order to measure the time required by \verb|fft| and \verb|ifft| , which is of $14.65$ seconds, i.e., approximately $82\%$ of the total time, so the cost of the non-FFT part is of only $3.17$ seconds. On the other hand, when  $N = 16777216$, it takes only $7.88$ seconds to execute, and the error is $1.5543\times10^{-14}$. Moreover, in this case, the calls to \verb|fft| and \verb|ifft| require $3.68$ seconds, i.e., approximately $47\%$ of the total time, and the non-FFT parts $4.20$ seconds. Therefore, the results show again that the accuracy does not degrade when $N$ is extremely large, but the total cost is largely determined by the FFT.
	
	\section{Numerical experiments}\label{s:experiments}
	
	In order to test our method, we have approximated numerically the half Laplacian of functions having different types of decay as $x\to\pm\infty$ and for different values of $N$ and $L$. We measure the accuracy of the results by means of the discrete $L^\infty$-norm of the diference between $(-\Delta)^{1/2}u(x)$ and its respective numerical approximation, which we denote $(-\Delta)_{num}^{1/2}u(x)$:
	\begin{align*}
		\left\|(-\Delta)^{1/2}u(x) - (-\Delta)_{num}^{1/2}u(x)\right\|_\infty & = \max_{j}\left|(-\Delta)^{1/2}u(x_j) - (-\Delta)_{num}^{1/2}u(x_j)\right|
		\cr
		& = \max_{j}\left|(-\Delta)^{1/2}_sU(s_j) - (-\Delta)_{s,num}^{1/2}U(s_j)\right|.
	\end{align*}
	With respect to $L$, let us remark that its choice is a delicate one, because, as said in, e.g., \cite{cayamacuestadelahoz2021}, although there are some theoretical results \cite{Boyd1982}, an adequate election of $L$ does depend on many factors, such as the number of points, class of functions, type of problem, etc. However, a good working rule of thumb seems to be that the absolute value of a given function at the extreme grid points is smaller than a given threshold.
	
	\subsection{Examples with $U(s)$ periodic of period $\pi$}
	
	\label{s:numericalperiodic}
	
	As we have commented in Section~\ref{s:periodic}, the implementation of this case can be done in a straightforward way, by just considering $e^{iks}$ with $k$ even. However, extending the domain of $U(s)$ to $[0, 2\pi]$ may be useful in some cases, depending on how $U(s)$ is defined on $[\pi, 2\pi]$ (e.g., by using an odd or an even extension at $s = \pi$). Therefore, we have considered $s\in[0,2\pi]$ in all the numerical experiments in this section. On the other hand, in this paper, unless otherwise indicated, we plot in semilogarithmic scale the $L^\infty$-norm of the errors, with an abscissa range between $10^{-17}$ and $10$, which facilitates the comparison of different numerical experiments.
	
	\subsubsection{$u(x) = (1 + x^4)^{-1}$}
	
	This is the function that we have considered in the Matlab codes in Section \ref{s:implementation}; recall that its half Laplacian is given by \eqref{e:fraclap11x4}. We have considered $s\in[0,2\pi]$, extending $U(s)$ in two different ways: an even extension at $s = \pi$, which is done by typing \verb"u2=[u,u(end:-1:1)];", as in the Matlab code in Section \ref{s:implementationnotperiodic}, and an odd extension at $s = \pi$, for which we type \verb"u2=[u,-u(end:-1:1)];". Note that $U(s) = (1 + L^4\cot^4(s))^{-1}$, so it is straightforward to check that an even extension yields $U(s)\in\mathcal C^\infty(\mathbb R)$, whereas an odd extension yields $U(s)\in\mathcal C^3(\mathbb R)$.
	
	In Figure \ref{f:fraclap11x4}, we show the errors for $L \in\{0.01, 0.02, \ldots, 10\}$ and $N \in \{2^2, 2^3, \ldots, 2^{13}\}$; the results on the left-hand side correspond to an even extension, and those on the right-hand side, to an odd one. Note that, for this function, an even extension of $U(s)$ at $s = \pi$ is also periodic of period $\pi$, so considering $U(s)$ only on $[0,\pi]$ yields exactly the same results up to infinitesimal rounding errors, and the resulting graphic is visually indistinguishable from the left-hand side of Figure \ref{f:fraclap11x4}.
	\begin{figure}[!htbp]
		\centering
		\includegraphics[width=0.5\textwidth, clip=true]{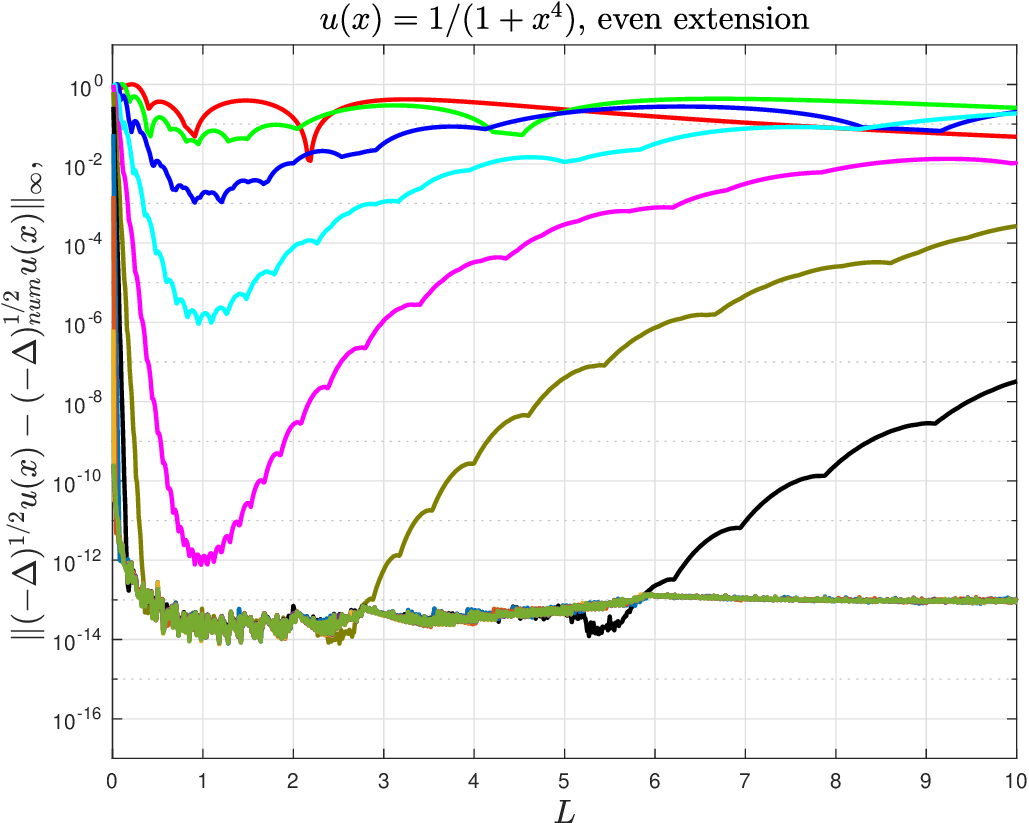}\includegraphics[width=0.5\textwidth, clip=true]{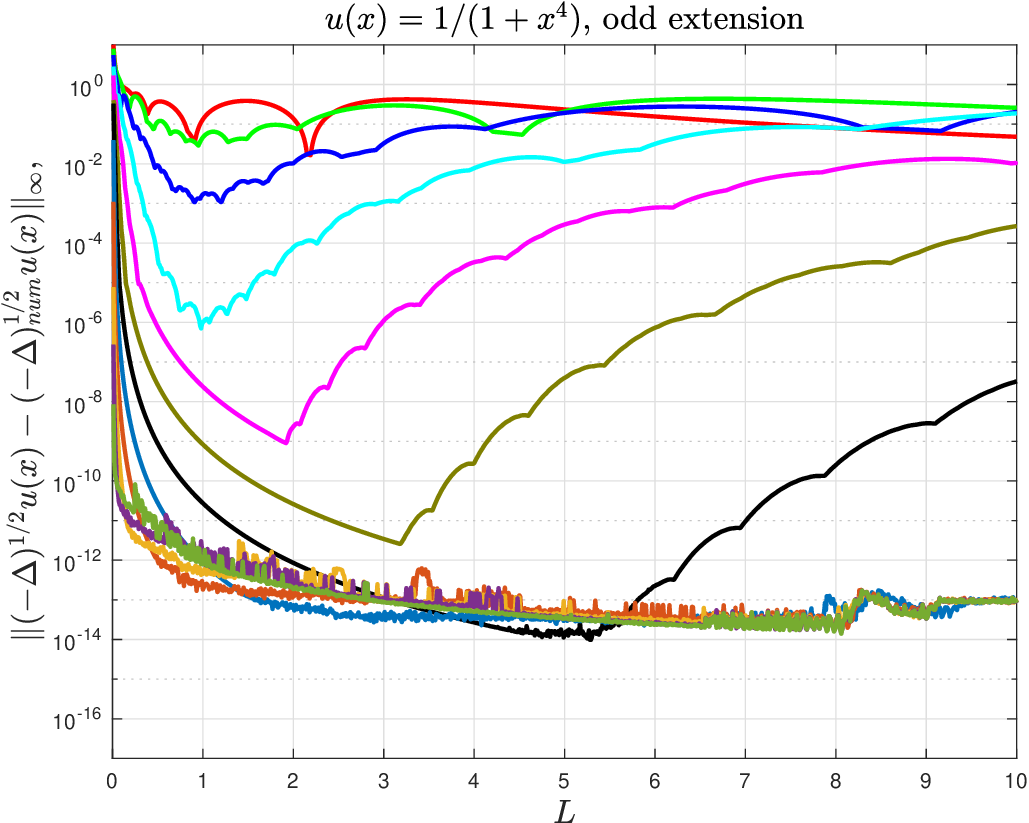}
		\includegraphics[width=\textwidth, clip=true]{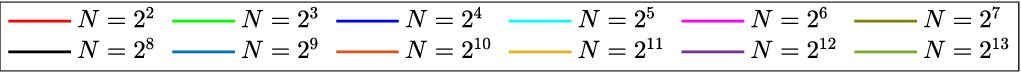}
		\caption{Errors in the numerical approximation of $(-\Delta)^{1/2}u(x)$, for $u(x) = (1+x^4)^{-1}$, taking $N \in\{2^2, 2^3, \ldots, 2^{13}\}$ and $L\in\{0.01, 0.02, \ldots, 10\}$. Left: we have considered an even extension at $s = \pi$. Right: we have considered an odd extension at $s = \pi$.}
		\label{f:fraclap11x4}
	\end{figure}
	The numerical results reveal that, although for some values of $N$ (especially $N = 64$ and $N = 128$) the even extension works better, for larger $N$ there are no remarkable differences, and the highest possible precision can be reached in both cases for a large range of values of $L$.
	
	\subsubsection{$u(x) = (1 + x^2)^{-1/2}$} This example clearly illustrates why it can be advantageous to work with $s\in[0,2\pi]$, even when $u \in L^2(\mathbb R)$ (and, hence, $U(s)$ is periodic of period $\pi$).
	
	In order to compute $(-\Delta)^{1/2}(1 + x^2)^{-1/2}$, we make $k = 1$ in \eqref{e:thc} and take the imaginary part of the resulting expression, getting
	\begin{equation*}
		(-\Delta)^{1/2} \frac{1}{(1 + x^2)^{1/2}} = \frac{2(1 + x^2)^{1/2} - 2x\arg\sinh(x)}{\pi(1 + x^2)^{3/2}}.
	\end{equation*}
	As in the previous example, we have considered both an even and an odd extension for $U(s) = (1 + L^2\cot^2(s))^{-1/2}$ at $s = \pi$. In this case, an even extension yields only a globally continuous function $U(s)\in\mathcal C^0(\mathbb R)$, whereas an odd extension yields $U(s)\in\mathcal C^\infty(\mathbb R)$. Note also that the even extension of $U(s)$ at $s = \pi$ is again also periodic of period $\pi$, so considering $U(s)$ only on $[0,\pi]$ yields exactly the same results. In Figure \ref{f:fraclap1sqrt1x2}, we show the errors for $N \in\{2^2, \ldots, 2^{13}\}$. The left-hand side corresponds to the even extension, where we have taken $L \in\{0.01, 0.02, \ldots, 400\}$, and the right-hand side to the odd one, where we have taken $L \in\{0.01, 0.02, \ldots, 10\}$. From the results, we can see that, even if it is possible to reach the highest accuracy with both extensions, the even extension is much less adequate here, because it requires much larger values of $N$ and of $L$, and the optimum value of $L$ (which grows with $N$) must be chosen quite carefully. On the other hand, when $L = 1$, an odd extension at $s = \pi$ corresponds exactly to $\sin(s) = (e^{is} - e^{-is}) / 2i$, so the numerical approximation is exact for all $N$, up to infinitesimally small rounding errors, but, when $L \not=1$, an odd extension is also convenient, and the best results can be indeed achieved for a large range of values of $L$.
	\begin{figure}[!htbp]
		\centering
		\includegraphics[width=0.5\textwidth, clip=true]{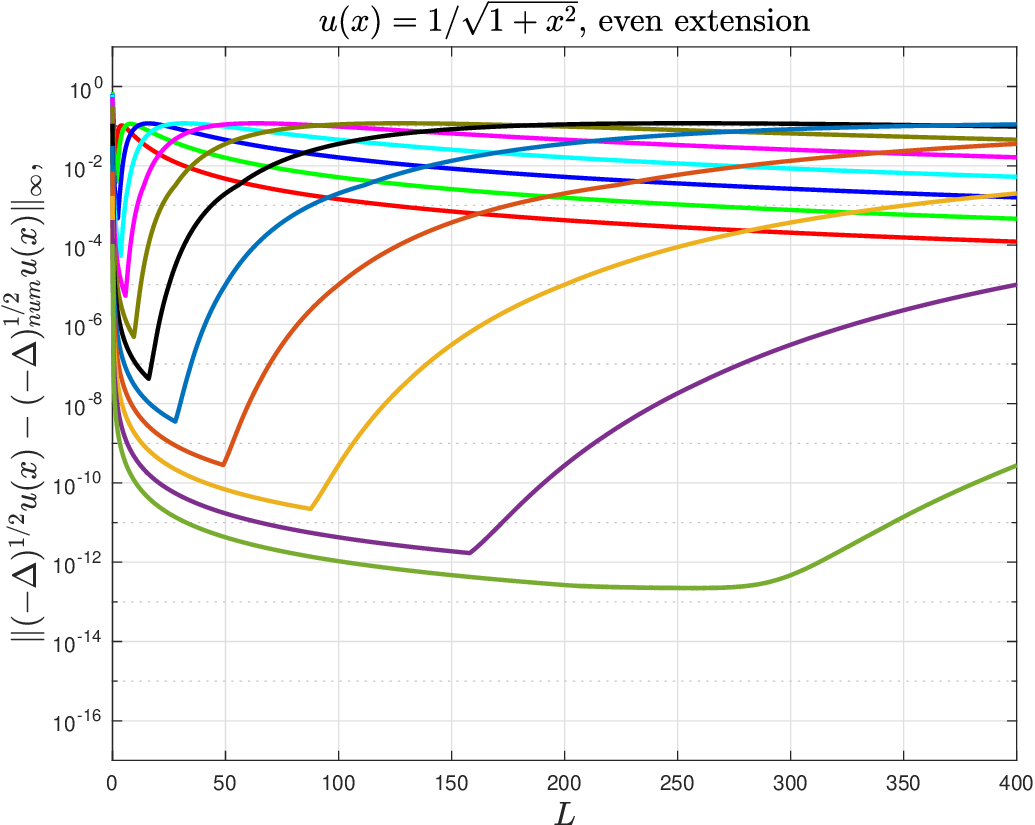}\includegraphics[width=0.5\textwidth, clip=true]{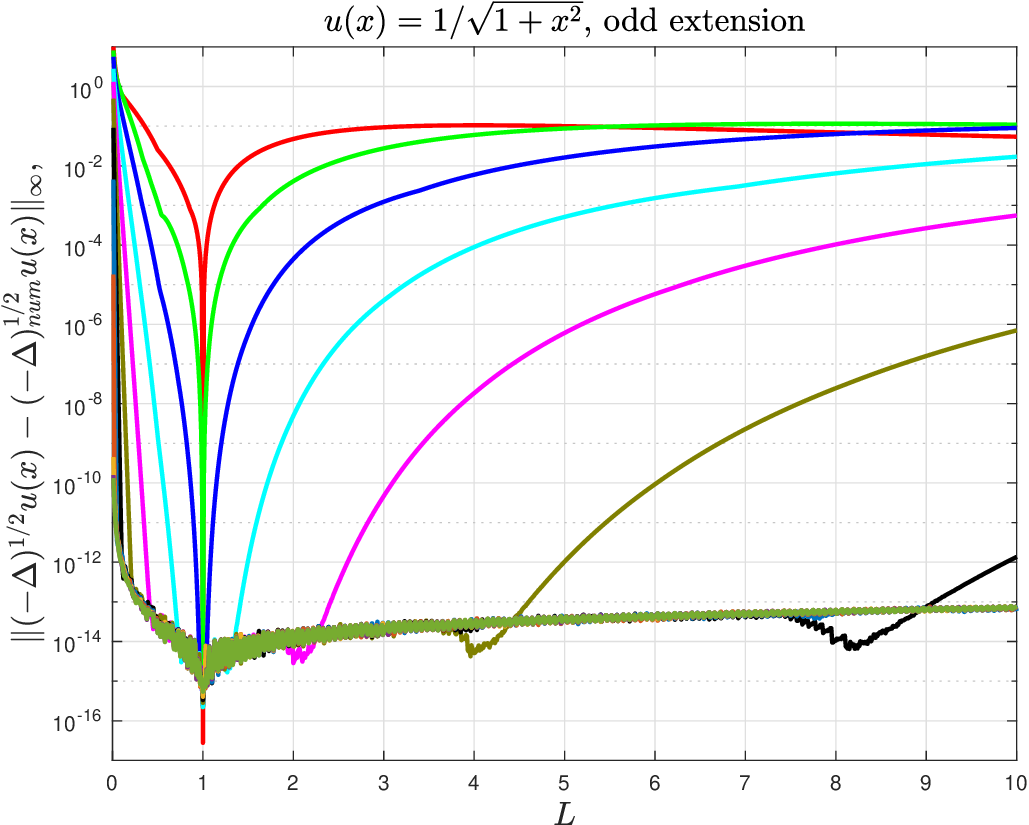}
		\includegraphics[width=\textwidth, clip=true]{legendnumberofpointsN.eps}
		\caption{Errors in the numerical approximation of $(-\Delta)^{1/2}u(x)$, for $u(x) = (1+x^2)^{-1/2}$, taking $N \in\{2^2, 2^3, \ldots, 2^{13}\}$. Left: we have considered an even extension at $s = \pi$, and taken $L\in\{0.01, 0.02, \ldots, 400\}$. The graphic corresponding to $U(s)$ with $s\in[0,\pi]$ and no extension is identical. Right: we have considered an odd extension at $s = \pi$, and taken $L\in\{0.01, 0.02, \ldots, 10\}$.}
		\label{f:fraclap1sqrt1x2}
	\end{figure}
	
	In general, if $U(s)\in\mathcal C^0([0,\pi])$, an even extension at $s = \pi$ yields a periodic function on $s\in[0,2\pi]$ that is globally at least $\mathcal C^0(\mathbb R)$. Then, it is well known that if a periodic function $U(s)\in\mathcal C^r(\mathbb R)$, then its Fourier coefficients, for a fixed $L$, decay asymptotically as powers of $r + 2$. For instance, in the even extension in this example, the Fourier coefficients decay asymptotically in a quadratic way, so, for a fixed $L$, we can expect that the errors decay as $\mathcal O(N^{-2})$, for $N$ large enough. To illustrate this, in Figure \ref{f:fraclap1sqrt1x2LN}, we have plotted the decimal logarithm of $N$ versus the decimal logarithm of the errors corresponding to the even extension at $s = \pi$, for $N \in\{2, 3, \ldots, 10^4\}$ and $L \in\{2^{-1}, 2^0, \ldots, 2^{7}\}$. For a given $L$, which has its associated color, the corresponding errors form a curve that tends, as $N$ increases, to a straight line of slope approximately equal to $-2$.
	
	\begin{figure}[!htbp]
		\centering
		\includegraphics[width=0.5\textwidth, clip=true]{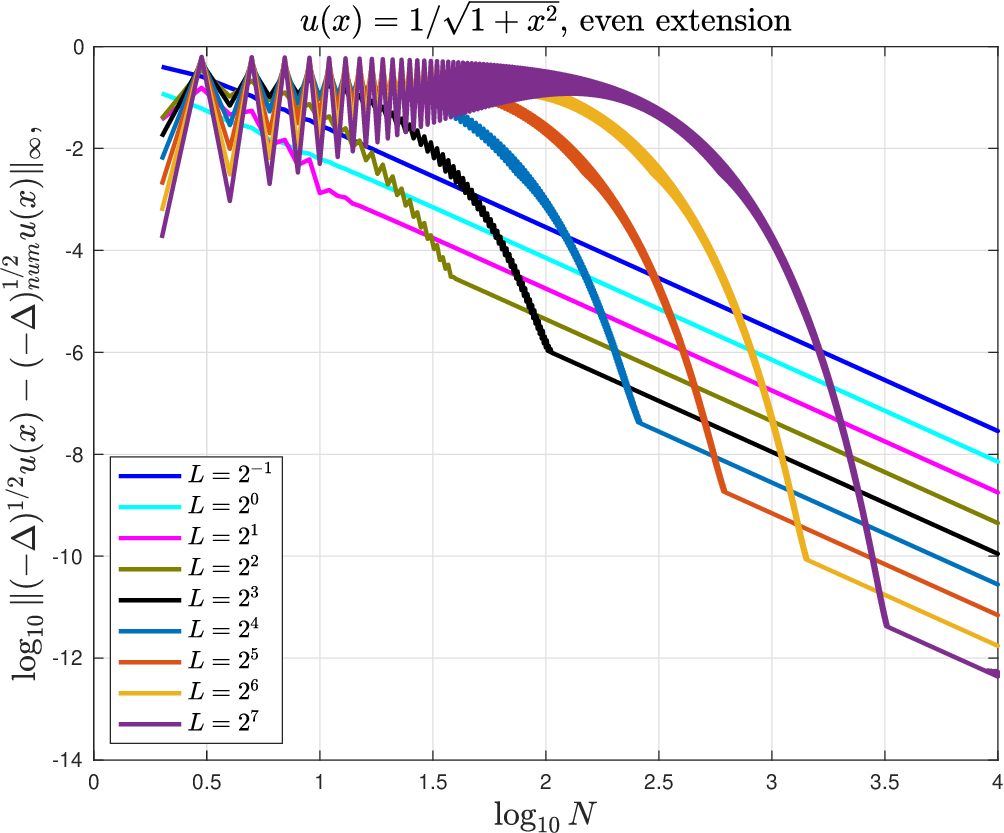}
		\caption{Errors in the numerical approximation of $(-\Delta)^{1/2}u(x)$, for $u(x) = (1+x^2)^{-1/2}$, considering an even extension at $s = \pi$, and taking $N \in\{2, 3, \ldots, 10^4\}$ and $L \in\{2^{-1}, 2^0, \ldots, 2^7\}$.}
		\label{f:fraclap1sqrt1x2LN}
	\end{figure}

	\subsection{Examples with $U(s)$ not periodic of period $\pi$}
	
	\label{s:numericalnotperiodic}
	
	In this case, it is necessary to extend $U(s)$ to $s\in[0, 2\pi]$, although we will study in Section \ref{s:nonperiodictoperiodic} some strategies to reduce the nonperiodic case to the periodic one.
	
	\subsubsection{$u(x) = \arctan(x)$} \label{s:atan} We know (see, e.g., \cite{weideman1995}) that 
	$$
	\mathcal H\left(\frac{1}{1+x^2}\right) = \frac{x}{1+x^2},
	$$
	so, bearing in mind that $\arctan'(x) = (1 + x^2)^{-1}$, it follows that
	\begin{align}
		\label{e:fraclapatan}
		(-\Delta)^{1/2}\arctan(x) = \frac{x}{1 + x^2}.
	\end{align}
	On the left-hand side of Figure \ref{f:fraclapatan}, we have considered an even extension of $U(s) = \arctan(L\cot(s))$ at $s = \pi$, taking $L\in\{0.01, 0.02, \ldots, 400\}$ and $N \in\{2^2, 2^3, \ldots, 2^{13}\}$. The resulting graphic is not identical to, but closely resembles the left-hand side of Figure \ref{f:fraclap1sqrt1x2}, because, again, $U(s)\in\mathcal C^0(\mathbb R)$. As can be seen, it is certainly possible to achieve the highest accuracy, although rather large values of $N$ and $L$ are required. Therefore, even if the even extension at $s = \pi$ still works, we can regard it as not ideal. Indeed, for instance, when $L = 1$, $\arctan(x) = \arctan(\cot(s)) = \pi/2 - s$, so we are indeed computing the Fourier expansion of $U(s) = |\pi - s| - \pi / 2\in\mathcal C^0(\mathbb R)$, whose Fourier coefficients decay only quadratically.
	
	\begin{figure}[!htbp]
		\centering
		\includegraphics[width=0.5\textwidth, clip=true]{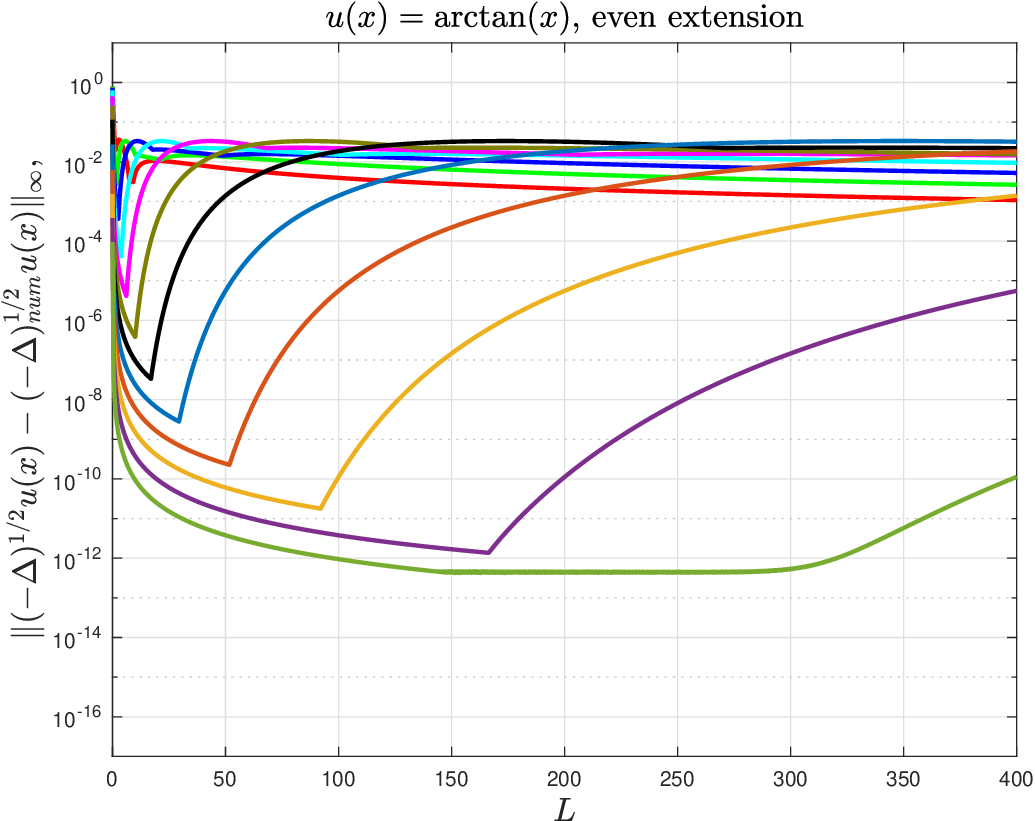}\includegraphics[width=0.5\textwidth, clip=true]{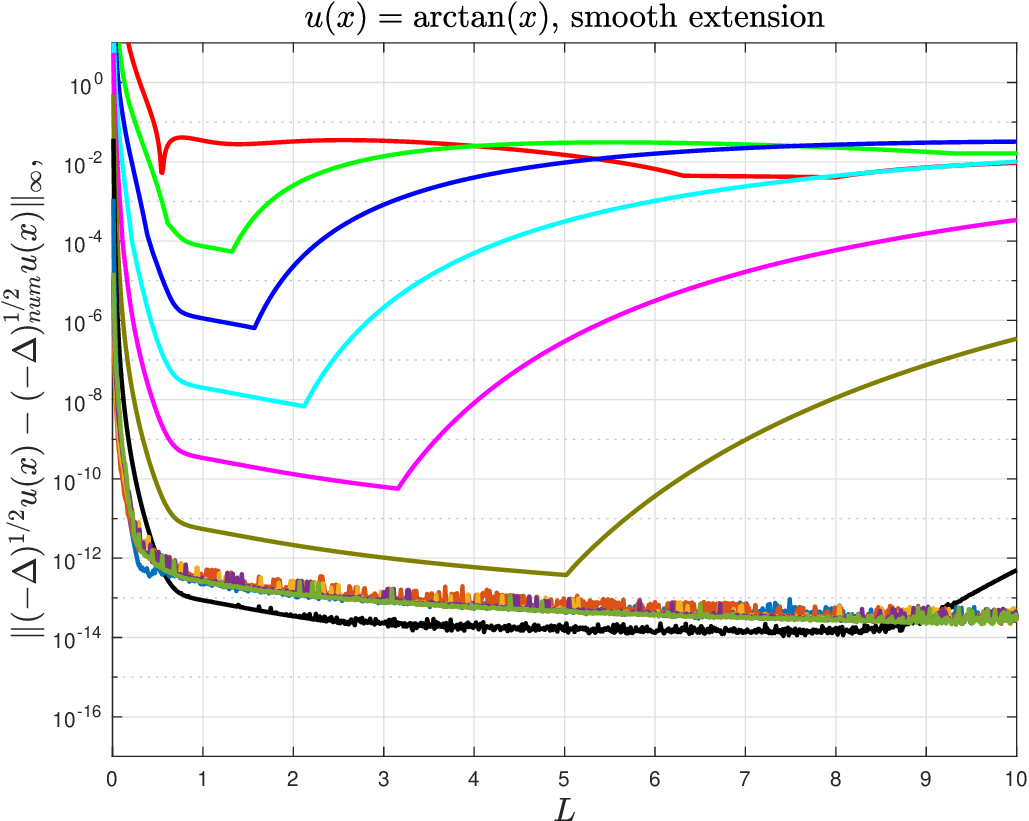}
		\includegraphics[width=\textwidth, clip=true]{legendnumberofpointsN.eps}
		\caption{Errors in the numerical approximation of $(-\Delta)^{1/2}u(x)$, for $u(x) = \arctan(x)$, taking $N \in\{2^2, 2^3, \ldots, 2^{13}\}$. Left: We have considered an even extension of $U(s)$ at $s = \pi$, and taken $L\in\{0.01, 0.02, \ldots, 400\}$. Right: we have considered the smooth extension given by \eqref{e:usmooth} for $s\in[\pi,2\pi]$, and taken $L\in\{0.01, 0.02, \ldots, 10\}$.}
		\label{f:fraclapatan}
	\end{figure}
	
	In order to improve the results, we have constructed a smooth extension from $s\in[0,\pi]$ to $s\in[0,2\pi]$. A detailed study on how to extend functions defined over half a period to the whole period lies beyond the scope of this paper. In our case, even if this can be done in different ways (see, e.g., \cite{Boyd2002,Boyd2006,Huybrechs2010,mortonsilverberg2009}), we have found that a simple trigonometric interpolant is enough. More precisely, we propose the following ansatz:
	\begin{equation*}
		U(s) =
		\left\{
		\begin{aligned}
			& \arctan(L\cot(s)), & & s\in[0,\pi]
			\cr
			& \sum_{k = 1}^{5}[\alpha_k\cos(ks) + \beta_k\sin(ks)], & & s\in(\pi,2\pi],
		\end{aligned}
		\right.
	\end{equation*}
	and the other coefficients $\alpha_k$, $\beta_k$ are determined in such a way that $U(s)$ is at least of class $\mathcal C^4(\mathbb R)$, which is achieved by imposing the following conditions:
	\begin{equation}
		\label{e:triginterp}
		\begin{aligned}
			U(\pi) & = -\frac{\pi}{2}, & U(2\pi) & \equiv U(0) = \frac{\pi}{2},
			\cr
			U'(\pi) & = -\frac{1}{L}, & U'(2\pi) & \equiv U'(0) = -\frac{1}{L},
			\cr
			U''(\pi) & = 0, & U''(2\pi) & \equiv U''(0) = 0,
			\cr
			U'''(\pi) & = -\frac2L + \frac2{L^3}, & U'''(2\pi) & \equiv U'''(0) = -\frac2L + \frac2{L^3},
			\cr
			U^{(4)}(\pi) & = 0, & U^{(4)}(2\pi) & \equiv U^{(4)}(0) = 0.
		\end{aligned}
	\end{equation}
	Then, it is not difficult to conclude that
	\begin{equation}
		\label{e:usmooth}
		U(s) =
		\left\{
		\begin{aligned}
			& \arctan(L\cot(s)), & & s\in[0,\pi]
			\cr
			& \frac{75\pi}{128}\cos(s) + \left(-\frac{3}{4L} + \frac{1}{12L^3}\right)\sin(2s) 
			\cr 
			& \quad - \frac{25\pi}{256}\cos(3s) + \left(\frac{1}{8L} - \frac{1}{24L^3}\right)\sin(4s) + \frac{3\pi}{256}\cos(5s), & & s\in(\pi,2\pi].
		\end{aligned}
		\right.
	\end{equation}
	Moreover, for all $m\in\mathbb N$,  $U^{(m)}(0^+) = U^{(m)}(\pi^-)$, but $U^{(5)}(0^+) = U^{(5)}(\pi^-) =  -16/L + 40/L^3 - 24/L^5$ and $U^{(5)}(\pi^+) = U^{(5)}(2\pi^-) =  104/L - 40/L^3$, so $U(s)\in\mathcal C^4(\mathbb R)$.
	
	On the right-hand side of Figure \ref{f:fraclapatan}, we have considered the smooth extension given by \eqref{e:usmooth}, taking $L\in\{0.01, 0.02, \ldots, 10\}$ and $N \in\{2^2, 2^3, \ldots, 2^{13}\}$. The results show that, except in the case when $L$ is very small (where the results are completely erroneous), it is now possible to achieve an accuracy of the order of $\mathcal O(10^{-13})$ with just $N = 128$ points, and, in general, the highest accuracy is reached for a large range of values of $L$.
	
	\subsubsection{$u(x) = x(1+x^2)^{-1/2}$}
	In order to compute $(-\Delta)^{1/2}x(1 + x^2)^{-1/2}$, we make $k = 1$ in \eqref{e:thc} and take the real part of the resulting expression, getting
	\begin{equation}
		\label{e:fraclapxsqrt1x2}
		(-\Delta)^{1/2} \frac{x}{(1+x^2)^{1/2}} = \frac{2x(1+x^2)^{1/2} + 2\arg\sinh(x)}{\pi(1+x^2)^{3/2}}.
	\end{equation}
	In Figure \ref{f:fraclapxsqrt1x2}, we have considered an even extension at $s = \pi$, taking $L\in\{0.01, 0.02, \ldots, 10\}$ and $N \in\{2^2, 2^3, \ldots, 2^{13}\}$. As happened in the odd extension on the right-hand side of Figure \ref{f:fraclap1sqrt1x2}, when $L = 1$, an even extension at $s = \pi$ corresponds exactly to an elementary trigonometric function, in this case $\cos(s) = (e^{is} + e^{-is}) / 2$, so the numerical approximation is exact for all $N$, up to infinitesimally small rounding errors. However, even when $L \not=1$, the best results can be indeed achieved for a large range of values of $L$. Indeed, in this case, $U(s) = L\cot(s)(1+L^2\cot^2(s))^{-1/2}$, so a an even extension on $s = \pi$ gives us a periodic function of period $2\pi$ that is also $\mathcal C^\infty(\mathbb R)$.
	
	\begin{figure}[!htbp]
		\centering
		\includegraphics[width=0.5\textwidth, clip=true]{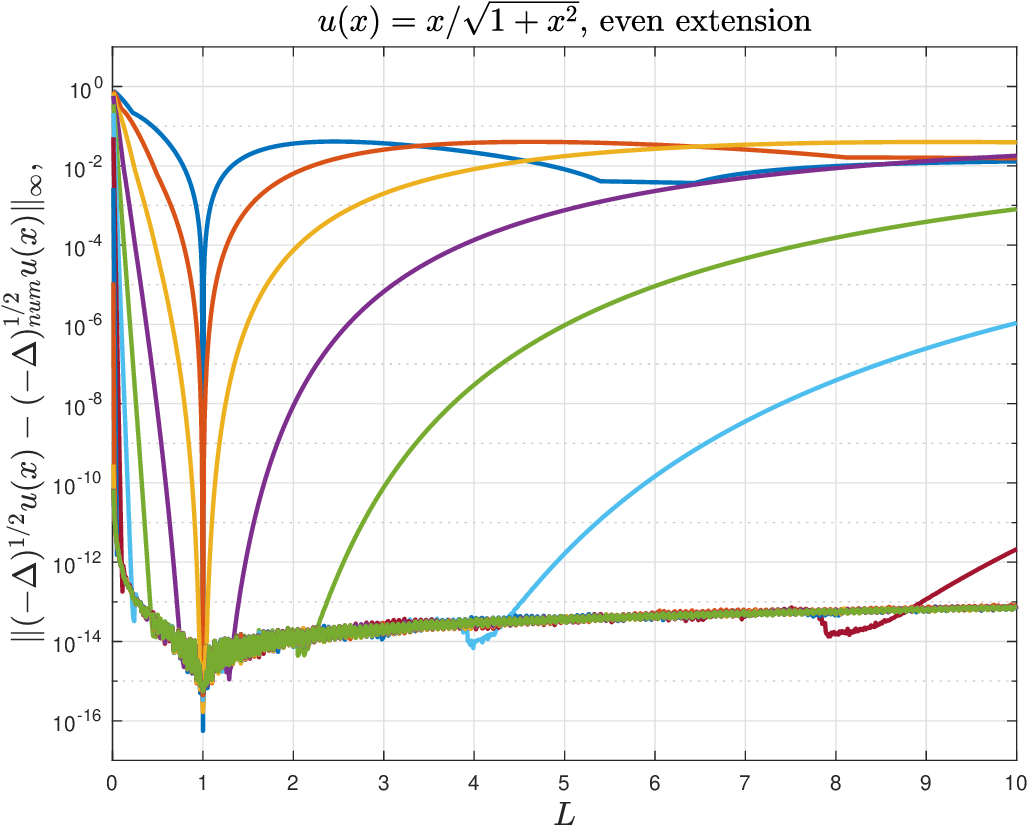}
		\includegraphics[width=\textwidth, clip=true]{legendnumberofpointsN.eps}
		\caption{Errors in the numerical approximation of $(-\Delta)^{1/2}u(x)$, for $u(x) = x(1+x^2)^{-1/2}$, taking $L\in\{0.01, 0.02, \ldots, 10\}$ and $N \in\{2^2, 2^3, \ldots, 2^{13}\}$. We have considered an even extension of $U(s)$ at $s = \pi$.}
		\label{f:fraclapxsqrt1x2}
	\end{figure}
	
	\subsubsection{$u(x) = \erf(x)$}We know (see, e.g., \cite{weideman1995}) that 
	$$
	\mathcal H\left(e^{-x^2}\right) = \frac{2}{\sqrt\pi}e^{-x^2}\int_{0}^{x}e^{t^2}dt,
	$$
	and the $\erf$ function, which can be evaluated numerically by means of, e.g., the Matlab command \verb"erf", is defined as
	$$
	\erf(x) = \frac{2}{\sqrt\pi}\int_0^xe^{-t^2}dt;
	$$
	hence, it follows that
	\begin{align}
		\label{e:delta12erf}
		(-\Delta)^{1/2}\erf(x) = \frac{4}{\pi}e^{-x^2}\int_{0}^{x}e^{t^2}dt = \frac{4}{\pi}D(x),
	\end{align}
	where $D(x)$ is Dawson's integral:
	$$
	D(x) = e^{-x^2}\int_{0}^{x}e^{t^2}dt,
	$$
	which can be evaluated numerically, e.g., as explained in \cite{weideman1995} or \cite{Nijimbere2019}, although, in our case, we have used the Matlab command \verb"dawson". In fact, this example is interesting, because Matlab's implementation of $D(x)$ is remarkably slower than the method proposed in this paper. For instance, when $N = 8192$ and $L = 10$, our code takes approximately $0.01$ seconds to execute, whereas computing the half Laplacian of the $\erf$ function by invoking Matlab's \verb"dawson" function takes around $6.2$ seconds. For the sake of comparison, we have also tried the corresponding Mathematica command \verb"DawsonF". Then, the elapsed time to compute $(4/\pi)D(x)$ for the values of $x$ we are interested in, which is given by the following piece of code, is of approximately $1.72$ seconds, i.e., almost a fourth of Matlab's time.
	\begin{verbatim}
		AbsoluteTiming[L = 10.; n = 8192.; y = (4./Pi) DawsonF[
		L Cot[Pi Table[(2.*k + 1.)/(2. n), {k, 0, n - 1}]]];]
	\end{verbatim}
	
	In order to get Figure \ref{f:fraclap1sqrt1x2}, we have considered an even extension of $U(s) = \erf(L\cot(s))$ at $s = \pi$, taking $L\in\{0.01, 0.02, \ldots, 10\}$ and $N \in\{2^2, 2^3, \ldots, 2^{13}\}$.  Note that, in this case, $U(s)\in\mathcal C^\infty(\mathbb R)$. In the numerical experiments, we remark that $N = 64$ is enough to attain an accuracy of the order of $\mathcal O(10^{-15})$, and, in general, the best results can be achieved for a large range of values of $L$.
	\begin{figure}[!htbp]
		\centering
		\includegraphics[width=0.5\textwidth, clip=true]{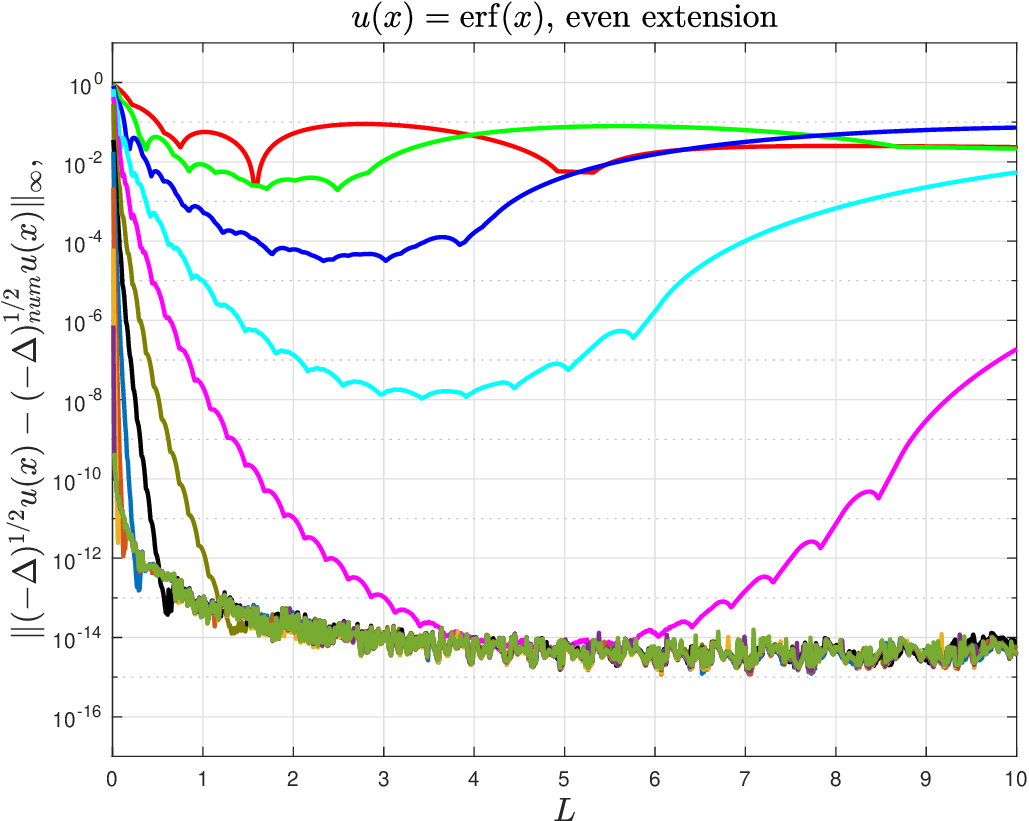}
		\includegraphics[width=\textwidth, clip=true]{legendnumberofpointsN.eps}
		\caption{Errors in the numerical approximation of $(-\Delta)^{1/2}u(x)$, for $u(x) = \erf(x)$, taking $L\in\{0.01, 0.02, \ldots, 10\}$ and $N \in\{2^2, 2^3, \ldots, 2^{13}\}$. We have considered an even extension of $U(s)$ at $s = \pi$.}
		\label{f:fraclaperf}
	\end{figure}
	
	\subsection{Reducing the nonperiodic case to the periodic one} \label{s:nonperiodictoperiodic}
	
	Given a function $u(x)$ such that its corresponding $U(s)$ is not periodic on $s\in[0,\pi]$, a natural question that arises is whether it might be advantageous to find a function $v(x)$, such that $U(s) - V(s)$, with $V(s) \equiv v(x(s))$, can be regarded as period on $s\in[0,\pi]$ and the half Laplacian of $v(x)$ is explicitly known. Then, we would have immediately
	$$
	(-\Delta)_s^{1/2}U(s) = (-\Delta)_s^{1/2}[U(s) - V(s)] + (-\Delta)_s^{1/2}V(s),
	$$
	and the computation of $(-\Delta)_s^{1/2}[U(s) - V(s)]$ would be done as explained in Section \ref{s:periodic}, which is less involved than extending the period from $s\in[0,\pi]$ to $s\in[0,2\pi]$. In order to assess the adequacy of this approach, we will  consider a few examples in the following pages.
	
	\subsubsection{$u(x) = \arctan(x)$} In Section \ref{s:atan}, we have computed numerically the half Laplacian of $(-\Delta)^{1/2}\arctan(x)$, whose exact expression is given by \eqref{e:fraclapatan}, by means of an even extension and a smooth extension. Therefore, an obvious choice of $V(s)$ is taking the same $2\pi$-periodic trigonometric function used in the smooth extension in \eqref{e:usmooth}, but defined now on $s\in[0,\pi]$ instead of $s\in[\pi,2\pi]$, because, from \eqref{e:triginterp}, it interpolates $U(s) = \arctan(L\cot(s))$ at $s = 0,$ and $s = \pi$:
	\begin{align}
		\label{e:Vsatan}
		V(s) & = \frac{75\pi}{128}\cos(s) + \left(-\frac{3}{4L} + \frac{1}{12L^3}\right)\sin(2s)
		\cr 
		& \qquad - \frac{25\pi}{256}\cos(3s)  + \left(\frac{1}{8L} - \frac{1}{24L^3}\right)\sin(4s) + \frac{3\pi}{256}\cos(5s), \quad s\in[0,\pi].
	\end{align}
	Then, defining $W(s) = U(s) - V(s)$, $W(s)$ satisfies immediately $W(0) = W'(0) = W''(0) = W'''(0) = W^{(4)}(0) = 0$ and $W(\pi) = W'(\pi) = W''(\pi) = W'''(\pi) = W^{(4)}(\pi) = 0$. Moreover, $W^{(5)}(0) = W^{(5)}(\pi) = -120/L + 80/L^3 - 24/L^5$, but $W^{(6)}(0) = 225\pi/2$ and $W^{(6)}(\pi) = -225\pi/2$, so $W(s)\in\mathcal C^5(\mathbb R)$, and we can approximate numerically $(-\Delta)^{1/2}_sW(s)$ as explained in Section \ref{s:periodic}.
	
	\begin{figure}[!htbp]
		\centering
		\includegraphics[width=0.5\textwidth, clip=true]{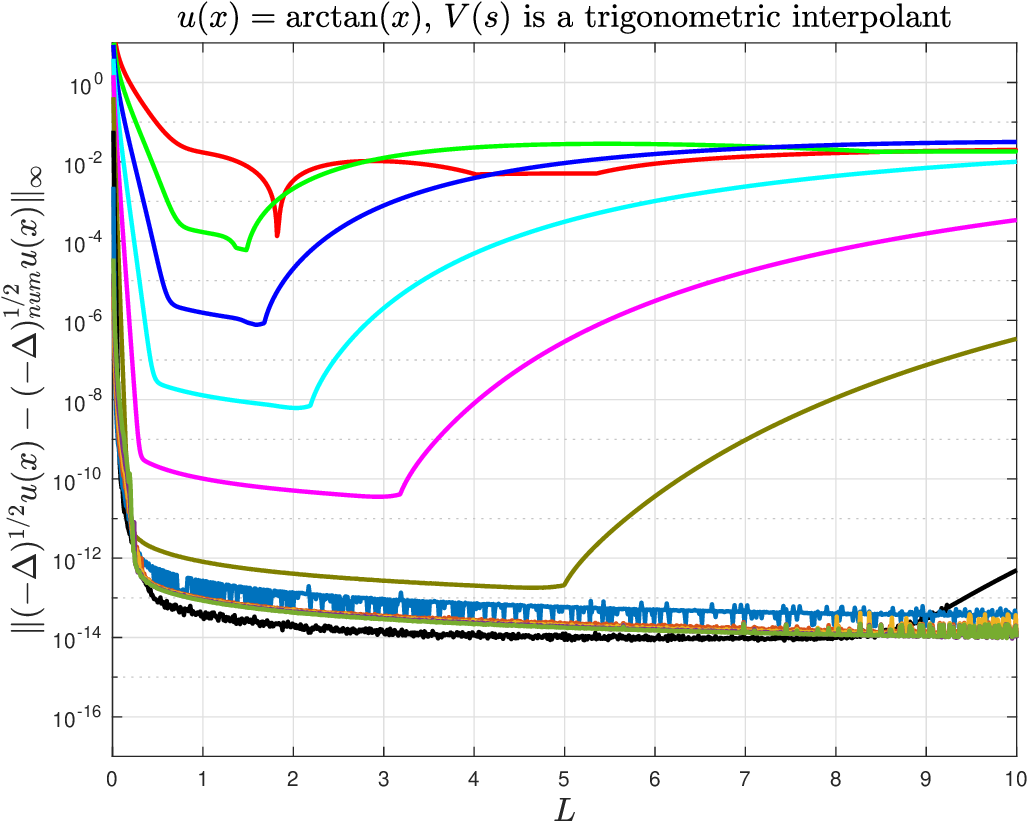}\includegraphics[width=0.5\textwidth, clip=true]{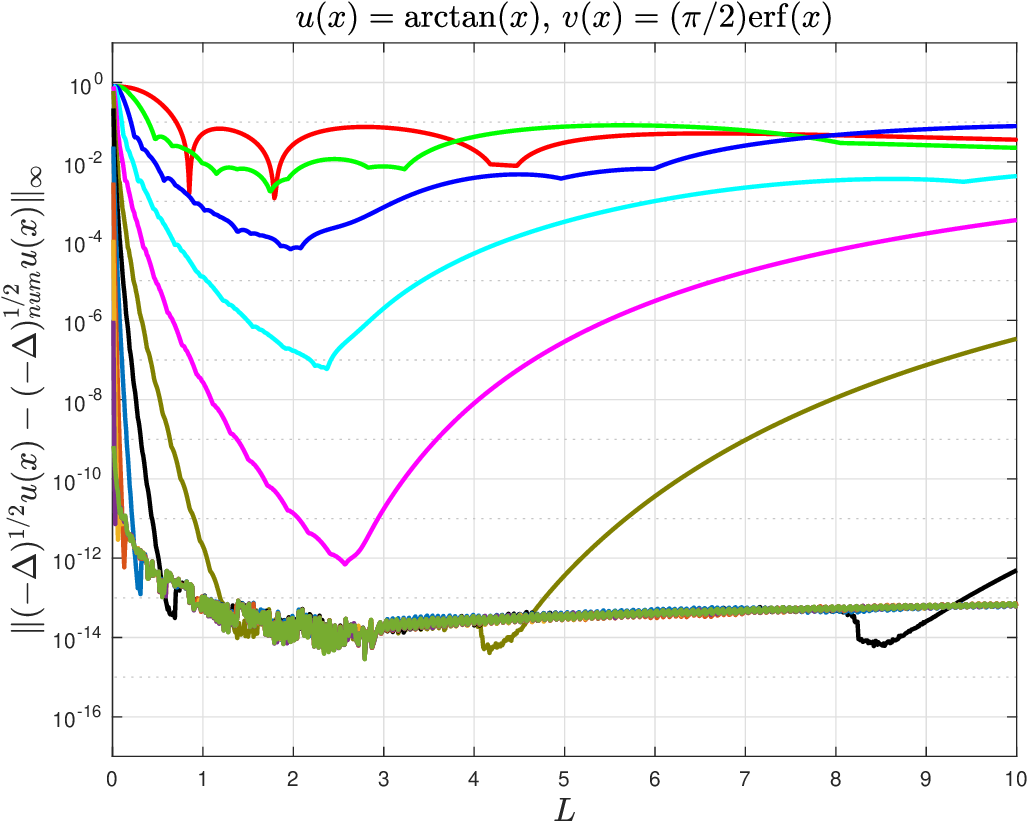}
		\includegraphics[width=\textwidth, clip=true]{legendnumberofpointsN.eps}
		\caption{Errors in the numerical approximation of $(-\Delta)^{1/2}u(x)$, for $u(x) = \arctan(x)$, taking $L\in\{0.01, 0.02, \ldots, 10\}$ and $N \in\{2^2, 2^3, \ldots, 2^{13}\}$. Left: we have considered a trigonometric interpolant as $V(s)$. Right: we have taken $v(x) = \erf(x)$.}
		\label{f:fraclapatanVerf}
	\end{figure}
	
	On the other hand, $(-\Delta)_s^{1/2}U(s) = (-\Delta)_s^{1/2}W(s) + (-\Delta)_s^{1/2}V(s)$, and in our case, the analytic expression of $(-\Delta)_s^{1/2}V(s)$ is known explicitly. Indeed, when $k$ is even, from \eqref{e:tha1},
	$$
	(-\Delta)_s^{1/2}\cos(ks) = \frac{|k|\sin^2(s)}{L}\cos(ks), \qquad (-\Delta)_s^{1/2}\sin(ks) = \frac{|k|\sin^2(s)}{L}\sin(ks),
	$$
	and when $k$ is odd, $(-\Delta)_s^{1/2}\cos(ks)$ and $(-\Delta)_s^{1/2}\sin(ks)$ are given respectively by \eqref{e:deltas12cosks} and \eqref{e:deltas12sinks}. Therefore, introducing the expressions for $\cos(s)$, $\sin(2s)$, $\cos(3s)$, $\sin(4s)$ and $\cos(5s)$ into \eqref{e:Vsatan}, and applying elementary trigonometric identities, we get:
	\begin{align*}
		(-\Delta)_s^{1/2}V(s) & = \frac{15}{512L}(35\sin(s) - 21\sin(3s) + 7\sin(5s) - \sin(7s))\ln\left(\cot\left(\frac s2\right)\right) 
		\cr
		& \qquad + \left(\frac{283}{256L} - \frac{7}{8L^2} + \frac{1}{8L^4}\right)\sin(2s) + \left(-\frac{25}{64L} + \frac{5}{8L^2} - \frac{1}{8L^4}\right)\sin(4s)
		\cr
		& \qquad + \left(\frac{15}{256L} - \frac{1}{8L^2} + \frac{1}{24L^4}\right)\sin(6s).
	\end{align*}
	On the left-hand side of Figure \ref{f:fraclapatanVerf}, we have approximated numerically the half Laplacian of $\arctan(x)$ by adding and subtracting \eqref{e:Vsatan}, taking $L\in\{0.01, 0.02, \ldots, 10\}$ and $N \in\{2^2, 2^3, \ldots, 2^{13}\}$. Although $W(s)$ is globally more regular than  the $U(s)$ constructed in \eqref{e:usmooth} (they are of class $\mathcal C^5(\mathbb R)$ and $\mathcal C^4(\mathbb R)$, respectively), the results are very similar to those on the right-hand side of Figure \ref{f:fraclapatan}, and they improve those on the left-hand side of Figure  \ref{f:fraclapatan} corresponding to an even extension of $U(s)$ at $s = \pi$.
	
	We remark that choosing a trigonometric interpolant as $V(s)$ is not the only option. Indeed, any other function $v(x)$ whose half Laplacian is known and such that $v(\infty) = u(\infty)$ and $v(-\infty) = u(-\infty)$ can be used in principle. In this regard, we have considered $v(x) = (\pi/2)\erf(x)$, whose half Laplacian is given by \eqref{e:delta12erf} times $\pi/2$, where the constant $\pi/2$ is necessary in order to make $v(x)$ match $u(x)$ as $x\to\pm\infty$. In that case, $V(s) = (\pi/2)\erf(L\cot(s))$ satisfies $V^{(m)}(0) = V^{(m)}(\pi) = 0$, for all $m\in\mathbb N$, so $W(s) = U(s) - V(s)\in\mathbb C^{\infty}(\mathbb R)$.
	
	On the right-hand side of Figure \ref{f:fraclapatanVerf}, we show the corresponding errors, taking again\break$L\in\{0.01, 0.02, \ldots, 10\}$ and $N \in\{2^2, 2^3, \ldots, 2^{13}\}$. The results improve those on the left-hand side of Figure  \ref{f:fraclapatan} corresponding to an even extension of $U(s)$, although they are in general slightly worse than those on the left-hand side of Figure \ref{f:fraclapatanVerf}, except when $N = 64$ and $N = 128$. 
	
	In the results in Figure \ref{f:fraclapatanVerf}, we have chosen $V(s)$ in such a way that $W(s)$ has a high global regularity. Reciprocally, we might expect that examples where $W(s)$ is less regular give us worse results. To illustrate this, we have taken $u(x) = x(1 + x^2)^{-1/2}$, so $U(s) = L\cot(s)(1+L^2\cot^2(s))^{-1/2}$, and $(-\Delta)^{1/2}u(x)$ is given by \eqref{e:fraclapxsqrt1x2}. On the left-hand side of Figure \ref{f:fraclapxsqrt1x2V}, we plot the errors corresponding to $v(x) = (2/\pi)\arctan(x)$ (where the constant $2/\pi$ is necessary in order to make $v(x)$ match $u(x)$ as $x\to\pm\infty$), and, on the right-hand side of Figure \ref{f:fraclapxsqrt1x2V}, those corresponding to $v(x) = \erf(x)$. In both cases, we have taken $L\in\{0.01, 0.02, \ldots, 100\}$ and $N \in\{2^2, 2^3, \ldots, 2^{13}\}$, and the results of both graphics are very similar, and much worse than those of Figure \ref{f:fraclapxsqrt1x2} corresponding to the even extension of $U(s)$ at $s = \pi$. This is due to the fact that, when $V(s) = (2/\pi)\arctan(L\cot(s))$ or $V(s) = \erf(L\cot(s))$, then $W(s) = U(s) - V(s)\in\mathcal C^0(\mathbb R)$.
	
	\begin{figure}[!htbp]
		\centering
		\includegraphics[width=0.5\textwidth, clip=true]{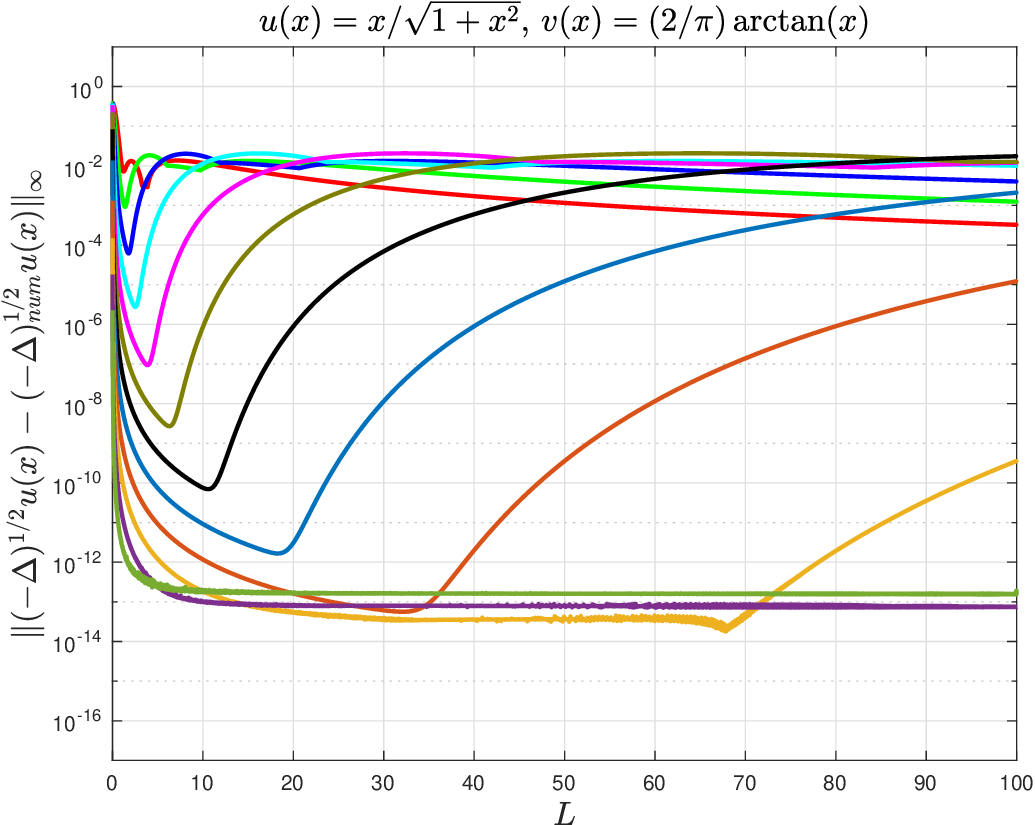}\includegraphics[width=0.5\textwidth, clip=true]{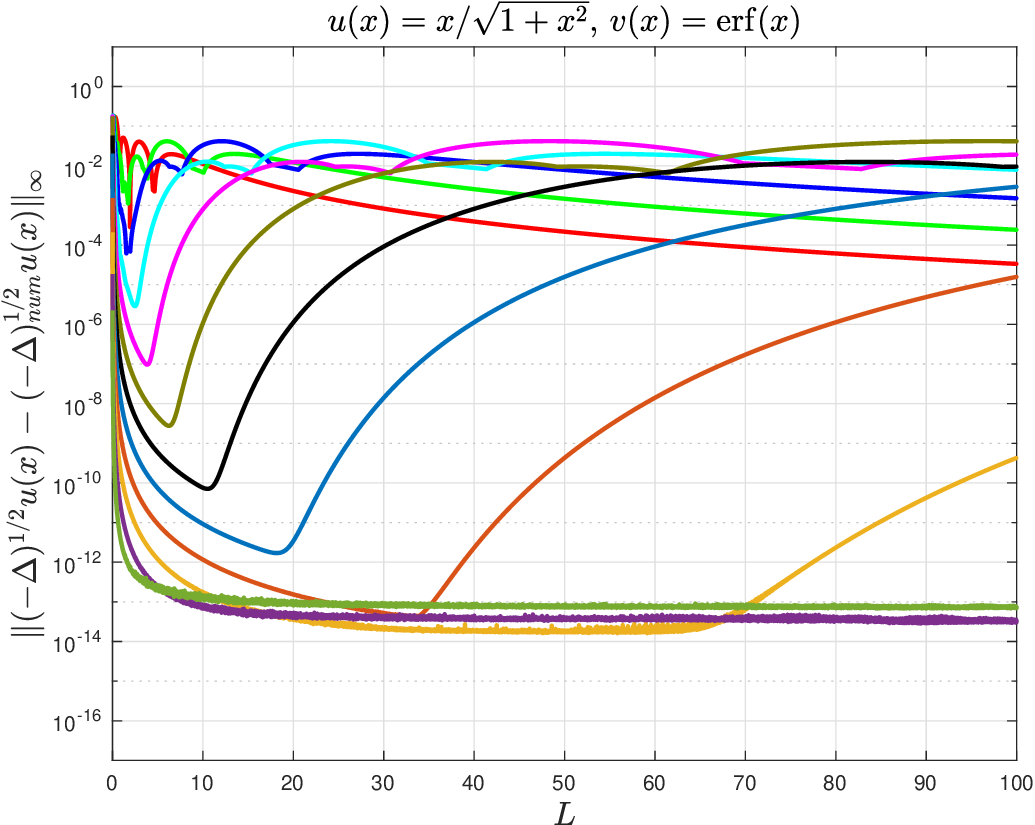}
		\includegraphics[width=\textwidth, clip=true]{legendnumberofpointsN.eps}
		\caption{Errors in the numerical approximation of $(-\Delta)^{1/2}u(x)$, for $u(x) = \arctan(x)$, taking $L\in\{0.01, 0.02, \ldots, 100\}$ and $N \in\{2^2, 2^3, \ldots, 2^{13}\}$. Left: we have taken $v(x) = (2/\pi)\arctan(x)$. Right: we have taken $v(x) = \erf(x)$.}
		\label{f:fraclapxsqrt1x2V}
	\end{figure}
	
	\subsection{Improving the periodic case}
	
	The procedure explained in Section \ref{s:nonperiodictoperiodic} can be applied not only when $U(s)$ is not periodic on $s\in[0,\pi]$, but also when it is, by choosing a periodic function $V(s)$ on $s\in[0,\pi]$ whose fractional Laplacian is explicitly known, and such that $W(s) = U(s) - V(s)$ is globally more regular on $s\in\mathbb R$. In order to illustrate this, we have considered again the example $u(x) = (1+x^2)^{-1/2}$ in Section \ref{s:numericalperiodic}. In that case, $U(s) = (1 + L^2\cot^2(s))^{-1/2}$, and we need to find $V(s)$ such that
	\begin{equation}
		\label{e:triginterp2}
		\begin{aligned}
			V(0) = U(0) & = 0, & V(\pi) & \equiv U(\pi) = 0,
			\cr
			V'(0) = U'(0) & = \frac{1}{L}, & V'(\pi) & \equiv U'(\pi) = -\frac{1}{L},
			\cr
			V'''(0)= U''(0) & = 0, & V''(\pi) & \equiv U''(\pi) = 0,
			\cr
			V'''(0) = U'''(0) & = \frac2L - \frac3{L^3}, & V'''(\pi) & \equiv U'''(\pi) = -\frac2L + \frac3{L^3},
			\cr
			V^{(4)}(0) = U^{(4)}(0) & = 0, & V^{(4)}(\pi) & \equiv U^{(4)}(\pi) = 0.
		\end{aligned}
	\end{equation}
	After proposing the ansatz
	$$
	V(s) = \sum_{k = 1}^{5}[\alpha_k\cos(ks) + \beta_k\sin(ks)],
	$$
	it is not difficult to conclude that the trigonometric interpolant that satisfies \eqref{e:triginterp2} is
	$$
	V(s) = \left(\frac{11}{8L} - \frac{3}{8L^3}\right)\sin(s) + \left(-\frac{1}{8L} + \frac{1}{8L^3}\right)\sin(3s),
	$$
	whose half Laplacian is explicitly known. Indeed, bearing in mind \eqref{e:deltas12sinks} and applying elementary trigonometric identities, we conclude that
	\begin{align*}
		(-\Delta)_s^{1/2}V(s) & = \frac1{16\pi}\bigg[\left(-\frac{14}{L^2} + \frac{6}{L^4}\right)\cos(s)  + \left(\frac{17}{L^2} - \frac{9}{L^4}\right)\cos(3s)
		\cr
		& \qquad \qquad + \left(-\frac{3}{L^2} + \frac{3}{L^4}\right)\cos(5s)\bigg]\ln\left(\cot\left(\frac s2\right)\right) + \frac1\pi\left(\frac{13}{8L^2} - \frac{5}{8L^4}\right) 
		\cr
		& \qquad + \frac1\pi\left(-\frac{2}{L^2} + \frac{1}{L^4}\right)\cos(2s) + \frac1\pi\left(\frac{3}{8L^2} - \frac{3}{8L^4}\right)\cos(4s).
	\end{align*}
	Then, it is straightforward to check that $W(s) = U(s) - V(s)\in\mathcal C^4(\mathbb R)$.
	
	\begin{figure}[!htbp]
		\centering
		\includegraphics[width=0.5\textwidth, clip=true]{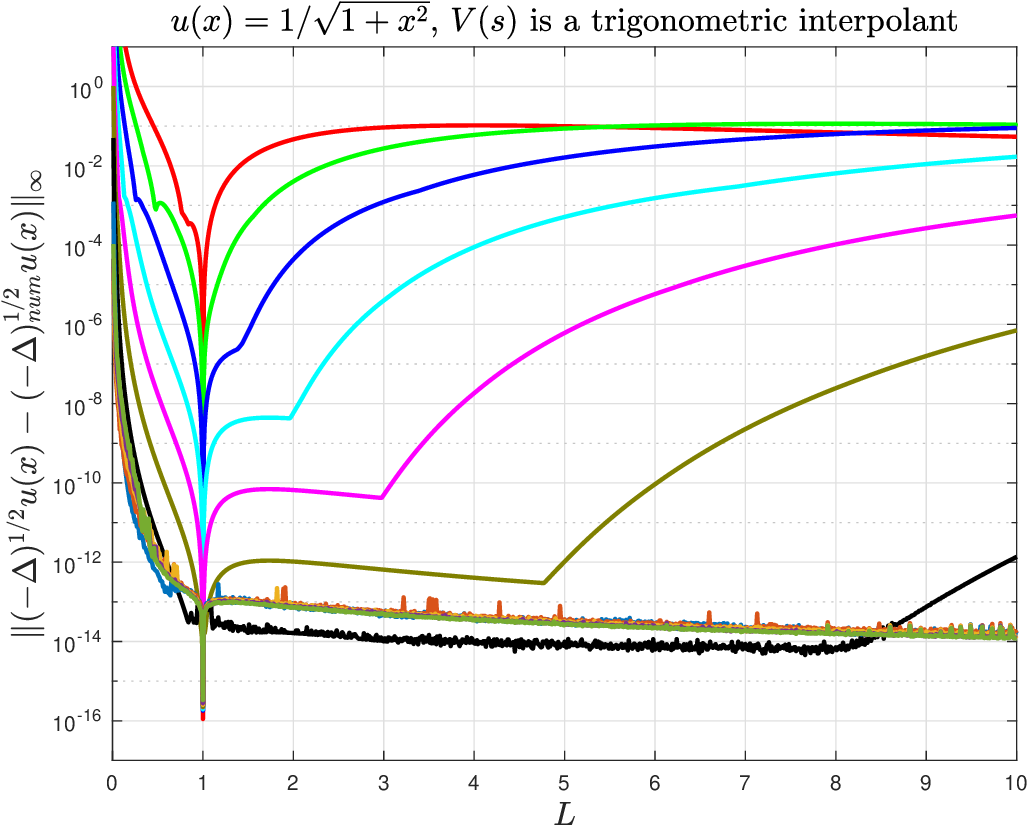}
		\includegraphics[width=\textwidth, clip=true]{legendnumberofpointsN.eps}
		\caption{Errors in the numerical approximation of $(-\Delta)^{1/2}u(x)$, for $u(x) = \arctan(x)$, taking $L\in\{0.01, 0.02, \ldots, 10\}$ and $N \in\{2^2, 2^3, \ldots, 2^{13}\}$. We have considered an even extension of $U(s)$ at $s = \pi$.}
		\label{f:fraclap1sqrt1x2V}
	\end{figure}
	
	In Figure \ref{f:fraclap1sqrt1x2V}, we have plotted the numerical errors for $L\in\{0.01, 0.02, \ldots, 10\}$ and $N \in\{2^2, 2^3, \ldots, 2^{13}\}$. When $L = 1$, $U(s) \equiv V(s)$, so there are only infinitesimally small rounding errors, and, in general, when $L \not = 1$, a high accuracy can be achieved from $N = 128$ on. In comparison with Figure \ref{f:fraclap1sqrt1x2}, the results are much better than those corresponding to the even extension, but not so good as those corresponding to the odd extension, which is still superior.
	
	\subsection{Discussion on the numerical experiments}
	
	When $U(s) \equiv u(x(s))$ is periodic of period $\pi$, the accuracy of the method depends on how well the truncated Fourier series \eqref{e:U(s)} approximates $U(s)$, and this is related to the asymptotic behavior of $u(x)$ as $x\to\pm\infty$, which determines the decay of the Fourier coefficients of $U(s)$. For instance, if $u(x)$ tends to a constant with a correction of $\mathcal O(1 / x)$ as $x\to\pm\infty$, then, $U(s)\in\mathcal C^0(\mathbb R)$ globally, and the Fourier coefficients of $U(s)$ decay quadratically, so the errors decay quadratically too, and larger values of $N$ and $L$ are in general required; however, a well-suited extension to $s\in[0,2\pi]$ can improve largely the results (see, e.g., Figure \ref{f:fraclap1sqrt1x2}, where an odd extension improves the results). On the other hand, if $u(x)$ has a faster decay, it is safe to work with $s\in[0,\pi]$ (see, e.g., Figure \ref{f:fraclap11x4}), and the implementation is simpler, as there is no need to extend the function to $[0,2\pi]$ to get good results.
	
	When $U(s)$ is not periodic of period $\pi$, i.e., when $\lim_{x\to-\infty}u(x)\not=\lim_{x\to\infty}u(x)$, then, in general, an even extension at $s = \pi$ can always be used (see, e.g., Figures \ref{f:fraclapxsqrt1x2} and \ref{f:fraclaperf}). However, when the decay rate of the correction as $x\to\pm\infty$ is of  $\mathcal O(1 / x)$, then larger values of $N$ and $L$ will be required, and it might be convenient to consider other smoother extensions (see, e.g., Figure \ref{f:fraclapatan}, where we have considered a trigonometric interpolant that gives $U(s)\in\mathcal C^4(\mathbb R)$).
	
	On the other hand, another option is to find a function $v(x)$ whose half Laplacian is explicitly known, and such that $w(x) = u(x) - v(x)$ satisfies $\lim_{x\to-\infty}w(x) = \lim_{x\to\infty}w(x)$, and $W(s)\equiv w(x(s))$ is periodic of period $\pi$ and globally more regular than $u(x)$. A possible option is to take $V(s) \equiv v(x(s))$ to be a trigonometric interpolant of $U(s)$; then, when $k$ is odd, $\cos(ks)$ and $\sin(ks)$ can be computed respectively by means of \eqref{e:deltas12cosks} and \eqref{e:deltas12sinks}, and the numerical computation of the half Laplacian is reduced to the case of period $\pi$.
	
	Summarizing, if the analytical expression of $u(x)$ is explicitly known, an even extension of $U(s)$ at $s = \pi$ is always possible, and particularly advisable if $u(x)$ decays fast at $x\to\infty$, although there are other possibilities that can be preferable if the global regularity of $U(s)$ is not too high, and this is specially true if an even extension of $U(s)$ at $s = \pi$ yields only a continuous function $U(s)\in\mathcal C^0(\mathbb R)$, because $u(x)$ tends to two respective constants, as $x\to\infty$, with a correction of $\mathcal O(1/x)$. On the other hand, in our opinion, the use of an even extension is not only completely justified, but also advisable if $u(x)$ is not explicitly known, and only approximations of $u(x)$ at the nodes $x = x_j$ are available, because we might not have enough accurate information to construct an interpolant $V(s)$ or a smooth extension at $s\in[\pi,2\pi]$. This is well illustrated in Section \ref{s:fisher} in the simulation of evolution equations like \eqref{e:fisher}, which have, additionally, a very different behavior when $x\to-\infty$ and when $x\to\infty$.
	
	\section{Simulation of a fractional Fisher's equation}
	
	\label{s:fisher}
	
	In order to illustrate the adequacy of the method in this paper, we have simulated the following initial-value problems:
	\begin{equation}
		\label{e:fisher1}
		\left\{
		\begin{aligned}
			& u_t = -(-\Delta)^{1/2}u + f(u), \quad x\in\mathbb R,\ t \ge 0,
			\cr
			& u(x, 0) = u_0(x) = \frac{1}{2} - \frac{x}{2\sqrt{1 + x^2}},
		\end{aligned}
		\right.
	\end{equation}
	and
	\begin{equation}
		\label{e:fisher2}
		\left\{
		\begin{aligned}
			& u_t = -(-\Delta)^{1/2}u + f(u), \quad x\in\mathbb R,\ t \ge 0,
			\cr
			& u(x, 0) = u_0(x) = 10^{-4}\left(\frac{1}{2} - \frac{x}{2\sqrt{1 + x^2}}\right),
		\end{aligned}
		\right.
	\end{equation}
	for $f(u) = u(1-u)$, which correspond to the fractional Fisher's equation \eqref{e:fisher} taking as initial data \eqref{e:u0} and \eqref{e:u0b}, respectively.
	
	We have used a classical fourth-order Runge-Kutta method to advance in time, taking $N = 1048576$, $L = 500000$, $\Delta t  = 0.1$. Note that, in evolution problems on an infinite domain like this one, where there is no domain truncation, the boundary conditions need not be explicitly enforced (see, e.g., \cite{Boyd1987}); i.e., we do not need to introduce explicitly in the codes the behaviour of $u(x, t)$ and/or its derivatives, as $x\to\pm\infty$. In this regard, it is especially interesting the discussion in \cite{Boyd1987} on the differences between natural boundary conditions, which do not require modifying the expansion functions (as in our case), and essential boundary conditions, which must be enforced individually on each basis function; indeed, when a boundary condition is natural, the differential equation itself obliges the boundary condition to be satisfied by the sum of the basis functions.
	
	In order to implement it in Matlab, we transform Listing \ref{code:nonperiodic} into a function, namely\break\verb"function_half_laplacian", which can be done immediately, by adding at the beginning of the code the lines
	\begin{verbatim}
		function Fu_num=function_half_laplacian(u2,L)
		N=length(u2)/2; % number of nodes
	\end{verbatim}
	and removing the assignments of \verb"N", \verb"L", \verb"u" and \verb"u2". Then, a minimal implementation in Matlab of, e.g., \eqref{e:fisher1} takes just a few lines of code.
	
	\lstinputlisting[language=Matlab, basicstyle={\footnotesize\ttfamily}, caption = {Simulation of \eqref{e:fisher1}}]{fisherequationpaper.m}
	
	As can be seen, we have considered an even extension also in the intermediate steps of each iteration, which yields a very stable implementation, even for rather large value of $\Delta t$. On the other hand, the large values of $N$ and $L$ are necessary, in order to capture correctly the accelerating moving front solutions.
	
	\begin{figure}[!htbp]
		\centering
		\includegraphics[width=0.5\textwidth, clip=true]{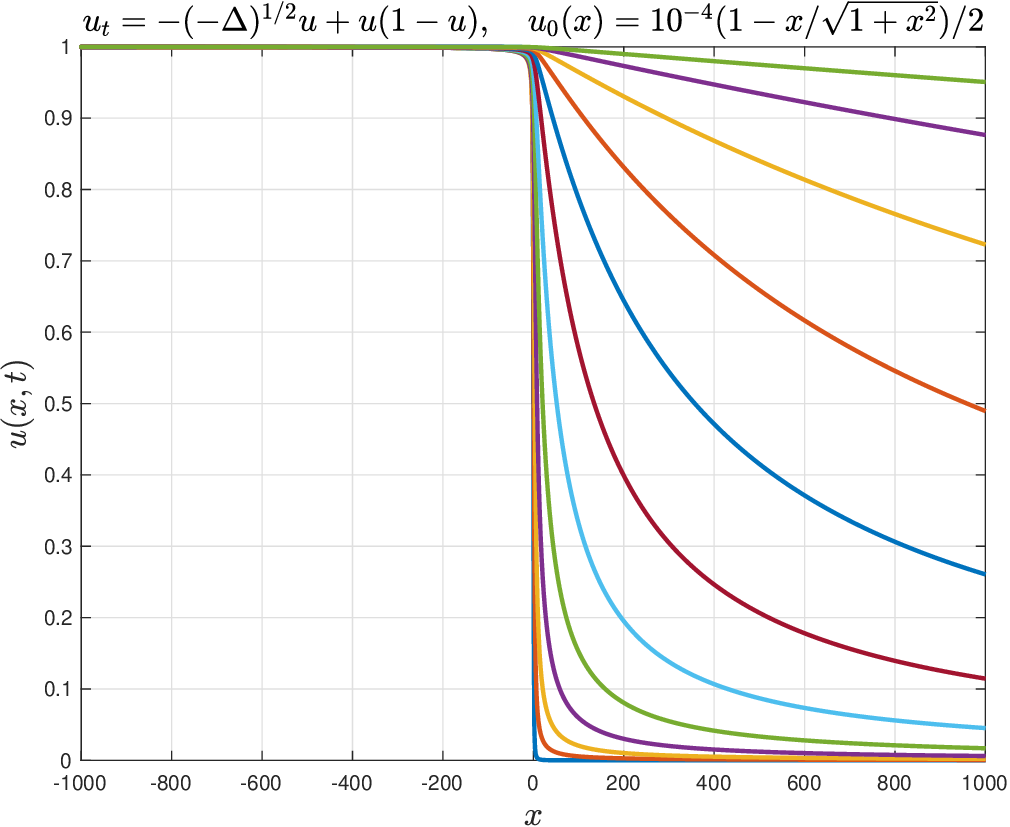}\includegraphics[width=0.5\textwidth, clip=true]{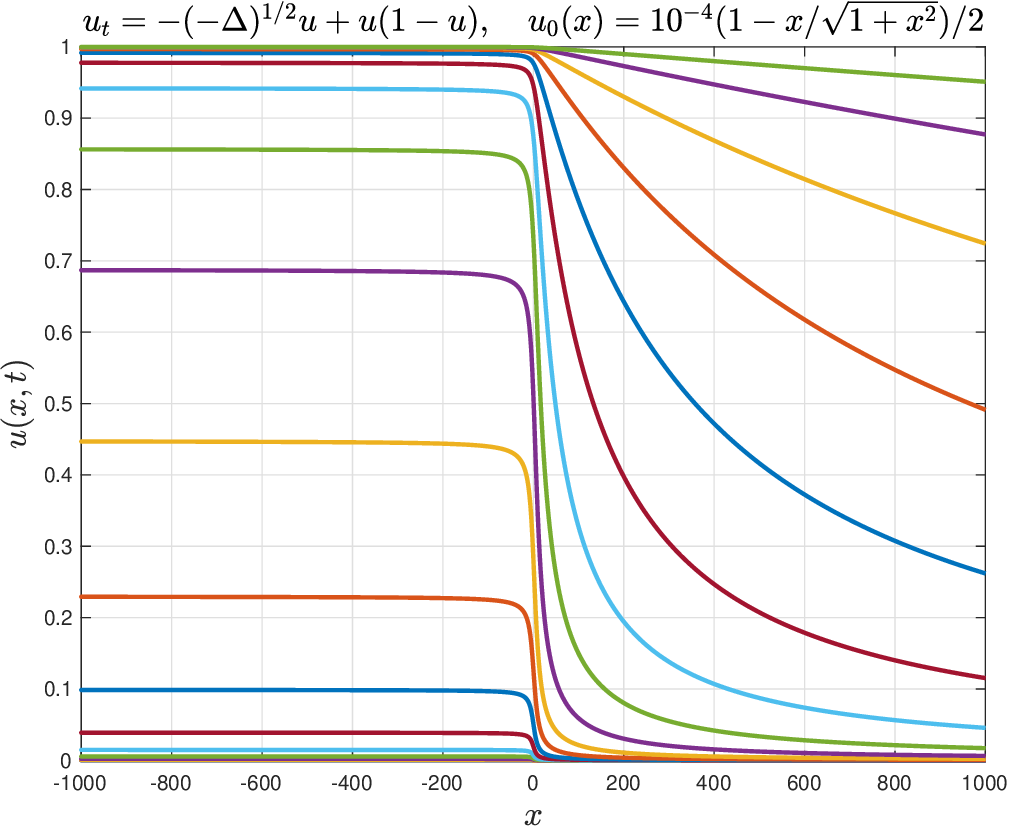}
		\caption{Left: plot of $u(x, t)$ corresponding to \eqref{e:fisher1}, at $t\in\{0, 1, \ldots, 11\}$. Right: plot of $u(x, t)$ corresponding to \eqref{e:fisher2}, at $t\in\{0, 1, \ldots, 18\}$.}
		\label{f:fisherinitial}
	\end{figure}
	
	\begin{figure}[!htbp]
		\centering
		\includegraphics[width=0.5\textwidth, clip=true]{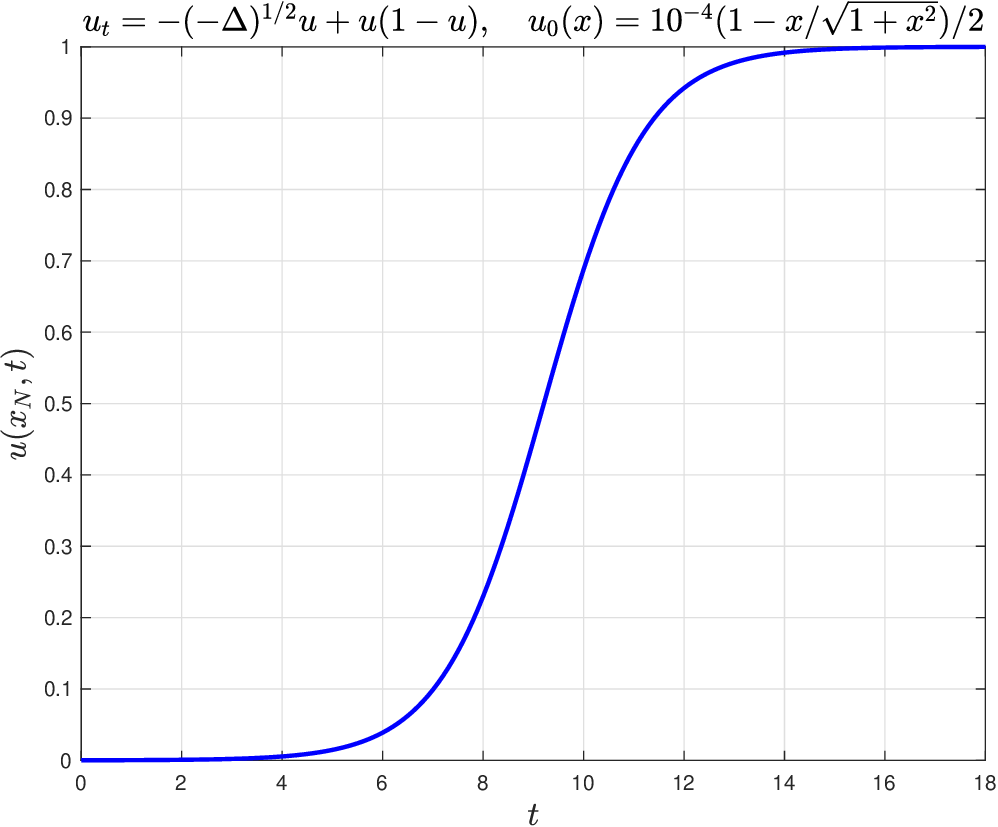}\includegraphics[width=0.5\textwidth, clip=true]{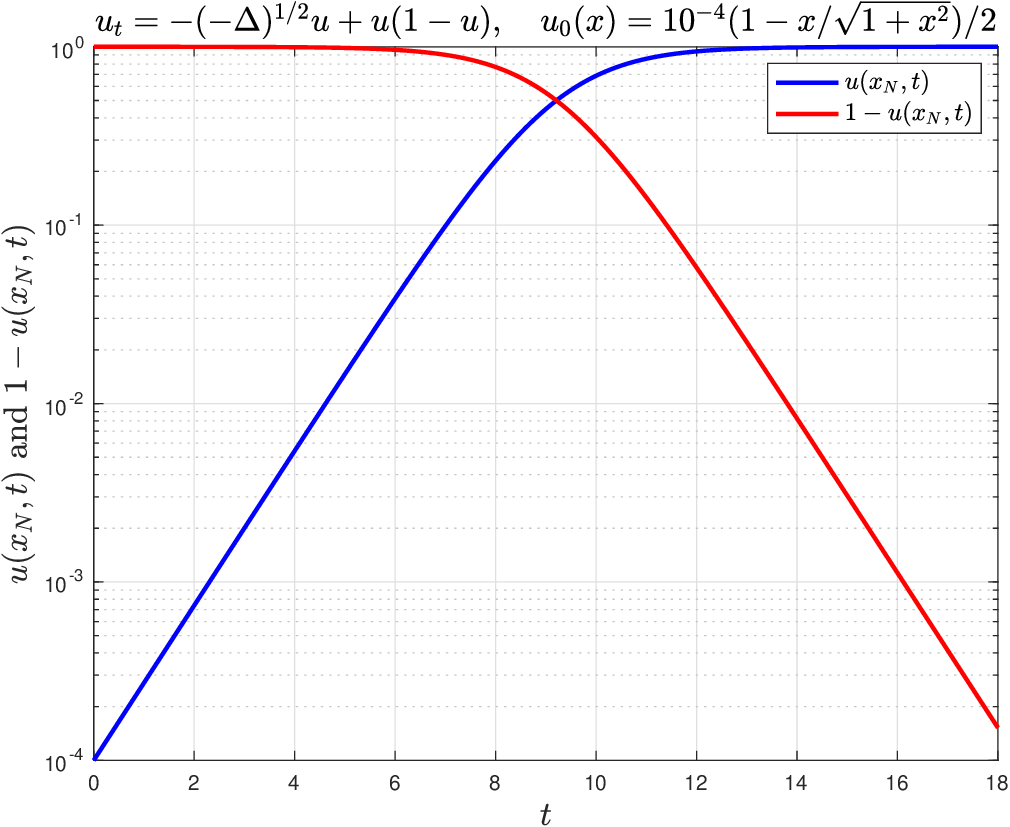}
		\caption{Left: plot of $u(x, t)$ corresponding to \eqref{e:fisher1}, at $t\in\{0, 1, \ldots, 18\}$. Right: plot of $u(x, t)$ corresponding to \eqref{e:fisher2}, at $t\in\{0, 1, \ldots, 18\}$.}
		\label{f:evolxminusinf}
	\end{figure}
	
	In Figure \ref{f:fisherinitial}, we have plotted, on the left-hand side, $u(x, t)$ corresponding to \eqref{e:fisher1} at $t\in\{0, 1, \ldots, 11\}$; and on the right-hand side, $u(x, t)$ corresponding to \eqref{e:fisher1} at $t\in\{0, 1, \ldots, 18\}$. Observe that in \eqref{e:fisher1}, $u_0(x)\to1$, as $x\to-\infty$, so $u(x, t)\to1$, as $x\to-\infty$, for all $t > 0$. However, in \eqref{e:fisher2}, $u_0(x)\to10^{-4}$, as $x\to-\infty$, so $u(x, t)$, for $x \ll -1$, tends to the stable state $u = 1$, as $t$ increases. Once that $u(x, t)$, for $x \ll -1$, is close enough to $u = 1$, the evolution of \eqref{e:fisher1} and that of \eqref{e:fisher2} are similar, but the latter has a delay of approximately $7$ time unities with respect to the former. In this regard, it is interesting to analyze how $u(x, t)$ evolves from $u = 10^{-4}$ to $u = 1$, as $x\to-\infty$. To clarify this, we have plotted, on the left-hand side of Figure \ref{f:evolxminusinf}, $u(x_{N-1}, t)$ at $t\in\{0, 0.1, \ldots, 18\}$, where $x_{N-1} = L\cot(s_{N-1}) = L \cot(\pi (2N - 1) / (2N)) = -3.3377\times10^{11}$ is the most negative spatial node. Moreover, on the right-hand side of Figure \ref{f:evolxminusinf}, we have plotted in semilogarithmic scale both $u(x_{N-1}, t)$ and $1 - u(x_{N-1}, t)$ at $t\in\{0, 0.1, \ldots, 18\}$, which suggests that near $u = 10^{-4}$ and near $u = 1$, respectively, the curves behave approximately as straight lines, which suggests at its turn an exponential behavior of $u(x, t)$ as $x\to-\infty$, when $u$ leaves $u = 10^{-4}$, and also when $u$ approaches $1$. A deeper study of this interesting fact lies beyond the scope of this paper, but serves very well to illustrate the adequacy of an even extension of $U(s, t)$ at $s = \pi$, whereas another kind of extension $U(s, t)$ at $s\in[\pi,2\pi]$, or finding some $V(s, t)$ such that $U(s, t) - V(s, t)$ is periodic of period $\pi$ and more regular, would greatly increase the complexity of the code, and it is not completely clear, how to do it in an ideal manner. Here, it must underlined that $u(x, t)$ tends to a constant exponentially, as $x\to-\infty$, but has slow, algebraic decay, as $x\to\infty$, and the even extension captures correctly this behavior.
	
	Besides being able to deal with situations where the behavior of $u(x, t)$ is a priori not known, as $x\to-\infty$ or as $x\to\infty$, we are also capable of capturing the accelerating fronts that ensue, even when they are found at very large values of $x$. On the left-hand side of Figure \ref{f:fisher}, we plot the numerical approximation of $u(x, t)$ corresponding to \eqref{e:fisher1} at $t \in\{4, 5, \ldots, 25\}$; and on the right-hand side of Figure \ref{f:fisher}, we plot the numerical approximation of $u(x, t)$ corresponding to \eqref{e:fisher2} at $t \in\{11, 12, \ldots, 32\}$; in both cases, the abscissa axis is in logarithmic scale.
	
	In our opinion, it is remarkable how the numerical method allows us to capture the fronts, even if it is located at huge values of $x$ (i.e., of the order of $\mathcal O(10^{10}))$. We remark that, as $t$ increases, the curves are approximately parallel and equidistant, suggesting that the speed grows exponentially fast in time. Furthermore, as noted in Figure \ref{f:fisherinitial}, the evolution of \eqref{e:fisher1} and that of \eqref{e:fisher2} are similar, but the latter has a delay of approximately $7$ time unities
	\begin{figure}[!htbp]
		\centering
		\includegraphics[width=0.5\textwidth, clip=true]{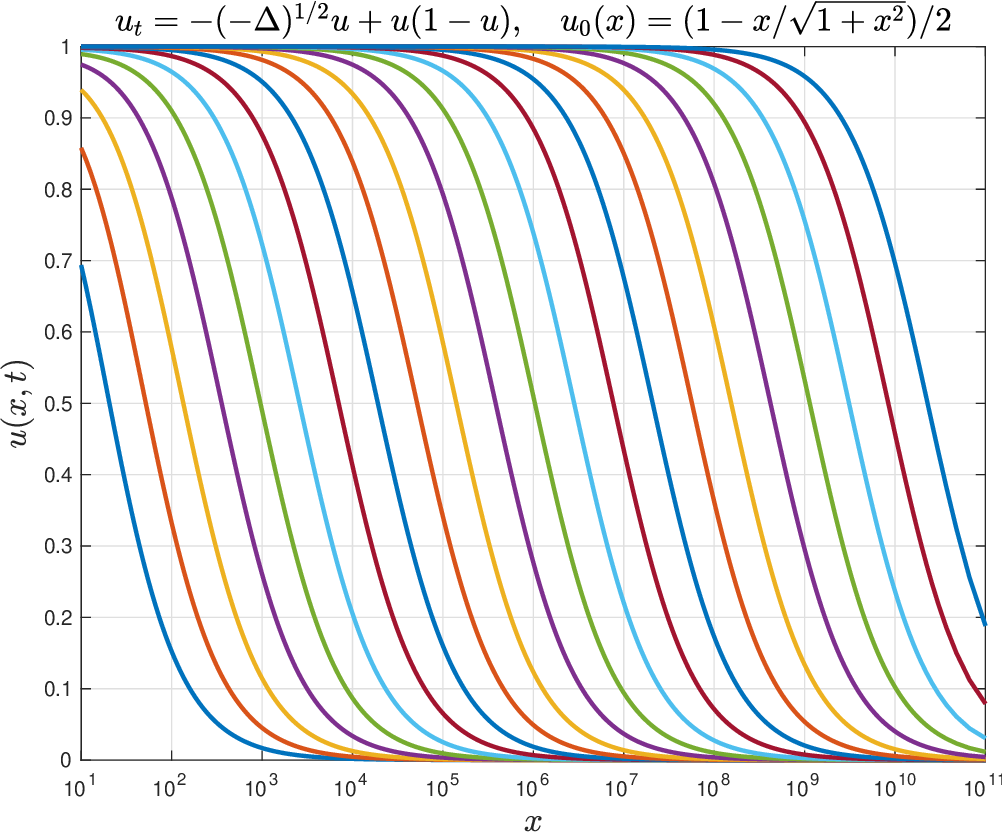}\includegraphics[width=0.5\textwidth, clip=true]{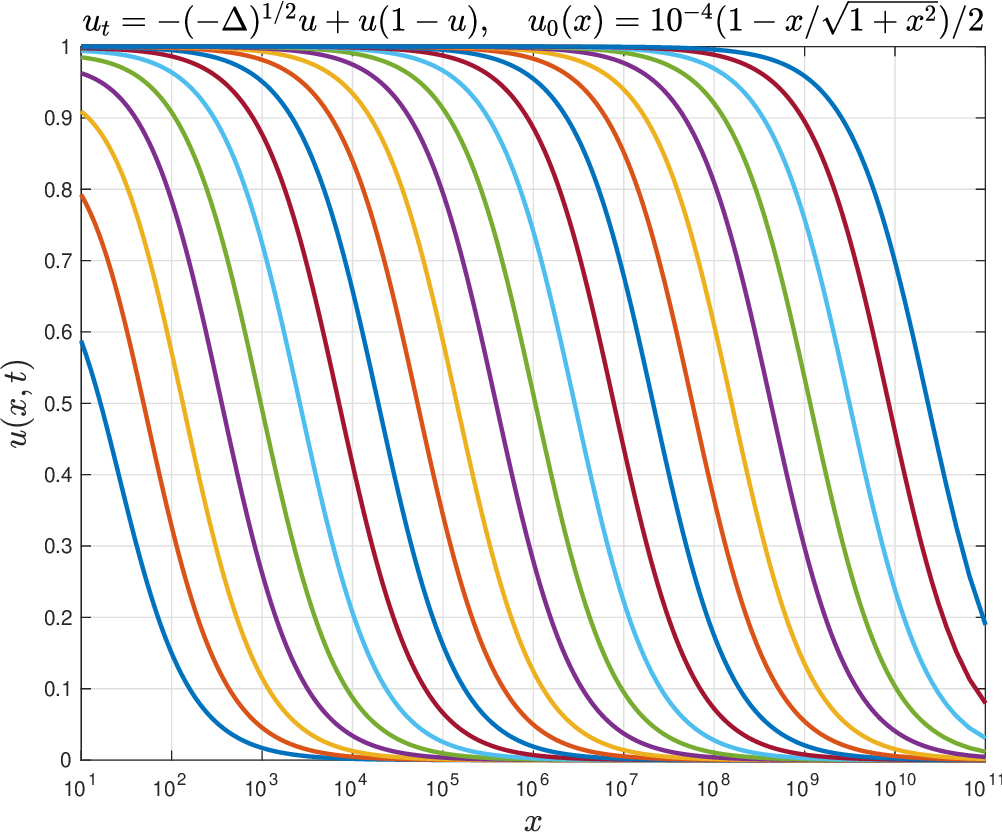}
		\caption{Left: plot of $u(x, t)$ corresponding to \eqref{e:fisher1}, at $t\in\{4, 5, \ldots, 25\}$. Right: plot of $u(x, t)$ corresponding to \eqref{e:fisher2}, at $t\in\{11, 12, \ldots, 32\}$. In both cases, the abscissa axis is in logarithmic scale.}
		\label{f:fisher}
	\end{figure}
	
	In order to measure numerically the traveling speed, we have approximated numerically, as in \cite{cayamacuestadelahoz2021}, $x_{0.5}(t)$, i.e., the value of $x$, such that $u(x, t) = 0.5$, by using a bisection algorithm together with spectral interpolation. On the left-hand side of Figure \ref{f:fisherx05}, we plot $\ln(x_{0.5}(t))$ corresponding to \eqref{e:fisher1}, at $t\in\{0.1, 0.2, \ldots, 25\}$; and on the right-hand side of Figure \ref{f:fisherx05}, we plot $\ln(x_{0.5}(t))$ corresponding to \eqref{e:fisher2}, at $t\in\{10, 10.1, \ldots, 32\}$ (note that, in this case, for smaller times, $x_{0.5}(t) < 0$). In both cases, we have a curve that tends to a straight line with slope equal to $1$, as predicted in \cite{CabreRoquejoffre2013}. In fact, the least-square regression line of the points $(t,\ln(x_{0.5}(t)))$ corresponding to \eqref{e:fisher1}, at $t\in\{15, 15.1, \ldots, 25\}$, has a slope equal to $1 - 2.1470\times10^{-6}$, and its Pearson correlation coefficient is $r = 1 - 7.4519\times10^{-10}$; and the least-square regression line of the points $(t,\ln(x_{0.5}(t)))$ corresponding to \eqref{e:fisher2}, at $t\in\{23, 23.1, \ldots, 32\}$, has a slope equal to $1 - 2.9403\times10^{-6}$, and its Pearson correlation coefficient is $r = 1 - 8.2824\times10^{-10}$. Therefore, the numerical results fully confirm the theoretical results in \cite{CabreRoquejoffre2013}, and show that an even extension of $U(s)$ at $s = \pi$ is adequate to deal efficiently with demanding problems, like the numerical simulation of \eqref{e:fisher1} and \eqref{e:fisher2}.
	
	\begin{figure}[!htbp]
		\centering
		\includegraphics[width=0.5\textwidth, clip=true]{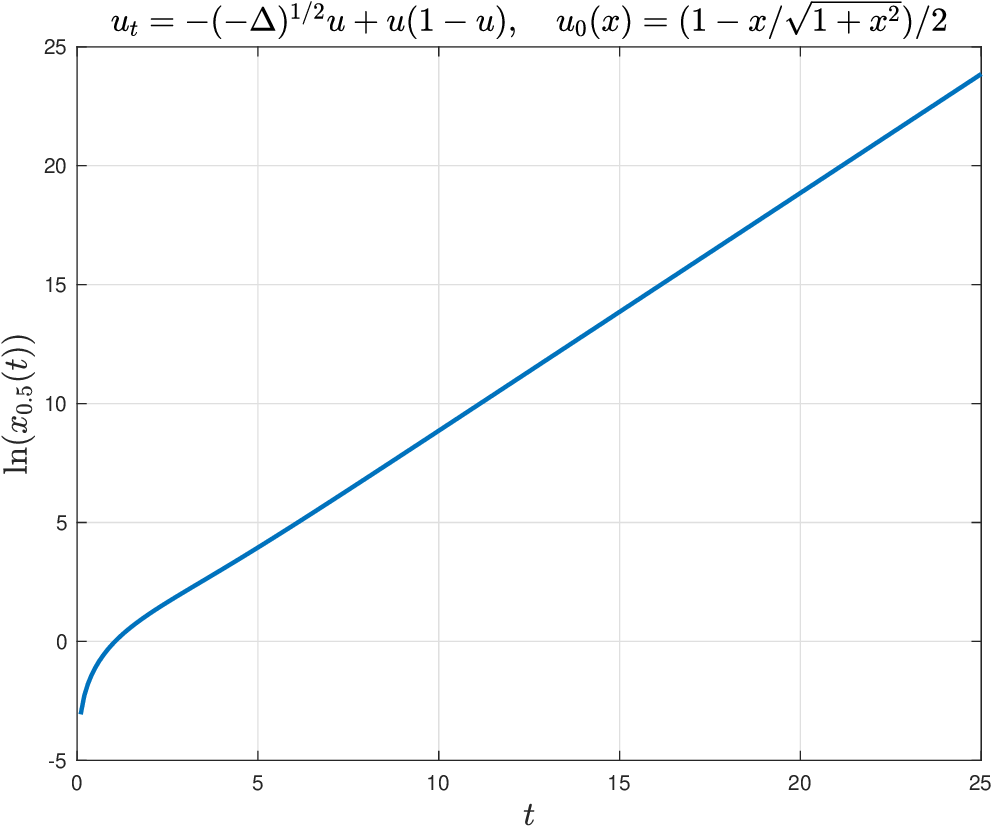}\includegraphics[width=0.5\textwidth, clip=true]{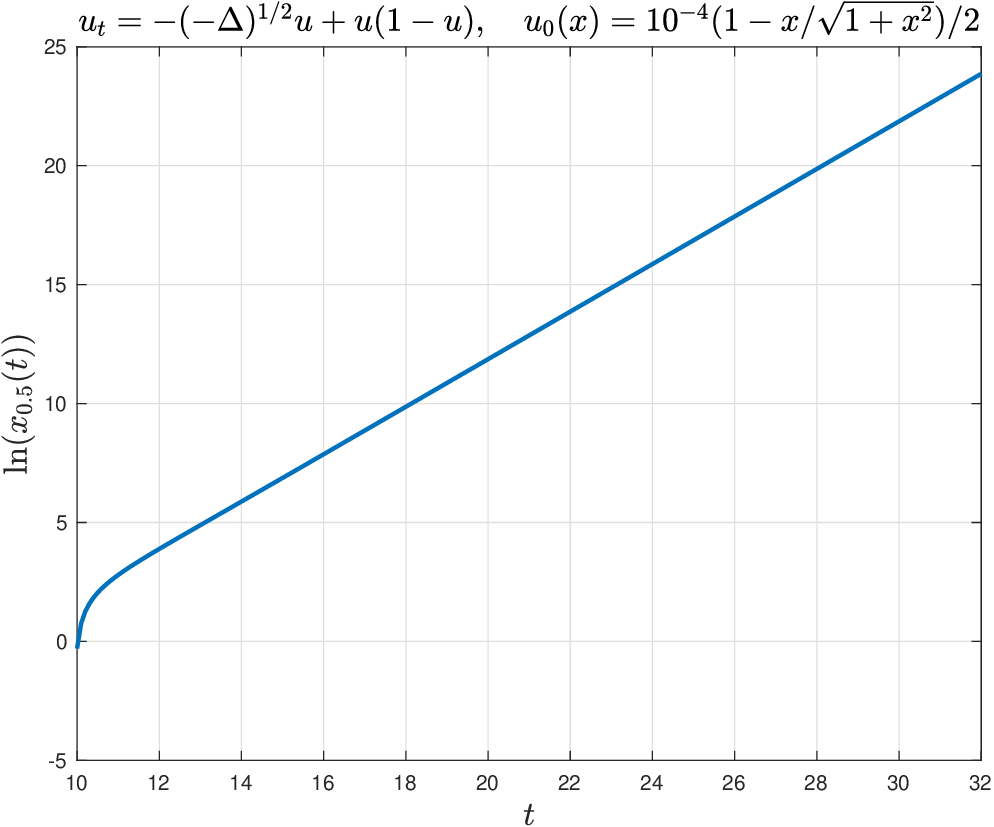}
		\caption{Left: plot of $\ln(x_{0.5}(t))$ corresponding to \eqref{e:fisher1}, at $t\in\{0.1, 0.2, \ldots, 25\}$. Right: plot of $\ln(x_{0.5}(t))$ corresponding to \eqref{e:fisher2}, at $t\in\{10, 10.1, \ldots, 32\}$.}
		\label{f:fisherx05}
	\end{figure}
	
	\section*{Acknowledgments} We want to thank the anonymous referee for his/her valuable comments, that have helped to improve greatly the quality of this paper.
	
	\appendix
	
	\section{Calculation of $(-\Delta)^{1/2}(1+x^4)^{-1}$}
	\label{s:appendix}
	The aim of this appendix is to show how to compute \eqref{e:fraclap11x4} by hand, by using complex variable techniques. Note that the same procedure can be applied in order to deduce the half Laplacian of many other definitions of $u(x)$.
	
	We use the expression for the half Laplacian given by \eqref{e:fraclaplmathematica}. Therefore, given $u(x) = (1+x^4)^{-1}$, the integrand of \eqref{e:fraclaplmathematica} is given by
	\begin{equation*}
		\frac{u_x(x-y) - u_x(x+y)}{y} = \frac{4(x+y)^3(1+(x-y)^4)^2 - 4(x-y)^3(1+(x+y)^4)^2}{y(1+(x+y)^4)^2(1+(x-y)^4)^2}.
	\end{equation*}
	This suggests defining
	$$
	g(z; x) = \frac{4(x+z)^3(1+(x-z)^4)^2 - 4(x-z)^3(1+(x+z)^4)^2}{z(1+(x+z)^4)^2(1+(x-z)^4)^2},
	$$
	and taking the integration contour $C = C_1\cup C_r\cup C_2\cup C_R$, with
	\begin{align*}
		C_1 & = \{y\ |\ y\in(-R, -r)\},
		\cr
		C_r & = \{re^{-i\theta}\ |\ \theta\in(-\pi, 0)\},
		\cr
		C_2 & = \{y\ |\ y\in(r, R)\},
		\cr
		C_R & = \{Re^{i\theta}\ |\ \theta\in(0, \pi)\},
	\end{align*}
	where $R, r > 0$. Then, choosing $R$ large enough and $r$ small enough, we have
	\begin{align*}
		\int_Cg(z; x)dz & = \int_{C_1}g(z; x)dz + \int_{C_r}g(z; x)dz + \int_{C_2}g(z; x)dz + \int_{C_R}g(z; x)dz
		\cr
		& = 2\pi i\Res [g(z; x), -x + e^{i\pi/4}] + 2\pi i\Res [g(z; x), -x + e^{i3\pi/4}]
		\cr
		& \qquad + 2\pi i\Res [g(z; x), x + e^{i\pi/4}] + 2\pi i\Res [g(z; x), x + e^{i3\pi/4}],
	\end{align*}
	and this expression is true, when we make $r\to0^+$ and $R\to\infty$.
	
	On the one hand,
	$$
	\lim_{\substack{r\to0^+\\R\to\infty}}\int_Cg(z; x)dz = 2\int_{0}^{\infty}g(y; x)dy,
	$$
	because
	\begin{align*}
		\lim_{\substack{r\to0^+\\R\to\infty}}\int_{C_1}g(z; x)dz & = \int_{-\infty}^{0}g(y; x)dy = \int_{0}^{\infty}g(y; x)dy,
		\cr
		\lim_{\substack{r\to0^+\\R\to\infty}}\int_{C_2}g(z; x)dz & = \int_{0}^{\infty}g(y; x)dy,
		\cr
		\lim_{r\to0^+}\int_{C_r}g(z; x)dz & = -\pi i\Res[g(z; x), 0] = -\pi i[g(z; x)z]\bigg|_{z=0} = 0,
		\cr
		\lim_{R\to\infty}\int_{C_R}g(z; x)dz & = iR\lim_{R\to\infty}\int_{0}^{\pi}g(Re^{i\theta}; x)e^{i\theta}d\theta = 0,
	\end{align*}
	where we have used in the first line that $g(-y, x) = g(y, x)$; and in the fourth line, that, fixed $x$, the numerator of $g(z; x)$ behaves as $\mathcal O(R^{11})$ and the denominator behaves as $\mathcal O(R^{17})$, as $R\to\infty$.
	
	On the other hand,
	\begin{align*}
		\Res [g(z; x), -x + e^{i\pi/4}] & = \frac{d}{dz}[g(z; x)(z - (-x + e^{i\pi/4}))^2]\bigg|_{z = -x + e^{i\pi/4}}
		\cr
		& = \frac{{{(x^2 +i)}}^2 {({(1+i)}\sqrt{2}x^2 +4ix+\sqrt{2}{(-1+i)})}}{8{{(1 + x^4)}}^2 },
		\cr
		\Res [g(z; x), -x + e^{3i\pi/4}] & = \frac{d}{dz}[g(z; x)(z - (-x + e^{3i\pi/4}))^2]\bigg|_{z = -x + e^{3i\pi/4}}
		\cr
		& = \frac{{{(x^2 -i)}}^2 {({(-1+i)}\sqrt{2}x^2 -4ix+\sqrt{2}{(1+i)})}}{8{{(1+x^4)}}^2 },
		\cr
		\Res [g(z; x), x + e^{i\pi/4}] & = \frac{d}{dz}[g(z; x)(z - (x + e^{i\pi/4}))^2]\bigg|_{z = x + e^{i\pi/4}}
		\cr
		& = \frac{{{(x^2 +i)}}^2 {({(1+i)}\sqrt{2}x^2 - 4xi+\sqrt{2}{(-1+i)})}}{8{{(1+x^4)}}^2 },
		\cr
		\Res [g(z; x), x + e^{3i\pi/4}] & = \frac{d}{dz}[g(z; x)(z - (x + e^{3i\pi/4}))^2]\bigg|_{z = x + e^{3i\pi/4}}
		\cr
		& = \frac{{{(x^2 -i)}}^2{({(-1+i)}\sqrt{2}x^2 +4ix+\sqrt{2}{(1+i)})}}{8{{(1+x^4)}}^2 }.
	\end{align*}
	Bearing in mind all the previous arguments,
	\begin{align*}
		(-\Delta)^{1/2}\frac{1}{1+x^4} & = \frac1\pi\int_{0}^{\infty}g(y; x)dy
		\cr
		& =  i\Res [g(z; x), -x + e^{i\pi/4}] +  i\Res [g(z; x), -x + e^{i3\pi/4}]
		\cr
		& \qquad +  i\Res [g(z; x), x + e^{i\pi/4}] +  i\Res [g(z; x), x + e^{i3\pi/4}]
		\cr
		& = \frac{(1 - x^2)(1 + 4x^2 + x^4)}{\sqrt2{{(1+x^4)}}^2},
	\end{align*}
	which concludes the proof of \eqref{e:fraclap11x4}.

\end{document}